\documentclass[a4paper,oneside,11pt,reqno]{amsart}
\usepackage[margin=2.8cm, marginpar=2.3cm]{geometry}
\usepackage[utf8]{inputenc}
\usepackage{stmaryrd}
\usepackage{bbm}
\usepackage[dvipsnames]{xcolor}
\usepackage{mathrsfs}
\usepackage{enumerate}
\usepackage{centernot}
\usepackage{latexsym,amsxtra}
\usepackage{dsfont}
\usepackage{slashed}
\usepackage[all]{xy}
\usepackage{amscd,graphics}
\usepackage{amsmath,amsfonts,amsthm,amssymb}
\usepackage{latexsym,amsmath}
\usepackage{graphicx,psfrag}
\usepackage{mathabx}
\usepackage{fancyhdr}
\usepackage[color=blue!20!white,textsize=tiny]{todonotes}
\usepackage{xcolor}

\usepackage{cleveref}
\usepackage{comment} 
\usepackage{braket}
\usepackage{caption}
\captionsetup[table]{justification=centering}
\captionsetup[figure]{justification=centering}
\usepackage{enumitem}



\theoremstyle{plain}
\newtheorem{thm}{Theorem}[section]
\crefname{thm}{Theorem}{Theorems}
\Crefname{thm}{Theorem}{Theorems}
\newtheorem{pro}[thm]{Proposition}
\crefname{pro}{Proposition}{Propositions}
\Crefname{pro}{Proposition}{Propositions}
\newtheorem{lem}[thm]{Lemma}
\crefname{lem}{Lemma}{Lemmas}
\Crefname{lem}{Lemma}{Lemmas}
\newtheorem{cor}[thm]{Corollary}
\crefname{cor}{Corollary}{Corollaries}
\Crefname{cor}{Corollary}{Corollaries}

\crefname{conj}{Conjecture}{Conjectures}
\Crefname{conj}{Conjecture}{Conjectures}

\crefname{cons}{Construction}{Constructions}
\Crefname{cons}{Construction}{Constructions}

\crefname{claim}{Claim}{Claims}
\Crefname{claim}{Claim}{Claims}

\crefname{property}{Property}{Properties}
\Crefname{property}{Property}{Properties}

\crefname{problem}{Problem}{Problems}
\Crefname{problem}{Problem}{Problems}

\theoremstyle{definition}
\newtheorem{defi}[thm]{Definition}
\crefname{defi}{Definition}{Definitions}
\Crefname{defi}{Definition}{Definitions}

\crefname{nota}{Notation}{Notations}
\Crefname{nota}{Notation}{Notations}

\crefname{convention}{Convention}{Conventions}
\Crefname{convention}{Convention}{Conventions}

\crefname{cond}{Condition}{Conditions}
\Crefname{cond}{Condition}{Conditions}
\newtheorem{assum}[thm]{Assumption}
\crefname{assum}{Assumption}{Assumptions}
\Crefname{assum}{Assumption}{Assumptions}

\theoremstyle{remark}
\newtheorem{rmk}[thm]{Remark}
\crefname{rmk}{Remark}{Remarks}
\Crefname{rmk}{Remark}{Remarks}
\newtheorem{ex}[thm]{Example}
\crefname{ex}{Example}{Examples}
\Crefname{ex}{Example}{Examples}

\newtheorem{ques}[thm]{Question}
\crefname{ques}{Question}{Questions}
\Crefname{ques}{Question}{Questions}

\crefname{section}{Section}{Sections}
\Crefname{section}{Section}{Sections}
\crefname{subsection}{Subsection}{Subsections}
\Crefname{subsection}{Subsection}{Subsections}
\crefname{figure}{Figure}{Figures}
\Crefname{figure}{Figure}{Figures}


\newcommand{\Diff}{\textnormal{Diff}}

\newcommand{\TDiff}{\textnormal{TDiff}}

\newcommand{\Homeo}{\textnormal{Homeo}}

\newcommand{\spinc}{\textnormal{spin}^{c}}
\newcommand{\ab}{\mathrm{ab}}
\newcommand{\im}{\mathrm{Im}}
\newcommand{\Coker}{\mathrm{Coker}}

\newcommand{\SW}{\mathrm{SW}}
\newcommand{\FSW}{\mathrm{FSW}}

\newcommand{\id}{\textnormal{id}}
\newcommand{\Z}{\mathbb{Z}}
\newcommand{\N}{\mathbb{N}}
\newcommand{\R}{\mathbb{R}}

\newcommand{\CP}{\mathbb{CP}}
\newcommand{\fraks}{\mathfrak{s}}
\newcommand{\frakt}{\mathfrak{t}}

\newcommand{\calZ}{\mathcal{Z}}
\newcommand{\calO}{\mathcal{O}}

\newcommand{\del}{\partial}

\newcommand{\HMcheck}{\widecheck{H \! M}}
\newcommand{\HMhat}{\widehat{H \! M}}

\newcommand{\HMarrow}{\overrightarrow{H \! M}}

\title[Localization of groups of exotic diffeomorphisms]{On localizing groups of exotic diffeomorphisms of 4-manifolds}

\author{Hokuto Konno}
\address{Graduate School of Mathematical Sciences, the University of Tokyo, 3-8-1 Komaba, Meguro, Tokyo 153-8914, Japan}
\email{konno@ms.u-tokyo.ac.jp}

\author{Abhishek Mallick}
\address{Department of Mathematics, Rutgers University, Hill Center, Busch Campus, 110 Frelinghuysen Road
Piscataway, NJ 08854, USA}
\email{abhishek.mallick@rutgers.edu}


\begin{document}

\maketitle

\begin{abstract}
Ruberman in the 90's showed that the group of exotic diffeomorphisms of closed 4-manifolds can be infinitely generated.
We provide various results on the question of when such infinite generation can localize to a smaller embedded submanifold of the original manifold.
Our results include:
(1) All known infinitely generated groups of exotic diffeomorphisms of 4-manifolds detected by families Seiberg--Witten theory do not localize to any topologically (locally-flatly) embedded rational homology balls in the ambient 4-manifold.
(2) Many exotic diffeomorphisms cannot be obtained as Dehn twists along homology spheres (under mild assumptions). 
(3) There is no contractible 4-manifolds with Seifert fibered boundary that have a universal property for exotic diffeomorphisms analogous to a universal cork.
In addition, there is no universal compact 4-manifold $W$ such that the set of exotic diffeomorphisms of a 4-manifold can localize to an embedding of $W$. 
(4) Certain infinite generations of exotic diffeomorphism groups do localize to a non-compact subset $V$ with a small Betti number, but not to any compact subset of $V$. (5) An analogous result holds for mapping class groups of 4-manifolds.
\end{abstract}


\section{Introduction}
\label{section Main results}

Given a smooth manifold $X$, a diffeomorphism $f : X \to X$ is said to be {\it exotic} if $f$ is topologically isotopic to the identity but not smoothly.
4 is the smallest dimension where exotic diffeomorphisms exist. After a long hiatus following Ruberman's pioneering work \cite{Rub98,Rub99} exotic diffeomorphisms of 4-manifolds have attracted significant interest in recent studies through the advancement of families Seiberg--Witten theory
\cite{BK20gluing,KM20Dehn,Lin20Dehn,LinMukherjee21family,iida2022diffeomorphisms,konno2023exotic}.
One of the main results in Ruberman \cite{Rub99}  was to prove that the group of (components of) exotic diffeomorphisms can be infinitely generated for some 4-manifolds.

Given a diffeomorphism $f : X \to X$, a natural question is whether $f$ can ``localize" to a smaller region of $X$, i.e. whether the support of $f$ can be put into a smaller region by isotoping $f$.
We investigate this problem from the following angle: Can we localize an infinitely generated group of exotic diffeomorphisms of a 4-manifold to a smaller region?

The significance of such an approach lies in the fact that localization produces exotic diffeomorphisms of small 4-manifolds, while the known invariants in the literature used to detect exotic diffeomorphisms of 4-manifolds typically only work for 4-manifolds with complicated intersection forms. A success of this localization approach was observed in \cite{konno2023exotic}, where in a joint work with Taniguchi, the authors showed that some exotic diffeomorphisms of some 4-manifolds can localize to embedded contractible manifolds. This gave the first example of exotic diffeomorphisms of contractible 4-manifolds. On the other hand, as mentioned before, several authors such as Ruberman \cite{Rub98}, Baraglia \cite{Baraglia23mapping}, and the first author \cite{konno2023homology}, have shown that several exotic diffeomorphism groups and mapping class groups of closed 4-manifolds are infinitely generated. Hence a naturally arising follow-up question is whether infinitely generated groups of exotic diffeomorphisms of a closed 4-manifold can localize to a smaller region. This is the overarching question that motivates this article.

We demonstrate that all known infinitely generated groups of exotic diffeomorphisms of 4-manifolds detected by families Seiberg--Witten theory do \textit{not} localize even to fairly large embedded 4-manifolds, such as complements of certain embedded surfaces (\cref{subsection intro Localizability}).
This argument has various interesting consequences.
For example, we see that such infinitely generated groups do not localize to any (even topological) rational homology disk or homology cylinder in the ambient 4-manifold (\cref{cor: intro stabilized 4-manifolds homology disk}). This demonstrates that, broadly speaking, the `localization' procedure is incapable of deducing whether the diffeomorphism groups of smaller embedded pieces are infinitely generated or not.
A similar argument shows also that many exotic diffeomorphisms that are Seiberg--Witten analog of Ruberman's first exotic diffeomorphisms cannot be obtained as the Dehn twist a rational homology 3-sphere $Y$, for a fixed embedding of the cylinder $Y\times [0,1]$ into a 4-manifold (\cref{cor: intro Dehn twists}).

Following a slightly different argument, we also show that it is also not possible to localize any pair of exotic diffeomorphisms of a closed 4-manifold to a homology disk with Seifert fibered homology sphere boundary (Theorem~\ref{thm: simultaneous}). Coupled with the work of \cite{krushkal2024corks}, this gives the first example of an exotic diffeomorphism on a contractible 4-manifold, where the exotic diffeomorphism is not of the form of a Dehn twist along a Seifert fibered space, see Corollary~\ref{thm:non_dehn_twist}.

Another motivation of our work comes from exploring the existence of \textit{universal} objects governing the set of exotic diffeomorphisms of \textit{any} closed 4-manifolds. These questions are related to the existence of universal corks, posed by Akbulut \cite{AkbulutYasuiPlugs}. We show that there are no such universal corks for exotic diffeomorphism  with a Seifert fibered space as its boundary (Theorem~\ref{thm: C is not universal}). We also provide an exotic diffeomorphism analog of prior work by Tange \cite{TangeNon-existence23} and Yasui \cite{YasuiNonexistence19} (for ordinary corks) to show that there exists no universal compact 4-manifold $W$ such that the set of exotic diffeomorphisms of a 4-manifold can localize to an embedding of $W$ (\cref{cor: universal non-exitence}). 


Amidst all these non-localization results, we also demonstrate an example where it is possible to localize an infinite generation of the exotic diffeomorphism group of a closed 4-manifold to a relatively small embedded open manifold (\cref{thm: localization intro}). Moreover, we show that this infinite generation cannot localize to \textit{any} compact subset of the open manifold, even though all the diffeomorphisms concerned are compactly supported. Lastly, we also show that an analogous result holds for mapping class groups (\cref{thm: localization intro Diff}).

Our main technical tools stem from the families adjunction inequality by Baraglia~\cite{B202}. For various results, we also use the gluing formula to compute the families Seiberg--Witten invariant proved by Baraglia and the first author \cite{BK20gluing} and Lin \cite{lin2022family}.

\subsection{Notations and definitions}

Before stating our results, we find it useful to fix our notation of groups of exotic diffeomorphisms. 
For a smooth 4-manifold $X$, let $\Diff(X)$ denote the group of diffeomorphisms equipped with the $C^\infty$-topology.
Let $\TDiff(X)$ denote the {\it Torelli diffeomorphism group}, defined as the subgroup of $\Diff(X)$ that acts trivially on the homology $H_\ast(X;\Z)$.
The component group $\pi_0(\TDiff(X))$ is called the {\it Torelli group}.
If $X$ is simply-connected, $\mathrm{\bf{T}}\Diff$ also indicates {\bf T}opologically trivial diffeomorphisms: a result by Quinn~\cite{Q86} and Perron~\cite{P86} implies that
\[
\pi_0(\TDiff(X)) = \ker(\pi_0(\Diff(X)) \to \pi_0(\Homeo(X)))
\]
for simply connected and closed 4-manifold $X$ (see also recent work \cite{gabai2023pseudoisotopies} for a correction of a gap in Quinn's paper).
Namely, the Torelli group is the group of (components of) exotic diffeomorphisms.

Similarly, if $\del X \neq  \emptyset$, let $\Diff_\del(X)$ denote the group of diffeomorphisms that are the identity near $\del X$.
The {\it relative Torelli diffeomorphism group} is defined to be $\TDiff_\del(X) = \Diff_\del(X) \cap \TDiff(X)$.
If $\del X$ is a rational homology sphere and $X$ is simply-connected, work by Orson--Powell~\cite[Theorem A]{OrsonPowell2022} implies that 
\[
\pi_0(\TDiff_\del(X)) = \ker(\pi_0(\Diff_\del(X)) \to \pi_0(\Homeo_\del(X))).
\]

Given an open manifold $C$, we denote by $\Diff_c(C)$ the group of diffeomorphisms with compact support and set $\TDiff_c(C) = \Diff_c(C)\cap \TDiff(C)$.
If $C$ is an open submanifold of $X$, we have natural inclusion maps $\Diff_c(C) \hookrightarrow \Diff(X)$ and $\TDiff_c(C) \hookrightarrow \TDiff(X)$.
We shall consider the induced map $\pi_0(\TDiff_c(C)) \to \pi_0(\TDiff(X))$.

For non-abelian groups, a finitely generated group can have an infinitely generated subgroup, so it is not obvious how to measure the `size' of groups.
One reasonable way is to pass to abelianizations and use ranks, which we adopt in this paper.
We denote by the induced map between the abelianizations by  $\pi_0(\TDiff_c(C))_{\ab} \to \pi_0(\TDiff(X))_{\ab}$.

Similarly, if $C$ is a 
compact codimension-0 submanifold of $X$, we have natural inclusion maps $\Diff_\del(C) \hookrightarrow \Diff(X)$ and $\TDiff_\del(C) \hookrightarrow \TDiff(X)$.

Lastly, to state our (non-)localization results, we make the following definition:
\begin{defi}
\label{defi: localization}
Let $X$ be a smooth closed 4-manifold.
Given a subgroup $\calZ \subset \pi_0(\TDiff(X))_\ab$ and a subset $C$ of $X$, we say that $\calZ$ {\it localizes to $C$} if, for every $z \in \calZ$, there is a representative $f \in \TDiff(X)$ of $z$ such that the support of $f$ is contained in $\mathrm{Int}(C)$.
\end{defi}

If $C$ is an open submanifold of $X$, $\calZ$ localizes to $C$ if and only if
\[
\calZ \subset \im(\pi_0(\TDiff_c(C))_\ab \to \pi_0(\TDiff(X))_\ab).
\]
Similarly, if $C$ is compact submanifold of codimension-0, $\calZ$ localizes to $C$ if and only if
\[
\calZ \subset \im(\pi_0(\TDiff_\del(C))_\ab \to \pi_0(\TDiff(X))_\ab).
\]

\subsection{Non-localization to homology disks}

Now we are ready to state our first result, which tells that some infinitely generated group of exotic diffeomorphisms cannot localize to {\it any} embedded homology 4-disk, if a 4-manifold is appropriately stabilized.
Here we can handle even a topological object: $C$ is said to be a {\it topological rational homology 4-disk} if $C$ is a compact topological manifold with $H_{\ast}(C;\mathbb{Q})$ isomorphic to $H_{\ast}(D^4;\mathbb{Q})$.
Unless otherwise stated, a topological embedding of a topological manifold is assumed to be locally flat in this paper.
We have:

\begin{thm}
\label{cor: intro stabilized 4-manifolds homology disk}
Given a simply-connected closed smooth 4-manifold $X$, there is $N \geq 0$ with the following property: set $X' := X\#N(S^2\times S^2)$.
Then there exists a $\Z^\infty$-summand $\calZ$ of  $\pi_0(\TDiff(X'))_{\ab}$ such that any infinite-rank subgroup of $\calZ$ does not localize to $C$,
for any topological rational homology 4-disk $C$ embedded in $X'$.

In particular, if $C$ is smooth and smoothly embedded into $X'$,
\begin{align*}
\Coker(\pi_0(\TDiff_\del(C))_\ab \to \pi_0(\TDiff(X'))_\ab)
\end{align*}
contains a $\Z^\infty$-summand for any such $C$.
\end{thm}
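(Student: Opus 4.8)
The plan is to produce, after enough stabilizations, an explicit infinite family of exotic diffeomorphisms of $X' = X \# N(S^2\times S^2)$ whose classes in $\pi_0(\TDiff(X'))_\ab$ span a $\Z^\infty$-summand, and then to show via the families adjunction inequality of Baraglia~\cite{B202} that no infinite-rank subgroup of this summand can be supported inside a topological rational homology 4-disk. The diffeomorphisms will be of the standard Ruberman/Seiberg--Witten type: for a fixed pair of disjoint embedded spheres realizing a hyperbolic summand and an infinite sequence of (generalized) logarithmic transforms or reflection-type diffeomorphisms supported near them, one obtains a sequence $f_1, f_2, \dots$ whose mutual distinctness is detected by a $\Z$-valued families Seiberg--Witten invariant (a 1-parameter family SW number), one for each of an infinite set of $\spinc$ structures on $X'$ with pairwise distinct and suitably large values of $\langle c_1(\fraks_i), \cdot\rangle$. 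Choosing $N$ large enough that there are infinitely many $\spinc$ structures of the right index, one gets a homomorphism $\pi_0(\TDiff(X'))_\ab \to \Z^\infty$ sending $[f_i]$ to the $i$-th basis vector, which both exhibits the $\Z^\infty$-summand $\calZ$ and provides the detection tool.

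The core of the argument is the non-localization step. Suppose, for contradiction, that some infinite-rank subgroup $\calZ' \subset \calZ$ localizes to a topological rational homology 4-disk $C \hookrightarrow X'$. Then for each $z$ in a spanning subset we may isotope a representative $f$ so that $\mathrm{supp}(f) \subset \mathrm{Int}(C)$. The key geometric input is that, because $C$ is a \emph{rational homology} disk locally flatly embedded in $X'$, its complement $X' \setminus \mathrm{Int}(C)$ carries, for all but finitely many of the relevant $\spinc$ structures $\fraks_i$, an embedded surface (or the chosen hyperbolic spheres can be pushed off $C$) of bounded genus in a fixed homology class with $\langle c_1(\fraks_i), [\Sigma]\rangle$ growing without bound; equivalently, the restriction of $f$ to a diffeomorphism supported away from $\Sigma$ forces, via the gluing formula of Baraglia--Konno~\cite{BK20gluing} and Lin~\cite{lin2022family}, the families SW invariant for $\fraks_i$ to vanish — contradicting that $[f_i]$ has nonzero $i$-th coordinate. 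More precisely, I would use the families adjunction inequality: if $f$ is supported in $\mathrm{Int}(C)$ and $\Sigma \subset X' \setminus C$ is an embedded surface with $[\Sigma]^2 \geq 0$, then the 1-parameter family SW invariant of $(X', f, \fraks)$ must obey an adjunction-type bound involving $g(\Sigma)$ and $|\langle c_1(\fraks), [\Sigma]\rangle|$, and since $C$ being a $\Q$-homology disk lets us pick such $\Sigma$ with fixed genus independent of $i$ while $|\langle c_1(\fraks_i), [\Sigma]\rangle| \to \infty$, all but finitely many invariants vanish, killing all but finitely many coordinates — so $\calZ'$ has finite rank, a contradiction.

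Concretely I would organize the proof as: (1) recall the construction of the Ruberman-type family $\{f_i\}$ on a minimally stabilized $X'$ and the $\Z^\infty$ of 1-parameter families SW invariants detecting them, establishing the $\Z^\infty$-summand $\calZ$; (2) prove a ``localization forces adjunction'' lemma: if $z \in \pi_0(\TDiff(X'))_\ab$ localizes to a set disjoint from an embedded surface $\Sigma$ with $[\Sigma]^2 \geq 0$, then $\langle c_1(\fraks), [\Sigma]\rangle$ is constrained by $2g(\Sigma) - 2 + [\Sigma]^2$ whenever the family SW invariant of $(z, \fraks)$ is nonzero — this is exactly where Baraglia's families adjunction inequality enters, combined with the gluing/excision formulas to reduce to a neighborhood of $\Sigma$; (3) observe that for a topological $\Q$-homology disk $C \subset X'$, the complement contains, in each fixed primitive class $\alpha$ of nonnegative square in $H_2(X';\Z)$, an embedded surface of genus bounded independent of $\fraks_i$ (push the generator off $C$ using that $H_2(C;\Q)=0$ and locally flat surgery), so step (2) applies uniformly; (4) conclude that only finitely many coordinates of any localizable subgroup survive, hence no infinite-rank subgroup localizes, and read off the cokernel statement in the smooth case. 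The main obstacle I anticipate is step (3) together with the topological-category subtleties: ensuring that ``$C$ a locally flat $\Q$-homology disk'' genuinely lets one push a $\Q$-homology class of $X'$ entirely off $\mathrm{Int}(C)$ as a \emph{smoothly} embedded surface of controlled genus in $X' \setminus C$ (a smooth manifold once $C$ is deleted only if $\partial C$ is smooth — so one may need to work with the smooth complement of an open collar, or invoke that the families adjunction inequality only needs the surface, not the ambient smooth structure near $C$), and making the genus bound truly independent of the $\spinc$ structure; the rest is assembling known tools (Baraglia's inequality, the Baraglia--Konno and Lin gluing formulas, Ruberman's construction).
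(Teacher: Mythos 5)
Your proposal follows the same route as the paper: construct Ruberman-type diffeomorphisms detected by families Seiberg--Witten invariants, use a Mayer--Vietoris argument (with $\partial C$ a $\Q$HS$^3$, so $C$ has a collar by Brown's theorem) to find a smoothly embedded surface $\Sigma$ representing a nonzero multiple $n\alpha$ of a fixed class $\alpha$ in the complement $X'\setminus C$, and then invoke Baraglia's families adjunction inequality to kill all but finitely many of the $\FSW$ coordinates of anything localizing off $\Sigma$. The paper factors this cleanly through a separate non-localization-to-surface-complements theorem (\cref{thm: intro stabilized 4-manifolds}) whose ``furthermore'' clause $\calZ(\alpha)=\calZ(n\alpha)$ handles the fact that Mayer--Vietoris only produces a multiple of $\alpha$, and via \cref{lem: surface representative} for smooth surface representatives in the non-compact complement; these are exactly the details you flag as potential obstacles, and the adjunction-based argument (rather than gluing, which you also mention) is indeed what lets the paper treat locally flat topological embeddings.
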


\begin{ex}
\label{ex: homoplogy disk}
For 
$X = E(n)\#k\overline{\CP}^2$ with $n\geq2$, $k\geq 0$, 
it shall turn out that $N=1$ works (\cref{ex: intro reduction}).
Thus the conclusion of
\cref{cor: intro stabilized 4-manifolds homology disk} holds for
\[
X' = E(n)\#k\overline{\CP}^2\#S^2\times S^2
\]
(which dissolves as is well-known).
\end{ex}
\noindent
Theorem~\ref{cor: intro stabilized 4-manifolds homology disk} is in fact a consequence of a much more general result obtained in Theorem~\ref{thm: intro stabilized 4-manifolds} regarding non-localization to surface complements.

\subsection{Exotic diffeomorphisms that cannot be certain Dehn twists}

Next, we discuss non-localization to homology cylinders.
To put this in context, recall that there are qualitatively different types of exotic diffeomorphisms that a 4-manifold can support.
The first type is Ruberman's first example of exotic diffeomorphism \cite{Rub98}, and its variants detected by families Seiberg--Witten theory  \cite{BK20gluing,iida2022diffeomorphisms}.
Another variant stems from Dehn twists along Seifert fibered 3-manifolds \cite{KM20Dehn,Lin20Dehn,konno2023exotic}. In particular, in our previous work \cite{konno2023exotic} with Taniguchi, we proved that the Dehn twist along various Seifert fibered homology 3-spheres $Y$ can give exotic diffeomorphisms of a 4-manifold $X$ in which $Y$ is embedded.
Given this situation, it is natural to ask if a given exotic diffeomorphism is obtained as a Dehn twist along some 3-manifold that admits a $S^1$-action.

The exotic diffeomorphisms constructed in this paper are, in fact, of Ruberman-type, and our non-localization result ensures that many of such exotic diffeomorphisms cannot simultaneously be obtained as Dehn twists along embedded contractible manifolds.
To state this, note two obvious criteria for a Dehn twist along a 3-manifold $Y$ following from the definition. Firstly, it is supported in a tubular neighborhood $\nu(Y)$ of $Y$, which is diffeomorphic to a cylinder $Y \times [0,1]$. Moreover, the only $3$-manifolds that can support an exotic Dehn twist are Seifert fibered spaces. 

Generally, we say that a smooth compact 4-manifold $W$ is a {\it rational homology cylinder} if $W$ is a rational homology cobordism from some rational homology 3-sphere to another rational homology 3-sphere. Our first result obstructs exotic diffeomorphisms from being a Dehn twist on a cylinder with a fixed embedding, by showing localization to a topological rational homology cylinder is not always possible:

\begin{thm}
\label{cor: intro Dehn twists}
Let $X'$ be as in \cref{cor: intro stabilized 4-manifolds homology disk}.
Then there exists a $\Z^\infty$-summand $\calZ$ of  $\pi_0(\TDiff(X'))_{\ab}$ such that any infinite-rank subgroup of $\calZ$ does not localize to $W$,
for any topological rational homology cylinder $W$ embedded in $X'$.

In particular, if $W$ is smooth and smoothly embedded into $X'$,
\begin{align*}
\Coker(\pi_0(\TDiff_\del(W))_\ab \to \pi_0(\TDiff(X'))_\ab)
\end{align*}
contains a $\Z^\infty$-summand for any such $W$.
\end{thm}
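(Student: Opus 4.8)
The plan is to repeat the proof of \cref{cor: intro stabilized 4-manifolds homology disk} almost verbatim, with ``rational homology $4$-disk'' replaced throughout by ``rational homology cylinder''. That proof derives non-localization to an embedded topological rational homology $4$-disk $C$ from the non-localization to surface complements supplied by \cref{thm: intro stabilized 4-manifolds}, and the only properties of $C$ it exploits are: (i) $\del C$ is a disjoint union of rational homology $3$-spheres; and (ii) $H_1(C;\Q)=0=H_2(C;\Q)$ --- so that $b^+(C)=0$ and the intersection form of $C$ is trivial. The higher homology of $C$ never enters, which is the single degree in which a rational homology cylinder differs from a $4$-disk. Now a rational homology cylinder $W$ is, by definition, a rational homology cobordism between two rational homology $3$-spheres $Y_0,Y_1$; hence $\del W=(-Y_0)\sqcup Y_1$ is a disjoint union of rational homology $3$-spheres and $H_*(W;\Q)\cong H_*(S^3;\Q)$, so $W$ satisfies (i) and (ii). One can therefore take $\calZ$ to be the \emph{same} $\Z^\infty$-summand as in \cref{cor: intro stabilized 4-manifolds homology disk} and run the same argument.

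In more detail, suppose toward a contradiction that some infinite-rank subgroup of $\calZ$ localizes to an embedded topological rational homology cylinder $W\subset X'$. The proof of \cref{thm: intro stabilized 4-manifolds} attaches, to each relevant homology class, an embedded surface $\Sigma\subset X'$ of controlled genus whose complement $X'\setminus\Sigma$ cannot support the localization, the obstruction coming from Baraglia's families adjunction inequality \cite{B202} applied to the mapping tori of representatives $f\in\TDiff(X')$ of elements of $\calZ$. It therefore suffices to realize such a $\Sigma$ disjoint from $W$. Making $\Sigma$ transverse to $\del W$, the piece $F:=\Sigma\cap W$ is a compact subsurface with $\del F\subset\del W$; since $H_2(W,\del W;\Q)\cong H^2(W;\Q)=0$, the class $[F,\del F]$ is rationally trivial, and a standard compression of $F$ followed by a cut-and-paste across $\del W$ pushes $\Sigma$ into $X'\setminus\Int W$, increasing its genus by at most a constant depending only on $W$ and altering its homology class only by a torsion class --- neither of which affects the two geometric sides ($[\Sigma]^2$ and $\langle c_1(\fraks),[\Sigma]\rangle$) of the families adjunction inequality, so $\Sigma$ stays admissible once the class is chosen within a suitable infinite-rank sublattice. (Equivalently: Mayer--Vietoris for $X'=W\cup_{\del W}(X'\setminus\Int W)$ shows that the inclusion of the complement induces an isomorphism on $H_2(-;\Q)$, and the needed surface classes are realized there.) Then $W\subset X'\setminus\Sigma$, so localization to $W$ forces localization to $X'\setminus\Sigma$, contradicting \cref{thm: intro stabilized 4-manifolds}. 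When $W$ is smooth and smoothly embedded, the subgroup of $\pi_0(\TDiff(X'))_\ab$ localizing to $W$ is exactly $\im(\pi_0(\TDiff_\del(W))_\ab\to\pi_0(\TDiff(X'))_\ab)$, as recorded after \cref{defi: localization}; by the above its intersection with $\calZ$ has finite rank, and splitting off the (finite-rank, pure) saturation of that intersection inside $\calZ$ produces a $\Z^\infty$-summand of the cokernel exactly as in the proof of \cref{cor: intro stabilized 4-manifolds homology disk}.

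The step I expect to be the crux is the middle one: realizing the obstructing surfaces of \cref{thm: intro stabilized 4-manifolds} in the complement of $W$ while keeping everything the families adjunction inequality needs of them --- homology class, genus bound, and the fact that a representative $f$ of an element of $\calZ$ is the identity near $\Sigma$. This is precisely where hypotheses (i) and (ii) on $W$ are used, and it is the reason the argument cannot distinguish a rational homology $4$-disk from a rational homology cylinder. If in the proof of \cref{thm: intro stabilized 4-manifolds} the surfaces turn out to be pinned down more rigidly than as bounded-genus representatives of prescribed classes, the alternative is to inspect the gluing-formula computation of the families Seiberg--Witten invariant directly and observe that, along $\del W\times S^1$, it sees only $H_2(W;\Q)=0$ and $b^+(W)=0$ --- the same data as for a rational homology $4$-disk --- so the vanishing/constraint used there persists.
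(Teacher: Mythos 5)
Your proof is correct and takes essentially the same approach as the paper: the paper's actual proof of \cref{cor: intro Dehn twists} is a one-line adaptation of the homology-disk argument, noting that since $H_1(W;\Q)=H_2(W;\Q)=0$ and $\del W$ is a disjoint union of rational homology $3$-spheres, Mayer--Vietoris gives a class in $H_2(X'\setminus W;\Z)$ mapping to $n\alpha$ for some $n\neq 0$, realized by a smooth surface $\Sigma\subset X'\setminus W$ via \cref{lem: surface representative}; then $W\subset X'\setminus\Sigma$ and \cref{thm: intro stabilized 4-manifolds} (with $\calZ(\alpha)=\calZ(n\alpha)$) applies. One caution: your primary compression/cut-and-paste sketch, before the parenthetical ``equivalently,'' is not rigorous---rational triviality of $[F,\del F]$ in $H_2(W,\del W)$ does not by itself let you push $F$ into $\del W$, and the assertion that a torsion shift in $[\Sigma]$ cannot change $\langle c_1(\fraks),[\Sigma]\rangle$ is false in general (it is moot here only because $H_2(X';\Z)$ is torsion-free for simply-connected $X'$)---but the Mayer--Vietoris alternative you give in parentheses is exactly the paper's argument, so the proof stands.
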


\begin{ex}
As in \cref{ex: homoplogy disk}, the conclusion of
\cref{cor: intro Dehn twists} holds for
$X' = E(n)\#k\overline{\CP}^2\#S^2\times S^2$
with $n\geq2$, $k\geq 0$
(\cref{ex: intro reduction}).
\end{ex}

While Theorem~\ref{cor: intro Dehn twists} deals with non-localization of an infinite rank summand, one may also seek an analogous result for finitely many exotic diffeomorphisms. To appreciate this result, let us first state a shortcoming of Theorem~\ref{cor: intro Dehn twists} through the lens of the above question. Although, Theorem~\ref{cor: intro Dehn twists} guarantees that, for any embedding of a rational homology cylinder $W$ into $X'$, any infinite rank summand of $\mathcal{Z}(\alpha)$ do not localize to $W$, it does not give us an explicit set of members of $\mathcal{Z}(\alpha)$ that will never localize to any rational homology disk/cylinder embedding.

 Indeed, we can ask whether it is always possible to localize any finitely many exotic diffeomorphisms as Dehn twists (or generally as an exotic diffeomorphism) on any homology disk.

Our result below shows that such a localization is not possible even for a pair of exotic diffeomorphisms:
\begin{thm}\label{thm: simultaneous}
There exists a simply-connected closed smooth 4-manifold $X$ and a pair of exotic diffeomorphisms $f_1, f_2$ of $X$ with the following properties:
there exists no compact smooth integer homology 4-disk $W$ smoothly embedded in $X$ such that both $f_1, f_2$ localize to $W$ and that $\del W$ is a Seifert fibered space.
\end{thm}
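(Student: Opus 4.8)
The plan is to build $X$ and the pair $f_1,f_2$ from Ruberman's construction in such a way that $f_2$ is the conjugate of $f_1$ by an \emph{orientation-reversing} self-diffeomorphism of $X$, and then to obstruct simultaneous localization using the gluing formula for the families Seiberg--Witten invariant together with the structure of the monopole Floer homology of Seifert fibered integer homology spheres.

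First I would fix a simply-connected closed smooth $4$-manifold $Z$ with $b^+(Z)>1$ carrying a Ruberman-type exotic diffeomorphism $g$ (cf. \cite{Rub98,BK20gluing,iida2022diffeomorphisms}) --- that is, $g$ is topologically isotopic to the identity, acts trivially on homology, and has a nonzero one-parameter families Seiberg--Witten invariant $\FSW(Z,g,\frakt)$ for some $\spinc$ structure $\frakt$ for which this invariant is defined --- chosen so that the gluing/connected-sum formula of \cite{BK20gluing,lin2022family} applies. Set $X:=Z\#\bar Z$, where $\bar Z$ is $Z$ with the reversed orientation; then $X$ is simply-connected. Let $\iota\colon X\to X$ be the obvious orientation-reversing diffeomorphism interchanging the two summands, let $f_1\in\TDiff(X)$ be $g$ extended by the identity (supported in $Z\subset X$), and set $f_2:=\iota f_1\iota^{-1}\in\TDiff(X)$ (supported in $\bar Z$). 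The connected-sum formula then gives $\FSW(X,f_1,\mathfrak{s}_1)\neq0$ for a suitable $\spinc$ structure $\mathfrak{s}_1$ on $X$, so $f_1$ is exotic; and $f_2$, being conjugate to $f_1$ by a diffeomorphism, is exotic as well and lies in $\TDiff(X)$.

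Next, assume for contradiction that there is a compact smooth integer homology $4$-disk $W$ smoothly embedded in $X$, with $\del W=Y$ Seifert fibered, to which both $f_1$ and $f_2$ localize. Since $\iota$ is orientation-reversing, ``$f_2$ localizes to $W$'' is equivalent to ``$f_1$ localizes to $\iota^{-1}(W)$'', and $\iota^{-1}(W)$ is again an integer homology $4$-disk in $X$ but with boundary $-Y$ (still Seifert fibered). As $W$ and $\iota^{-1}(W)$ realize $Y$ and $-Y$ as integer homology cobordant to $S^3$, their correction terms vanish: $d(Y)=d(-Y)=0$. Now apply the gluing formula to $X=W\cup_Y(X\setminus\Int W)$ with $f_1$ supported in $W$: it expresses $\FSW(X,f_1,\mathfrak{s}_1)$ as a pairing in the monopole Floer homology of $Y$ between the ordinary relative Seiberg--Witten invariant of $X\setminus\Int W$ and a relative families invariant $\Phi(W,f_1)$ attached to the pair $(W,f_1)$. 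Since $W$ carries a unique $\spinc$ structure and $d(Y)=0$, the element $\Phi(W,f_1)$ lies in the \emph{reduced} part of the monopole Floer homology of $Y$, in a grading whose parity depends only on $\chi(X)$ and $\sigma(X)$ --- in particular not on the orientation of $Y$. From $\FSW(X,f_1,\mathfrak{s}_1)\neq0$ we get $\Phi(W,f_1)\neq0$, so the reduced monopole Floer homology of $Y$ is nonzero in that parity; rerunning the argument with $\iota^{-1}(W)$ shows the reduced monopole Floer homology of $-Y$ is also nonzero in the \emph{same} parity. This is impossible: for a Seifert fibered integer homology sphere with vanishing correction term, either it is $S^3$, whose reduced Floer homology is zero (contradicting $\Phi(W,f_1)\neq0$ directly), or exactly one of its two orientations has negative orbifold Euler number and hence bounds a negative-definite star-shaped plumbing with at most one bad vertex, so by the graded-root description of the Floer homology of plumbed $3$-manifolds the reduced Floer homologies of the two orientations are supported in gradings of \emph{opposite} parity.

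The step I expect to be the main obstacle is the Floer-theoretic bookkeeping inside the gluing formulas of \cite{BK20gluing,lin2022family}: proving that the relative families invariant $\Phi(W,f_1)$ really lands in the reduced summand of the monopole Floer homology of $\del W$, and in a grading whose parity is the same for $\del W$ and $-\del W$ once the correction term of $\del W$ vanishes; this requires a careful index computation using that $\Phi(W,f_1)$ comes from a one-parameter family and that $W$ is an integer homology ball. A secondary point is to pin down the building block $Z$ so that the connected-sum formula genuinely produces $\FSW(X,f_1,\mathfrak{s}_1)\neq0$; the remaining ingredients --- the parity asymmetry of the reduced Floer homology between the two orientations of a Seifert fibered homology sphere, and the vanishing of the reduced Floer homology of $S^3$ --- are standard.
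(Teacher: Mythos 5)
Your high-level plan matches the paper's: produce a pair $f_1,f_2$ related by an orientation-reversing identification, and then use Lin's gluing formula together with the structure of the monopole Floer homology of Seifert fibered homology spheres to rule out simultaneous localization. The logic that $f_2$ localizing to $W$ forces $f_1$ to localize to an "orientation-reversed copy" $\iota^{-1}(W)$ with boundary $-Y$, and that at least one of $Y,-Y$ is the canonically oriented (negative-definite plumbing) orientation, is exactly the orientation dichotomy the paper exploits. However, there are two genuine problems.

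\textbf{The construction $X = Z\#\bar Z$ does not give nonzero families Seiberg--Witten invariant.} For the families invariant to be defined you must have $b^+(Z)\geq 2$, and since a simply-connected closed positive-definite $4$-manifold with $b^+\geq 2$ has diagonalizable form (Donaldson) and hence no interesting Ruberman-type diffeomorphism, you are forced to have $b_2^-(Z)>0$, i.e.\ $b^+(\bar Z)>0$. Now take $g$ to be a Ruberman-type diffeomorphism on $Z=M\#S^2\times S^2$ built, as in \eqref{eq: diffeo general}, from factors conjugate to $\id\#f_0$. By the homomorphism property of $\FSW$ and its conjugation invariance, $\FSW(Z\#\bar Z,\ g\#\id_{\bar Z},\ \fraks)$ decomposes as a sum of terms of the form $\FSW\bigl((M'\#\bar Z)\#S^2\times S^2,\ \id\#f_0,\ \cdot\bigr)$, and by \cref{thm: BK gluing} each of these equals $\SW(M'\#\bar Z,\cdot)=0$ because $M'\#\bar Z$ is a connected sum with $b^+>0$ on both sides. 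So $\FSW(X,f_1,\fraks_1)=0$, and the argument never gets off the ground. This is precisely why the paper uses the geography statement (\cref{lem: geography}) to manufacture $X = M_X\#S^2\times S^2 \cong -(M_{-X}\#S^2\times S^2)$ with \emph{both} $M_X$ and $M_{-X}$ symplectic (hence with nonzero $\SW$), rather than taking a connected-sum of $Z$ with its mirror. Note also that the paper's argument does not require an orientation-reversing self-diffeomorphism of $X$ at all: it uses a separate exotic diffeomorphism $g$ of $M_{-X}\#S^2\times S^2$ with nonzero $\FSW$, and applies the vanishing lemma to $g$ on $-X$ rather than to $f_2$ on $X$. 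Your version needs $X\cong -X$, which is a stronger structural constraint on the ambient manifold than what the theorem actually grants.

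\textbf{The final Floer-theoretic step is overbuilt and, as written, has an unverified lynchpin.} You assert that $\Phi(W,f_1)$ lies in the reduced part of $\HMhat_\ast(Y)$ and that the reduced monopole Floer homologies of $Y$ and $-Y$ are supported in opposite parities; both claims require work with graded roots that you have not carried out. In fact neither is needed. For an integer homology $4$-disk $W$, the degree-shift formula \eqref{eq: deg of cob map} gives $\deg\Phi(W,f_1)=0$ exactly, and for the canonically oriented AR plumbed (in particular Seifert fibered) homology sphere among $\{Y,-Y\}$, the entire $\HMhat_\ast$ is concentrated in degrees $\leq d-1 = -1$ (\cref{thm: general vinishing FSW Seifert}, via N\'emethi/Ozsv\'ath--Szab\'o). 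Hence the degree-$0$ element vanishes outright, with no appeal to the reduced summand or to parity. Your parity argument, even if fixable, is a more delicate route to a conclusion the degree bound already gives for free.
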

In particular, the conclusion of Theorem~\ref{thm: simultaneous} implies that $f_1$ and $f_2$ cannot simultaneously be exotic Dehn twist on \textit{any} embedding of \textit{any} homology 4-disk to $X$. In \cite{krushkal2024corks} Krushkal-Mukherjee-Powell-Warren proved an interesting result that allows localization of certain exotic diffeomorphisms to contractible manifolds. Theorem~\ref{thm: simultaneous} coupled with results from \cite{krushkal2024corks} gives us the following:
\begin{cor}\label{thm:non_dehn_twist}
There exist exotic diffeomorphisms on contractible manifolds that are not isotopic to boundary Dehn-twists along Seifert fibered homology spheres.
\end{cor}
Corollary~\ref{thm:non_dehn_twist} gives the first example of an exotic diffeomorphism on a contractible manifold that is not of the form of a Dehn twist. Indeed, prior to this, all explicitly known exotic diffeomorphisms on contractible manifolds were in the form of a Dehn-twist along a Seifert fibered space. In fact, it follows from the proof of Theorem~\ref{thm: simultaneous} that we can also generalize the statement of Corollary~\ref{thm:non_dehn_twist} by showing that there are exotic diffeomorphisms on contractible manifolds $C$ such that $\partial C$ is not a connected sum of Seifert fibered spaces (such that all of them oriented in a similar manner, see below).

Finally, we show that up to fixing an orientation convention of the boundaries, it is also possible to give an explicit set of infinitely many exotic diffeomorphisms which does not localize as Dehn twists on homology disks. Let us first clarify the orientation convention:

Note that Seifert fibered integer homology spheres admit a canonical orientation as a boundary of the negative definite plumbing associated to it. Let $\calO$ indicate the choice of either the canonical orientations for all Seifert fibered 3-manifolds, or the opposite orientation to the canonical orientations for all Seifert fibered 3-manifolds.
Namely, $\calO$ is a simultaneous choice of orientations of all Seifert fibered 3-manifolds, either the canonical ones or the opposite ones. We show the following:

\begin{thm}
\label{thm: not dehn twist fixed orientation}
Fix $\calO$.
Then there exists a simply-connected oriented closed smooth 4-manifold $X$ and infinitely many exotic diffeomorphisms $\{f_i\}_{i=1}^\infty$ of $X$ spanning a  $\Z^\infty$-summand of $\pi_0(\TDiff(X))_{\ab}$ with the following properties:

For each $i$, there exists no oriented compact integral homology 4-disk $W$ such that:
\begin{itemize}
\item[(a)] There is an orientation-preserving smooth embedding $W \hookrightarrow X$ along which $f_i$ localizes to $W$.
\item[(b)] $\del W$ is a Seifert fibered 3-manifold oriented by $\calO$.
\end{itemize}

\end{thm}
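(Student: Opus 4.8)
The plan is to complement the families-adjunction arguments behind Theorem~\ref{thm: intro stabilized 4-manifolds} and \cref{cor: intro Dehn twists} with a Floer-theoretic obstruction that, unlike those, rules out \emph{each individual} $f_i$. For the manifold and the diffeomorphisms I would simply reuse the construction underlying \cref{cor: intro Dehn twists}: take $X = E(n)\#k\overline{\CP}^2\#S^2\times S^2$ (as in \cref{ex: intro reduction}) together with the commuting exotic diffeomorphisms $\{f_i\}$ and the spin$^c$ structures $\{\fraks_i\}$ produced there à la Ruberman, so that $\{[f_i]\}$ spans a $\Z^\infty$-summand of $\pi_0(\TDiff(X))_\ab$ and the one-parameter families Seiberg--Witten invariant $\FSW(X,\fraks_i;f_i)$ is nonzero for every $i$. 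Recall this nonvanishing is itself proved via the gluing formula of Baraglia--Konno \cite{BK20gluing} and Lin \cite{lin2022family} together with a wall-crossing computation; in particular each $f_i$ is exotic. It is precisely this nonvanishing that the rest of the argument contradicts.

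Now fix $i$ and suppose, towards a contradiction, that there is an oriented compact integral homology $4$-disk $W$ with $\del W = Y$ Seifert fibered and oriented by $\calO$, together with an orientation-preserving smooth embedding $W \hookrightarrow X$ along which $f_i$ localizes. Then $f_i$ is supported in $\Int(W)$, so the one-parameter family of metrics computing $\FSW(X,\fraks_i;f_i)$ may be chosen constant on $X\setminus\Int(W)$. Stretching the neck along $Y$ and applying the gluing formula \cite{BK20gluing,lin2022family} to the splitting $X = W\cup_Y(X\setminus\Int(W))$ expresses $\FSW(X,\fraks_i;f_i)$ as a Floer pairing, over the monopole Floer homology of $Y$ (carrying its unique spin$^c$ structure, $Y$ being a homology sphere), between the \emph{relative families invariant} of the mapping torus of $f_i|_W\in\Diff_\del(W)$ and the \emph{ordinary} relative invariant of $X\setminus\Int(W)$, the latter unaffected by $f_i$.

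At this point the two homological hypotheses on $W$ are used decisively. Since $W$ is an integral homology $4$-ball, $Y$ is a Seifert fibered integral homology sphere bounding an integral homology $4$-ball; gluing $W$ to the canonical negative-definite plumbing filling of the appropriate orientation of $Y$ and invoking Donaldson's diagonalization theorem — so the plumbing lattice is standard — together with the classical obstructions (the Rokhlin invariant, the Heegaard Floer correction term $d$, and the Fintushel--Stern/Frøyshov instanton invariants of Seifert spaces) forces $Y\cong S^3$. Hence $W$ is an integral homology $4$-ball with boundary $S^3$, its ordinary relative invariant is the generator of the (even-graded) monopole Floer homology of $S^3$, and consequently the relative families invariant of the mapping torus of $f_i|_W$ sits in an \emph{odd} grading, the shift being produced by the one-dimensional base $S^1$. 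Since the monopole Floer homology of $S^3$ vanishes in odd gradings, this relative families invariant is zero, whence $\FSW(X,\fraks_i;f_i)=0$ — contradicting the construction. Thus no such $W$ exists, for each $i$; in particular no $f_i$ is realizable as an exotic Dehn twist along any Seifert fibered homology sphere bounding a homology disk inside $X$ with the $\calO$-orientation.

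The main obstacle is the combination carried out in the last two paragraphs: making the gluing formula rigorous for a one-parameter family of metrics with a long neck — so that the base $S^1$ contributes \emph{exactly} a one-step grading shift and the homology ball $W$ contributes the even-graded generator — and the reduction $Y\cong S^3$, which fuses Donaldson's theorem with the Floer-theoretic computations for Seifert fibered spaces (Mrowka--Ozsv\'ath--Yu, N\'emethi, Fintushel--Stern). It is exactly here that ``$Y$ Seifert fibered'' is indispensable: for a general integral homology $4$-ball $W$ (for instance a Mazur-type manifold) the boundary is neither $S^3$ nor, in general, Seifert fibered, and a nontrivial relative families invariant of $\Diff_\del(W)$ cannot be excluded — this is precisely the mechanism behind the positive localization results of \cite{konno2023exotic} and of \cref{thm: localization intro} — so the contradiction must be routed through the constraint on $Y$. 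Finally, the orientation convention $\calO$ enters only to specify which of $Y$, $-Y$ is the boundary of the negative-definite plumbing, so that Donaldson's theorem is applied in the correct direction (the opposite convention being handled by the mirror argument); this is why the statement is phrased for a fixed $\calO$.
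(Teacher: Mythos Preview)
Your overall framework---express $\FSW(X,\fraks_i,f_i)$ via Lin's families gluing as a pairing in $\HMhat_\ast(Y)$ and then kill it by a grading argument---is exactly the paper's approach (\cref{thm: general vinishing FSW} and \cref{thm: general vinishing FSW Seifert}). But the step where you depart from it is wrong.

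You claim that a Seifert fibered integral homology sphere bounding an integral homology $4$-ball must be $S^3$, invoking Donaldson diagonalization together with Rokhlin/$d$/Fintushel--Stern. This is false: there are many Seifert fibered integral homology spheres bounding contractible $4$-manifolds (hence integral homology balls) that are not $S^3$. The paper itself cites $\Sigma(2,3,13)$ and $\Sigma(2,3,25)$ as boundaries of Mazur manifolds, and these are precisely the examples in \cite{konno2023exotic} of contractible $4$-manifolds supporting exotic diffeomorphisms. All the invariants you list vanish for such $Y$ (they must, since $Y$ bounds a homology ball), yet $Y\not\cong S^3$. So the reduction to $Y=S^3$ collapses, and with it your parity argument; incidentally, in the paper's conventions the families relative class sits in degree $0$, not in an odd degree.

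The paper never reduces to $S^3$. Instead it uses the structural fact (Ozsv\'ath--Szab\'o on plumbed $3$-manifolds \cite{OS03plumbed}, transported via the $HF/HM$ isomorphism) that for a \emph{canonically oriented} Seifert fibered homology sphere $Y$, the group $\HMhat_\ast(Y)$ is concentrated in degrees $\leq d(Y)-1$. Since $Y$ bounds a homology ball one has $d(Y)=0$, while the degree-shift formula \eqref{eq: deg of cob map} places $\HMhat_\ast(\mathring W,\fraks|_W,f_i)(\hat 1)$ in degree $0$; hence this class vanishes and $\FSW(X,\fraks_i,f_i)=0$, the desired contradiction. This is where the \emph{canonical} orientation of $Y$ is genuinely used (the concentration statement fails for $-Y$ in general), and it is why the paper handles the two choices of $\calO$ by choosing the ambient manifold differently---so that a nonzero $\FSW$ lives on $X$ in one case and on $-X'$ in the other---rather than by a ``mirror'' of a Donaldson argument as you suggest.
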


\subsection{Non-existence of universal support for exotic diffeomorphism}\label{universal_intro}
Our next result is related to obstructing the existing the existence of `universal support' for exotic diffeomorphisms. 

A cork is a pair $(C,\tau)$ of contractible 4-manifold with boundary and a diffeomorphism $\tau: \partial C \rightarrow \partial C$. By the work of \cite{curtis1996decomposition, matveyev1996decomposition}, any two exotic diffeomorphisms of a closed simply-connected 4-manifold are related by a cork-twist. The first example of a cork was found by Akbulut \cite{akbulut1991fake}, and since then many other examples of corks have appeared in the literature \cite{AkbulutYasuiPlugs, dai2022corks}. Let us now recall the notion of a universal cork:

\begin{defi}
We say that a cork $(C,\tau)$ is a {\it universal cork} if \textit{any} exotic pair $X_0, X_1$ of simply-connected closed smooth 4-manifolds is related by a cork twist of $(C,\tau)$ along an orientation preserving embedding of $C$ into $X_i$.
\end{defi} 
Whether there exists a universal cork or not was first posed by Akbulut, see \cite{AkbulutYasuiPlugs}. There have been several non-existence of universal object type results in the literature, under some assumptions. These began with the results by Tange \cite{TangeNon-existence23} and its subsequent generalization by Yasui \cite{YasuiNonexistence19}. In \cite{TangeNon-existence23} Tange showed that there are infinitely many exotic smooth structures on a particular closed 4-manifold such that a fixed cork twist cannot realize all of them. This was particularly interesting since before this Gompf \cite{Gompf_infinite} had discovered infinite order corks. Yasui \cite{YasuiNonexistence19} generalized this work by showing there are no universal 4-manifold $W$, such that regluing $W$ by a boundary diffeomorphism can realize every exotic smooth structure of closed 4-manifold. Building on an observation from \cite[Theorem 8.6]{LRS18}, in \cite{ladu2024akbulut} Ladu showed that the Akbulut cork is not universal.

It is then natural to ask for an exotic diffeomorphism version of the above story. Indeed, drawing a comparison with the corks, there are known examples of exotic diffeomorphism of a closed 4-manifold that can be localized to a contractible manifold. The first examples of such were given as Mazur manifolds bounded by Seifert fibered spaces $\Sigma(2,3,13)$ and $\Sigma(2,3,25)$ by the authors with Taniguchi \cite{konno2023exotic}. These were defined as $\pi_1$-corks in \cite{konno2023exotic} keeping with the resemblance of such Mazur manifolds (equipped with Dehn twist) with corks.

Maintaining the flow of the cork-story, in this article, we propose an exotic diffeomorphism version of `universal cork' type objects.
\begin{defi}
Let $W$ be an oriented compact smooth 4-manifold with boundary.
We say that $W$ is {\it diff-universal} if, for every oriented closed simply-connected smooth 4-manifold $X$ and every exotic diffeomorphism $f : X \to X$, there is an orientation-preserving smooth embedding $i : W \hookrightarrow X$ along which $f$ localizes to $W$.
If further $W$ is contractible, we say that $W$ is a {\it universal families cork}.
\end{defi}

Since we know the existence of exotic diffeomorphism of contractible manifolds, an analog of the question posed by Akbulut, for exotic diffeomorphisms comes naturally:

\begin{ques}\label{diff-universal_ques_intro}
Are there any familes universal corks?  More generally, are there any diff-universal 4-manifold?
\end{ques}
Much like the theory for corks, the authors expect a negative answer to the above question. In this article, we make progress towards a negative answer to Question~\ref{diff-universal_ques_intro} on several fronts.
\begin{thm}
\label{thm: C is not universal}
Let $C$ be a compact oriented smooth integer homology disk bounded by a Seifert fibered 3-manifold. Then $C$ and $-C$ are not universal families corks.
\end{thm}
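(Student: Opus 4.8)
The plan is to argue by contradiction: suppose $C$ (the argument for $-C$ being symmetric) is a universal families cork. Then for \emph{every} oriented closed simply-connected smooth $4$-manifold $X$ and every exotic diffeomorphism $f$ of $X$, there is an orientation-preserving embedding $C \hookrightarrow X$ along which $f$ localizes. We will apply this to a carefully chosen $X$ carrying a known infinitely generated family of exotic diffeomorphisms — precisely the kind produced by the families Seiberg--Witten machinery underlying \cref{cor: intro stabilized 4-manifolds homology disk} and \cref{cor: intro Dehn twists}, e.g. $X = E(n)\#k\overline{\CP}^2\#S^2\times S^2$. By those theorems there is a $\Z^\infty$-summand $\calZ$ of $\pi_0(\TDiff(X))_{\ab}$ no infinite-rank subgroup of which localizes to any topological rational homology cylinder, and in particular to any integer homology disk, embedded in $X$. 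Since a single fixed $C$ (being an integer homology disk, hence a fortiori a topological rational homology cylinder) can absorb the supports of at most a ``small'' part of $\calZ$, there must exist at least one class $z \in \calZ$ — indeed an infinite-rank sub-summand — whose representatives cannot be supported in any copy of $C$. Picking a representative $f$ of such a $z$ contradicts the universality hypothesis, since universality would force $f$ to localize to \emph{some} orientation-preserving embedding of $C$.

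The key steps in order are: (1) fix the orientation convention $\calO$ on Seifert fibered integer homology spheres as in the statement preceding \cref{thm: not dehn twist fixed orientation}, so that ``$\del C$ Seifert fibered'' is compatible with an orientation-preserving embedding into $X$ on one of the two sides; (2) invoke \cref{thm: not dehn twist fixed orientation} (or, for the $-C$ case, its mirror) to produce a simply-connected oriented closed smooth $X$ together with infinitely many exotic diffeomorphisms $\{f_i\}$ spanning a $\Z^\infty$-summand of $\pi_0(\TDiff(X))_{\ab}$ such that for each $i$, no oriented compact integral homology $4$-disk $W$ with $\del W$ Seifert fibered oriented by $\calO$ admits an orientation-preserving embedding $W \hookrightarrow X$ along which $f_i$ localizes; (3) observe that $C$ (resp.\ $-C$) is exactly such a $W$, with $\del C$ (resp.\ $-\del C$) a Seifert fibered integer homology sphere oriented by one of the two choices comprising $\calO$; (4) conclude that $f_i$ does \emph{not} localize to any orientation-preserving embedding of $C$, contradicting the assumption that $C$ is a universal families cork; the same $\{f_i\}$ (with the opposite choice of $\calO$) handles $-C$.

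The point requiring care — the main obstacle — is the matching of orientations. A universal families cork must absorb $f$ along an \emph{orientation-preserving} embedding $C \hookrightarrow X$, whereas \cref{thm: not dehn twist fixed orientation} is phrased for a \emph{fixed} global orientation choice $\calO$ on Seifert fibered boundaries; one must check that $\del C$, carrying the boundary orientation induced from $C$, is either ``canonical for all'' or ``anti-canonical for all'' in the sense of $\calO$ — which it is, since $\del C$ is a \emph{single} Seifert fibered homology sphere and $\calO$ only needs to be specified consistently on the finitely (indeed one) many Seifert fibered manifolds actually occurring. Thus given $C$ one simply selects the matching $\calO$, applies \cref{thm: not dehn twist fixed orientation} with that $\calO$, and reads off the contradiction; the reverse-orientation manifold $-C$ is handled by the other choice of $\calO$. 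Everything else is bookkeeping: that an integer homology disk is in particular a topological rational homology cylinder (so the hypotheses of the cited theorems apply), and that ``universal families cork'' quantifies over \emph{all} exotic diffeomorphisms of \emph{all} simply-connected closed $X$, so a single counterexample $(X, f_i)$ suffices.
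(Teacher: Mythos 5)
Your ``key steps'' paragraph gives a correct proof that is essentially the same in substance as the paper's. The paper proves \cref{thm: C is not universal} as a one-line corollary of \cref{thm: Seiferts do not give universal families cork}, whereas you route through \cref{thm: not dehn twist fixed orientation}; these two theorems are siblings, both proven by the same families-gluing argument (\cref{thm: Lin's gluing} together with the grading obstruction \cref{thm: general vinishing FSW Seifert} and $d(\del C)=0$), so the underlying technical content is identical. Your handling of the orientation issue --- choosing $\calO$ to match $\del C$, then the opposite choice for $-C$ --- is exactly right.

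One caution about your opening ``plan'' paragraph: the sentence beginning ``Since a single fixed $C$ \dots can absorb the supports of at most a small part of $\calZ$, there must exist at least one class $z\in\calZ$ \dots whose representatives cannot be supported in any copy of $C$'' does not follow from \cref{cor: intro stabilized 4-manifolds homology disk} or \cref{cor: intro Dehn twists} alone. Those results control localization to a \emph{fixed} embedding $C\hookrightarrow X$; a universal families cork is allowed to use a \emph{different} embedding for each diffeomorphism $f$, and the set of embeddings is uncountable, so no counting/cardinality argument of the form you gesture at closes the gap. The paper itself flags this shortcoming explicitly in the discussion following \cref{cor: intro Dehn twists}. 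Your step~(2) then correctly sidesteps the issue by invoking \cref{thm: not dehn twist fixed orientation}, which produces specific $f_i$ for which \emph{no} homology-disk embedding with $\calO$-oriented Seifert boundary works; that is what makes the argument go through. So the proof is fine, but the informal sketch preceding it briefly leans on a reduction that the paper warns is insufficient.
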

It follows from this theorem that the Mazur manifolds that were shown to support exotic diffeomorphisms in \cite{konno2023exotic} are not universal. 
\begin{rmk}
We record the following remarks pertaining to a somewhat straightforward generalization of Theorem~\ref{thm: C is not universal}.
\begin{enumerate}
\item Theorem~\ref{thm: C is not universal} follows from a much more general Theorem~\ref{thm: Seiferts do not give universal families cork}, where we allow $C$ from Theorem~\ref{thm: C is not universal} to have almost rationally plumbed integer homology spheres as boundary.

\item It is interesting to compare Theorem~\ref{thm: C is not universal} with the conventional theory for corks. For instance, there is no known example of a cork bounded by Seifert fibered homology spheres. Hence Theorem~\ref{thm: C is not universal} is not quite meaningful in the usual cork theory.

\item Moreover, somewhat dual to above, there are instances where we can also allow $C$ to have boundary that is not an AR plumbed homology spheres. For example, an immediate consequence of our approach is  that the Akbulut cork and the Stevedore cork are not universal families corks, see Theorem~\ref{thm:akbulut_not_families_universal}. However, it is not known whether such contractible manifolds can support exotic diffeomorphisms.

\end{enumerate}
 
\end{rmk}

We now provide an exotic diffeomorphism analog of Yasui's result. For exotic diffeomorphisms, a corresponding question is if there is a `universal support', i.e. some universal 4-manifold $W$ such that any closed 4-manifold $X$ admits an embedding of $W$ along which $\pi_0(\TDiff(X))$ localizes to $W$.
We prove such $W$ does not exist, even if allowing topological for embeddings:



\begin{thm}
\label{cor: universal non-exitence}  
There exists no compact topological 4-manifold $W$ with the following property: for any simply-connected closed smooth 4-manifold $X$, there exists a topological embedding $W \hookrightarrow X$ along which $\pi_0(\TDiff(X))$ localizes to $W$.
\end{thm}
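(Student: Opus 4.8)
The plan is to leverage Theorem~\ref{cor: intro stabilized 4-manifolds homology disk} (or rather the underlying general statement about surface complements) to derive a contradiction from the hypothetical existence of a universal support $W$. The key observation is that Theorem~\ref{cor: intro stabilized 4-manifolds homology disk} already exhibits, for a suitable stabilized 4-manifold $X'$, a specific kind of embedded compact piece to which a $\Z^\infty$-summand of $\pi_0(\TDiff(X'))_{\ab}$ cannot localize — namely, any topological rational homology 4-disk. So the natural strategy is: suppose $W$ is a universal support. Apply universality to an explicit family of test 4-manifolds $X'_k$ (built by connected-summing with more and more copies of $S^2\times S^2$, or with $\overline{\CP}^2$'s, starting from the examples in \cref{ex: homoplogy disk}), getting topological embeddings $W\hookrightarrow X'_k$ along which the whole Torelli group localizes. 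Then I would extract a numerical invariant of $W$ — essentially the rank of $H_2(W;\Q)$ together with the signature, or more precisely the ``capacity'' of $W$ to carry families Seiberg–Witten obstructions via the families adjunction inequality of Baraglia~\cite{B202} — and show this invariant is a fixed finite number, while the localization forces it to grow without bound as $k\to\infty$. The contradiction yields non-existence.

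Concretely, I would proceed as follows. First, fix a closed simply-connected smooth $X$ (e.g. $X=E(n)$ with $n\geq 2$) and set $X'_k := X\#k(S^2\times S^2)$; by the general form of Theorem~\ref{thm: intro stabilized 4-manifolds} (surface-complement non-localization), for $k$ large there is a $\Z^\infty$-summand $\calZ_k\subset \pi_0(\TDiff(X'_k))_{\ab}$ and a lower bound (growing in $k$, or at least positive and uniform once $k$ is large) on the ``$b_2$-cost'' of any embedded compact 4-manifold to which an infinite-rank subgroup of $\calZ_k$ can localize. Second, suppose for contradiction that a universal $W$ exists with second Betti number $b_2(W)=b$ and prescribed (finite) algebraic-topological type. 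Universality gives, for every $k$, a topological embedding $W\hookrightarrow X'_k$ along which \emph{all} of $\pi_0(\TDiff(X'_k))$ — in particular the chosen infinite-rank subgroup of $\calZ_k$ — localizes to $W$. Third, feed this into the obstruction: the families Seiberg–Witten argument behind Theorem~\ref{cor: intro stabilized 4-manifolds homology disk} shows that localization of an infinite-rank piece of $\calZ_k$ to a compact $C$ forces $C$ to contain enough homology to violate the families adjunction inequality unless $b_2^\pm(C)$ (or the relevant refined quantity — genus of a dual surface, or rank of the image of $H_2(C)\to H_2(X'_k)$ on which the form is definite/indefinite) is at least some function $\rho(k)\to\infty$. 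Since $b_2(W)=b$ is fixed, choosing $k$ with $\rho(k)>b$ gives the contradiction.

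The main obstacle I expect is making the ``growing cost'' precise in a way that is robust under \emph{topological} (merely locally flat) embeddings and under connected sum with $S^2\times S^2$, which does not change $b_2^+$ in a way that obviously helps. The subtlety is that $S^2\times S^2$ is indefinite, so stabilizing does not increase $b_2^+$; the obstruction must instead come from the number of \emph{linearly independent} families Seiberg–Witten invariants (indexed by a growing family of $\spinc$ structures or by a growing set of cohomology classes $\alpha$) that simultaneously must be ``seen'' inside $\im(\pi_0(\TDiff_\del(W))_{\ab}\to\cdots)$. So the correct invariant to bound is not $b_2(W)$ but the maximal rank of a subgroup of $\pi_0(\TDiff_\del(W))_{\ab}$ that can map onto independent families-SW classes — and this is controlled by the topology of $W$ (e.g. by $b_2(W)$ together with the homology of $\partial W$) via the gluing formula of \cite{BK20gluing,lin2022family}. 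Once that bound is in hand as a finite number depending only on $W$, and once the test family $X'_k$ is arranged so that the required rank exceeds it, the proof closes. A secondary technical point is handling the topological (non-smooth) case: here I would either invoke that a topological rational homology disk/cylinder suffices for the obstruction (as already in Theorem~\ref{cor: intro stabilized 4-manifolds homology disk}) and reduce the general $W$ to that case by noting that if $\pi_0(\TDiff(X'_k))$ localizes to $W$ then in particular the infinite-rank summand does, and the obstruction only used the $\Q$-homology type of the piece restricted near the relevant surface — or, if a general compact $W$ genuinely requires more, run the families adjunction inequality directly for the topologically embedded $W$ using Baraglia's inequality, which is available in the topological category for the relevant setup.
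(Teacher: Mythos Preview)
Your broad strategy---fix a hypothetical universal $W$, note that some finite numerical invariant of $W$ is bounded, then test against an ambient 4-manifold whose Torelli group is too large to localize---is exactly the shape of the paper's argument (which factors through \cref{thm: universal non-exitence general}). However, the mechanism you describe for extracting the contradiction is both more complicated and vaguer than what is actually needed, and as written your outline has a real gap.

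The key step you are missing is purely homological. One does not need to bound any ``maximal rank of families Seiberg--Witten classes visible in $W$'' via the gluing formulas. Instead: choose the test manifold $X'$ (a single one suffices, no family $X'_k$ is required) so that $b_2(X') > b_2(W) + b_1(\del W)$; this is the only role of stabilization. (Incidentally, $b_2^+(S^2\times S^2)=1$, so stabilizing \emph{does} increase $b_2^+$, contrary to what you wrote---but only total $b_2$ matters here.) Then for \emph{any} locally flat topological embedding $W\hookrightarrow X'$, the Mayer--Vietoris sequence for $X' = W \cup_{\del W} (X'\setminus W)$ forces
\[
\im\bigl(H_2(X'\setminus W;\Z) \to H_2(X';\Z)\bigr) \neq 0.
\]
Pick any nonzero $\alpha$ in this image and represent it by a smooth closed surface $\Sigma \subset X'\setminus W$ (this works even though $W$ is only topological; see \cref{lem: surface representative}). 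Now the surface-complement non-localization, \cref{thm: intro stabilized 4-manifolds}, applies directly: no infinite-rank subgroup of the $\Z^\infty$-summand $\calZ(\alpha)$ localizes to $X'\setminus\Sigma \supset W$. Since the hypothetical universality of $W$ would force all of $\pi_0(\TDiff(X'))$---in particular $\calZ(\alpha)$---to localize to $W$, this is the contradiction.

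So the finite invariant you want is simply $b_2(W)+b_1(\del W)$, and the obstruction is \cref{thm: intro stabilized 4-manifolds} applied to a surface living in the complement of $W$. Your proposed detour through bounding the rank of $\pi_0(\TDiff_\del(W))_\ab$ via the families gluing formula is a red herring: no such bound is established in the paper (or needed), and it is not clear how one would extract it from the pairing formula for arbitrary $W$.
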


\cref{cor: universal non-exitence} is a families analog of \cite[Corollary 1.5]{YasuiNonexistence19}.
This is a rather immediate consequence of the following much stronger result, which is a counterpart of \cite[Theorem 1.3]{YasuiNonexistence19}:

\begin{thm}
\label{thm: universal non-exitence general}
Given $m,n>0$, there exists a simply-connected closed smooth 4-manifold $X$ with the following property: for any compact codimension-0 topological submanifold $W$ of $X$ with $b_2(W) < m$ and $b_1(\del W)<n$,
there exists a $\Z^\infty$-summand $\calZ(W)$ of
$\pi_0(\TDiff(X))_\ab$ such that any infinite-rank subgroup of $\calZ(W)$ does not localize to $W$.
\end{thm}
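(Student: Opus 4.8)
The plan is to produce, for given $m,n>0$, a closed simply-connected 4-manifold $X$ with a huge "reservoir" of exotic diffeomorphisms detected by families Seiberg--Witten theory, arranged so that the families adjunction inequality of Baraglia~\cite{B202} rules out localization to any codimension-0 submanifold $W$ that is small in the sense $b_2(W)<m$, $b_1(\partial W)<n$. The starting point is a base manifold $X_0$ (e.g. an elliptic surface $E(k)$ with a few $\overline{\CP}^2$ summands, as in \cref{ex: homoplogy disk}) carrying infinitely many $\spinc$ structures $\{\fraks_i\}$ with nonzero (families) Seiberg--Witten invariants, and Ruberman-type exotic diffeomorphisms $f_i$ built from these, with the property that the family SW invariant of $f_i$ in the $\fraks_j$-component is nonzero iff $i=j$; this gives the $\Z^\infty$-summand in $\pi_0(\TDiff(X))_\ab$. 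Then I would stabilize by a controlled number of $S^2\times S^2$ summands — the number depending on $m$ and $n$ — so that the formal dimension of the relevant parametrized moduli spaces is correct and the adjunction-type obstruction has room to operate.

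The core mechanism is the contrapositive of localization: if a diffeomorphism $f$ supported in $\Int(W)$ represents a class whose families SW invariant is detected, then one can build a family of metrics/perturbations on $X$ that is product-like outside $\nu(\partial W)$ and varies only over $W$; running the gluing formula for the families Seiberg--Witten invariant (\cite{BK20gluing}, \cite{lin2022family}) together with Baraglia's families adjunction inequality applied to surfaces that meet $W$ in a controlled way, one forces a numerical inequality relating $b_2(W)$, $b_1(\partial W)$, and the number of distinct $\spinc$ structures that $f$ can "see." Concretely, a single small $W$ can only carry diffeomorphisms detected by finitely many $\spinc$ structures on $X$, the bound being a function of $m$ and $n$; hence for the particular $X$ constructed from infinitely many $\fraks_i$, the subset of indices $i$ for which $f_i$ could possibly localize to $W$ is finite, and the complementary infinite subset spans the desired $\Z^\infty$-summand $\calZ(W)$. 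That every infinite-rank subgroup of $\calZ(W)$ fails to localize then follows because localization of a subgroup forces localization of each of its generators, and a $\Z^\infty$-summand on which all but finitely many standard generators fail to localize contains no infinite-rank subgroup that localizes.

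More precisely, the steps in order: (1) fix $X_0$ and the infinite family of $\spinc$ structures with nontrivial SW invariants, and record how many $S^2\times S^2$-stabilizations are needed as a function of $(m,n)$ so that the families dimension count works after cutting along $\partial W$; set $X$ to be this stabilization. (2) Construct the $f_i$ of Ruberman type and verify, via the families SW gluing formula, that their images in $\pi_0(\TDiff(X))_\ab$ are linearly independent, exhibiting a $\Z^\infty$-summand $\calZ$. (3) Given any codimension-0 topological $W\subset X$ with $b_2(W)<m$, $b_1(\partial W)<n$ — upgrading to the topological category as in \cref{cor: intro stabilized 4-manifolds homology disk}, since the obstruction only uses the homology of the complement and of $\partial W$ — suppose some $f_i$ localizes to $W$; push all the relevant family of configurations into $W$ and apply Baraglia's families adjunction inequality to the embedded surfaces representing the SW basic classes. (4) Derive a bound: the set $I(W)=\{i : f_i \text{ localizes to } W\}$ has cardinality bounded by a function of $m,n$ (in particular finite). (5) Set $\calZ(W)$ to be the span of $\{f_i : i\notin I(W)\}$, a $\Z^\infty$-summand of $\calZ$ hence of $\pi_0(\TDiff(X))_\ab$; conclude that no infinite-rank subgroup of $\calZ(W)$ localizes to $W$ since any such subgroup must contain, for some $i\notin I(W)$, a class with nonzero $f_i$-coordinate, which cannot be represented by a diffeomorphism supported in $\Int(W)$.

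The main obstacle I expect is step (3)–(4): making the "push everything into $W$" argument precise in the families setting when $W$ is only a topological submanifold and $\partial W$ may have positive (but bounded) first Betti number. One must control how the reducible locus and the chamber structure of the families SW invariant interact with the cut along $\partial W$ — this is exactly where the $b_1(\partial W)<n$ and $b_2(W)<m$ hypotheses must be converted into a genuine dimension/obstruction bound via the gluing formula — and one must ensure the number of stabilizing $S^2\times S^2$'s chosen in step (1) is enough to absorb the resulting correction terms uniformly. Getting a clean, uniform bound on $|I(W)|$ in terms of $(m,n)$, rather than one that degenerates as $W$ varies, is the crux.
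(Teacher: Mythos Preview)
Your proposal overcomplicates the argument and misses the paper's key reduction. The paper does not cut along $\partial W$, does not run any gluing formula across $\partial W$, and does not try to bound the number of $\spinc$ structures visible from inside $W$. Instead, the hypotheses $b_2(W)<m$ and $b_1(\partial W)<n$ are used exactly once, and only homologically: choose any simply-connected $X_0$ with $b_2(X_0)\geq m+n$, let $X = X_0\#N(S^2\times S^2)$ be the stabilization furnished by \cref{thm: intro stabilized 4-manifolds}, and observe via Mayer--Vietoris for $X = W \cup (X\setminus W)$ that the map $H_2(X\setminus W;\Z)\to H_2(X;\Z)$ has nonzero image. Pick any nonzero $\alpha$ in this image, realize $\alpha$ by a smooth closed surface $\Sigma$ lying entirely in $X\setminus W$ (via \cref{lem: surface representative}), and set $\calZ(W)=\calZ(\alpha)$ from \cref{thm: intro stabilized 4-manifolds}. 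Any diffeomorphism supported in $\Int(W)$ is automatically supported in $X\setminus\Sigma$, so localization to $W$ implies localization to $X\setminus\Sigma$; but \cref{thm: intro stabilized 4-manifolds} already says that $\calZ(\alpha)\cap\im\bigl(\pi_0(\TDiff_c(X\setminus\Sigma))_\ab\to\pi_0(\TDiff(X))_\ab\bigr)$ is finitely generated. That is the whole proof.

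Your step~(3) is where the genuine gap lies. You propose to apply the families adjunction inequality to ``embedded surfaces representing the SW basic classes,'' but that is not how the inequality is used: it requires a surface whose embedding is preserved up to isotopy by the diffeomorphism, and it outputs a genus bound, not a count of $\spinc$ structures. In the paper's argument the surface $\Sigma$ is chosen in the \emph{complement} of $W$, so it is literally fixed pointwise by any $f$ supported in $W$; the adjunction inequality (already packaged inside \cref{thm: intro stabilized 4-manifolds}) then bounds $g(\Sigma)$ from below linearly in the index $i$ of the detecting $\spinc$ structure, forcing $i$ to be bounded. There is no need to control reducibles or chamber structure across $\partial W$, and the number of stabilizations does not depend on $(m,n)$ at all---only the requirement $b_2(X_0)\geq m+n$ does. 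Your step~(5) also has a gap: knowing that each individual $f_i$ with $i\notin I(W)$ fails to localize does not by itself prevent some infinite-rank combination of them from localizing; the paper avoids this by proving directly that the entire intersection of $\calZ(\alpha)$ with the image of the localized group is finitely generated.
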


\subsection{Compact vs. Non-compact}

Next, we give an example of infinite generation on a closed 4-manifold that localizes to an open submanifold with a much smaller Betti number, but not to any compact subset of it.
This illustrates that even if some infinitely generated group can localize to a smaller piece, compactness can be crucial:

\begin{thm}
\label{thm: localization intro}
Let $X=K3 \# \overline{\mathbb{CP}}^2$ and $X' = X\#S^2\times S^2$. Then there exist a $\Z^\infty$-summand $\mathcal{Z}$ of $\pi_0(\TDiff(X'))_\ab$ and a contractible open submanifold $C \subset X$ such that:
\begin{itemize}
\item[(i)] 
$\mathcal{Z}$ localizes to $C \# S^2 \times S^2 \subset X'$, but
\item[(ii)] for any compact subset $K$ of $C\#S^2\times S^2$, any infinite-rank subgroup of $\calZ$ does not localize to $K$.
\end{itemize}
\end{thm}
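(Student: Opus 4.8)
The plan is to build $C$ so that $C \# S^2 \times S^2$ carries infinitely many compactly supported exotic diffeomorphisms coming from a Ruberman-type construction, yet no compact piece of it can carry more than finitely many of them (up to abelianization). The open manifold $C$ will be an increasing union $C = \bigcup_{k} C_k$ of nested compact contractible (or at least homology-disk) submanifolds $C_1 \subset C_2 \subset \cdots$, each $C_k$ a cork-like region in $X = K3 \# \overline{\mathbb{CP}}^2$ adapted to a $\spinc$ structure $\fraks_k$ with growing absolute value of $c_1^2$, or more precisely with the ``period'' parameter of the Ruberman construction pushed further and further out. In $X' = X \# S^2 \times S^2$ one has the extra $S^2\times S^2$ summand needed to run the families Seiberg--Witten argument (this is the stabilization that makes the diffeomorphisms topologically isotopic to the identity and makes the families invariants computable via the gluing formula of Baraglia--Konno and Lin). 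For each $k$ I would produce an exotic diffeomorphism $f_k$ supported in $\mathrm{Int}(C_k \# S^2 \times S^2)$, detected by the families Seiberg--Witten invariant associated to $\fraks_k$; positivity/linear independence of these invariants over $k$ gives the $\Z^\infty$-summand $\calZ$ generated by the classes $[f_k]$, proving (i), since each $f_k$ is already supported in $\mathrm{Int}(C_k \# S^2\times S^2) \subset \mathrm{Int}(C \# S^2\times S^2)$.

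For (ii), the key point is the families adjunction inequality of Baraglia~\cite{B202}, exactly as used in the proof of the surface-complement non-localization result (Theorem~\ref{thm: intro stabilized 4-manifolds}) that underlies Theorem~\ref{cor: intro stabilized 4-manifolds homology disk}. Given a compact subset $K \subset C \# S^2 \times S^2$, by compactness $K \subset C_k \# S^2\times S^2$ for some finite $k$, and $C_k$ embeds into $X'$ with a specific (bounded) complement. If an infinite-rank subgroup of $\calZ$ localized to $K$, then infinitely many linear combinations of the $f_j$ would be representable by diffeomorphisms supported in a fixed compact piece $C_k \# S^2\times S^2$ whose complement $X' \setminus \mathrm{Int}(C_k)$ has bounded second Betti number and bounded $b_1$ of the boundary. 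Feeding this into the families adjunction inequality (or the cohomological constraint on which $\spinc$ structures can support a nonzero families invariant supported in a region with given homology), one gets that only finitely many $\fraks_j$ — those whose characteristic class is ``small'' relative to the fixed bound coming from $C_k$ — can contribute, contradicting infinite rank. Concretely I would use that a families SW class localized to a codimension-$0$ submanifold $W$ forces $c_1(\fraks_j)$ restricted to $W$ to satisfy an inequality governed by $b_2^+(W)$ and the genus/homology of embedded surfaces, and arrange the $\fraks_j$ so these restricted classes grow without bound; this is the same mechanism as in Theorem~\ref{thm: universal non-exitence general}.

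The main obstacle, and the step requiring the most care, is the \emph{simultaneous} design of the cork exhaustion $C = \bigcup_k C_k$ and the $\spinc$ structures $\fraks_k$ so that three things hold at once: each $f_k$ genuinely localizes to $C_k$ (so one needs the Ruberman-type diffeomorphism for $\fraks_k$ to be supported in an increasingly large but still contractible region, which I expect to arrange by a connected-sum/``internal'' version of Ruberman's construction, gluing in reflections along embedded spheres or tori living inside $C_k$); the ambient classes $[f_k] \in \pi_0(\TDiff(X'))_\ab$ are linearly independent and split off a $\Z^\infty$-summand (linear independence of the total families SW invariants, handled by the gluing formula as in \cite{BK20gluing,lin2022family}); and the non-localization bound from the adjunction inequality is uniform in the sense that, for each fixed $k$, all but finitely many $f_j$ violate it when forced into $C_k$. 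Making $C$ genuinely contractible (rather than merely a homology disk) while retaining enough room for the constructions is the delicate topological input; I would likely take $C$ to be an infinite Mazur-type end, i.e. an infinite boundary-connected-sum / telescoping of Mazur pieces, and verify contractibility by a van Kampen plus Mayer--Vietoris colimit argument. Once the exhaustion is in place, parts (i) and (ii) follow formally from the families SW machinery already invoked elsewhere in the paper.
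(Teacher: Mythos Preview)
Your outline for (ii) is essentially the paper's argument: any compact $K$ lies in some $C_k\#S^2\times S^2$, the complement of $C_k$ in $X'$ carries a homologically nontrivial smooth surface $\Sigma$ (Mayer--Vietoris, since $C_k$ is a homology disk), and Baraglia's families adjunction inequality then bounds which $\fraks_j$ can yield $\FSW\neq 0$ for diffeomorphisms fixing $\Sigma$, leaving only finitely many. One correction: the operative constraint is on $|c_1(\fraks_j)\cdot[\Sigma]|$ for $\Sigma$ in the \emph{complement} of the support, not on $c_1(\fraks_j)|_W$ or on $b_2^+(W)$; the mechanism you describe via restriction to $W$ is not what makes this work.

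The genuine gap is in (i), and your own proposed mechanisms do not close it. Reflections along spheres or tori ``living inside $C_k$'' cannot produce nontrivial Ruberman-type commutators: a contractible $C_k$ has $H_2=0$, so it contains no homologically essential spheres or tori to reflect along, and the $f_0$ factor must live in the $S^2\times S^2$ summand, not in $C_k$. Likewise, an abstract telescope of Mazur pieces gives a contractible open manifold but no link to \emph{distinct smooth structures on the ambient} $X$, which is precisely what drives $\FSW$ in the Ruberman construction. The paper's route is different and relies on three specific ingredients you do not invoke. First, the Akbulut--Yasui infinite family of \emph{knotted} embeddings $C_{K_i}\hookrightarrow X^\tau$ of the Akbulut cork \cite{akbulut2009knotting}, engineered so that twisting along $C_{K_i}$ yields the Fintushel--Stern knot-surgered manifold $X_{K_i}$ with basic classes growing in $i$. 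Second, \cref{pro: cork_stab}: the Akbulut cork twist extends smoothly over $Y\#S^2\times S^2$, so the mediating diffeomorphism $\phi_i:X^\tau\#S^2\times S^2\to X_{K_i}\#S^2\times S^2$ can be arranged to carry $C_{K_i}\#S^2\times S^2$ to a stabilized cork, and the resulting $g_i$ is supported there. Third, the Finger and Encasement Lemmas of Melvin--Schwartz \cite{melvin2021higher} let one encase $\bigcup_{i\le n} C_{K_i}$ inside a single compact contractible AC-manifold $\overline{C}_n$ with $\overline{C}_n\subset\overline{C}_{n+1}$, and one sets $C=\bigcup_n\mathrm{Int}(\overline{C}_n)$. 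Without these inputs---knotted corks producing the exotic structures, stabilized extension of the twist, and the encasement producing the nested contractible exhaustion---the localization claim in (i) is not established.
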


The open contractible 4-manifold $C$ is constructed in a fairly explicit way out of Akbulut corks.
The 4-manifold $X'=K3\#\overline{\CP}^2\#S^2\times S^2$ is the smallest 4-manifold for which the Torelli group is known to be infinitely generated \cite{Rub99}, while one can see a slightly smaller 4-manifold $K3\#S^2\times S^2$ actually satisfies this property by a similar argument\footnote{This can be deduced from \cite{konno2023homology}. The argument is summarized in \cref{thm: infinite generation general}.}.

\begin{rmk}
The proof of Theorem~\ref{thm: localization intro} factors through proving the following result: 
For every $n>0$, there exists a compact smooth 4-manifold $C$ with $b_2(C)=2$ such that $\pi_0(\TDiff_\del(C))_\ab$ contains a $\Z^n$-summand.
\end{rmk}

\subsection{Non-localization of $\pi_0(\Diff(X))$}

We also give a result on non-localization for $\pi_0(\Diff(X))_{\ab}$, instead of $\pi_0(\TDiff(X))_{\ab}$.
We use the term localization as in \cref{defi: localization} also for a subgroup of $\pi_0(\Diff(X))_{\ab}$.
Recently, the first author \cite{konno2023homology} and Baraglia~\cite{Baraglia23mapping} gave the first examples of simply-connected closed 4-manifolds with infinitely generated mapping class groups.
We see that a rather small open 4-manifold can admit infinitely generated (abelianized) mapping class group with a non-localization property to compact subsets:

\begin{thm}
\label{thm: localization intro Diff}
There exist a contractible open smooth 4-manifold $C$ and a $(\Z/2)^\infty$-summand $\mathcal{Z}$ of $\pi_0(\Diff_c(C'))_\ab$ for $C'=C\#S^2\times S^2$, such that, for any compact subset $K$ of $C'$, any infinitely generated subgroup of $\calZ$ does not localize to $K$.
\end{thm}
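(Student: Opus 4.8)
\textbf{Proof proposal for Theorem~\ref{thm: localization intro Diff}.}

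The plan is to mimic the structure of the proof of Theorem~\ref{thm: localization intro}, but working with the full (abelianized) mapping class group $\pi_0(\Diff_c)$ instead of the Torelli part, and with $\Z/2$-coefficients rather than $\Z$, so that the infinitely generated summand comes from the mapping-class-group examples of Baraglia~\cite{Baraglia23mapping} and the first author~\cite{konno2023homology} rather than from Ruberman's Torelli examples. First I would recall that in \cite{konno2023homology} one produces, on a suitable simply-connected closed $4$-manifold $Z$ (built by a connected sum/blow-up construction involving a $K3$-type piece), infinitely many diffeomorphisms $g_i$ whose classes span a $(\Z/2)^\infty$-summand of $\pi_0(\Diff(Z))_\ab$, detected by a families Seiberg--Witten / families Bauer--Furuta type invariant valued in a $\Z/2$-module. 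The key point to extract from that construction — exactly as in the Torelli case — is that each $g_i$ can be taken to be \emph{compactly supported in an open contractible piece}: the relevant diffeomorphisms are built from cork-type twists plus a fixed collection of local moves, all of which can be pushed into an explicitly constructed contractible open submanifold $C$ (the same Akbulut-cork-based open $4$-manifold $C$ featuring in Theorem~\ref{thm: localization intro}, or a minor variant), after connect-summing with $S^2\times S^2$ to absorb the intersection-form obstruction. This gives statement ``(i)'' in disguise: the $(\Z/2)^\infty$-summand $\calZ \subset \pi_0(\Diff_c(C'))_\ab$, $C' = C\#S^2\times S^2$, maps to a $(\Z/2)^\infty$-summand of $\pi_0(\Diff(X))_\ab$ for the appropriate closed $X$.

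Next I would establish the non-localization to compact subsets. This is where the families adjunction inequality of Baraglia~\cite{B202} enters, together with the exhaustion argument. Given any compact $K \subset C'$, choose a compact smooth codimension-$0$ submanifold $W$ of $C'$ with $K \subset \Int(W)$; since $C'$ is a fixed open manifold, $b_2(W)$ and $b_1(\del W)$ are bounded independently of which diffeomorphisms we try to fit inside — more precisely, $W$ embeds in $C'$, and $C'$ has a fixed finite-type topology, so $H_*(W)$ is controlled. The crucial step is: if an infinitely generated subgroup of $\calZ$ localized to $K$, then infinitely many independent classes $g_i$ would be supported in $W$, hence would define elements of $\pi_0(\Diff_\del(W))_\ab$ that remain linearly independent in $\pi_0(\Diff(X))_\ab$. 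But the families Seiberg--Witten invariant detecting these classes factors through a gluing formula (Baraglia--Konno~\cite{BK20gluing}, Lin~\cite{lin2022family}) along $\del W$, and the families adjunction inequality bounds how many linearly independent such classes a fixed $W$ with fixed $b_2(W), b_1(\del W)$ can carry — this is precisely the mechanism packaged in Theorem~\ref{thm: universal non-exitence general} (the topological-submanifold non-localization statement), applied here to $W \subset C' \subset X$. So only finitely many of the $g_i$ can localize to any single $W$, contradicting infinite generation.

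The main obstacle I anticipate is the first step: verifying carefully that the mapping-class-group generators of \cite{konno2023homology}/\cite{Baraglia23mapping} can genuinely be realized \emph{compactly supported inside the prescribed contractible open $C'$}, as opposed to merely supported in some large codimension-$0$ piece of the closed manifold. In the Torelli case (Theorem~\ref{thm: localization intro}) this is handled by an explicit cork-based construction; for the non-Torelli generators one must check that the extra ingredient making them act nontrivially on homology (a diffeomorphism permuting or reflecting $S^2\times S^2$ summands, or acting on a $K3$ summand) can be traded, after stabilization by $S^2\times S^2$, for something supported in the contractible region — using that such homological actions become smoothly isotopic to the identity after one stabilization while the families invariant obstruction survives. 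Once that localization-of-the-generators step is in place, the non-localization-to-compacta half is a formal consequence of Theorem~\ref{thm: universal non-exitence general} (or its proof) applied to the exhaustion $C' = \bigcup W_j$, since each $W_j$ has uniformly bounded $b_2$ and $b_1(\del \cdot)$.
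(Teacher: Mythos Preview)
Your proposal has a real misconception at the localization step. You assume the $(\Z/2)^\infty$ must come from \emph{non-Torelli} generators imported from \cite{konno2023homology,Baraglia23mapping}, and you then worry (correctly) that such generators, which act nontrivially on homology, are hard to squeeze into a piece $C'=C\#S^2\times S^2$ with $b_2=2$. The paper sidesteps this entirely: it uses the \emph{same} Torelli diffeomorphisms $g_i$ already constructed in the proof of \cref{thm: localization intro}. The new, and essentially one-line, observation is algebraic: each $g_i$ is a product of two conjugates of $f_0$, and $f_0$ has order $2$ in $\pi_0(\Diff_\del(S^2\times S^2\setminus\Int D^4))$, so the image of $g_i$ in $\pi_0(\Diff_c(C'))_\ab$ is $2$-torsion. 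Independence is then detected by the $\Z/2$-valued families Seiberg--Witten homomorphism \eqref{eq: homomorphism Diff sO mod 2}, which is defined on $\pi_0(\Diff(X',\fraks))$ with no Torelli or homology-orientation hypothesis; since $g_i$ is supported in $C'$ and $\fraks|_{C'}$ is the spin structure, this factors through $\pi_0(\Diff_c(C'))_\ab$. Thus the anticipated obstacle you flag simply does not arise.

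Your non-localization sketch is closer to the mark but still imprecise. You invoke \cref{thm: universal non-exitence general}, which is a statement about \emph{closed} $X$, and talk about bounds on $b_2(W)$, $b_1(\del W)$ for a compact $W\subset C'$. The paper's argument does not stay inside $C'$: it uses that $C'$ sits in the closed manifold $X'=K3\#\overline{\CP}^2\#S^2\times S^2$ via the nested exhaustion $C=\bigcup_n \Int(C_n)$, so any compact $K\subset C'$ is contained in some $C_n\#S^2\times S^2$; then a Mayer--Vietoris argument produces a homologically nontrivial surface $\Sigma\subset X'$ disjoint from $C_n$, and the families adjunction inequality on the closed $X'$ (exactly as in the proof of \cref{thm: intro stabilized 4-manifolds}) bounds how many of the $g_i$ can be supported away from $\Sigma$. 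So the mechanism is the same adjunction bound, but run on the ambient closed manifold, not intrinsically on $C'$.
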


\begin{rmk}
Gompf \cite{Gompf_diffeo_R4} proved the existence of an open 4-manifold with trivial intersection form (actually an exotic $\R^4$) that has infinitely generated mapping class group.
However, this is substantially different from our situation, since the mapping class group considered in \cite{Gompf_diffeo_R4} is not the compactly supported mapping class group as opposed to \cref{thm: localization intro Diff}, and the infinite generation in \cite{Gompf_diffeo_R4} comes from diffeomorphisms that are non-trivial near the end. 
\end{rmk}

\subsection{Non-localization to surface complements}
\label{subsection intro Localizability}

Many of the above results are, in fact, consequences of the following non-localization.
This result says that some infinitely generated group of exotic diffeomorphisms cannot localize even to the complement of any homologically non-trivial surface, if a 4-manifold is appropriately stabilized.
Precisely, we shall prove:

\begin{thm}
\label{thm: intro stabilized 4-manifolds}
Let $X$ be a simply-connected closed smooth 4-manifold.
Then there is $N \geq 0$ with the following property: set $X' := X\#N(S^2\times S^2)$. For every non-zero homology class $\alpha \in H_2(X';\Z)$, there exists a $\Z^\infty$-summand $\calZ(\alpha)$ of  $\pi_0(\TDiff(X'))_{\ab}$ such that any infinite-rank subgroup of $\calZ(\alpha)$ does not localize to $X'\setminus \Sigma$,
for any oriented, closed, connected, and smoothly embedded surface $\Sigma \subset X'$ that represents  $\alpha$.

Furthermore, we can take $\calZ(\alpha)$ to depend only on the ray $\Z\cdot \alpha \subset H_2(X';\Z)$, i.e. $\calZ(\alpha)=\calZ(n\alpha)$ for any $\alpha \neq 0$ and $n \in\Z\setminus\{0\}$.
\end{thm}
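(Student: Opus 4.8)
The plan is to build the group $\calZ(\alpha)$ out of Ruberman-type exotic diffeomorphisms whose non-triviality is detected by the one-parameter families Seiberg--Witten invariant, and then to obstruct localization using the families adjunction inequality of Baraglia \cite{B202}. First I would fix, after stabilizing with enough copies of $S^2\times S^2$, a large supply of $\spinc$ structures $\fraks_k$ on $X'$ whose (ordinary) Seiberg--Witten invariants vanish but for which the relative families invariants of suitable isotopies do not; Ruberman's construction via logarithmic transforms, or its Seiberg--Witten refinements in \cite{BK20gluing,iida2022diffeomorphisms}, produces an infinite family of commuting such diffeomorphisms $f_k$, and the associated families invariants $\FSW(X',\fraks_k)$, viewed as a homomorphism $\pi_0(\TDiff(X'))_\ab \to \Z/2$ (or $\Z$ after a mod-2 count is upgraded), are linearly independent over the $f_k$. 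Choosing $N$ large enough that all the needed $\fraks_k$ exist and have the right formal dimension, the span of the $f_k$ is then a $\Z^\infty$-summand: linear independence of the detecting homomorphisms splits it off.

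The heart of the argument is the non-localization statement. Suppose some infinite-rank subgroup of $\calZ(\alpha)$ localized to $X'\setminus\Sigma$ for an embedded surface $\Sigma$ representing $\alpha$. Then infinitely many of the $f_k$ (or independent combinations of them) would be represented by diffeomorphisms supported away from $\Sigma$, hence would extend over a tubular neighborhood of $\Sigma$ as the identity; equivalently the corresponding $S^1$-families of metrics and perturbations can be taken to fix a neighborhood of $\Sigma$. The families adjunction inequality then constrains which $\spinc$ structures can have nonzero families invariant in terms of $\langle c_1(\fraks_k),[\Sigma]\rangle$, $[\Sigma]^2$, and $g(\Sigma)$: for a diffeomorphism localized to the complement of $\Sigma$, the relevant moduli space in the family comes with a section over $\Sigma$, forcing $|\langle c_1(\fraks_k),[\Sigma]\rangle| + [\Sigma]^2 \le 2g(\Sigma)-2$ whenever $\FSW(X',\fraks_k)\neq 0$. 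But by construction the $c_1(\fraks_k)$ pair with $\alpha$ in an unbounded way (this is where the freedom gained from stabilization, and the ability to take $\fraks_k$ with arbitrarily large $c_1$-pairing against any prescribed class, is used), so only finitely many $k$ can satisfy the inequality --- contradicting that infinitely many $f_k$ localize. The ray-dependence $\calZ(\alpha)=\calZ(n\alpha)$ is then automatic, since the adjunction bound only sees $\langle c_1(\fraks_k),\alpha\rangle$ up to the positive scalar $n$, so the same family of $\spinc$ structures works for $n\alpha$.

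The main obstacle I anticipate is arranging, simultaneously and after a \emph{single} choice of $N$ independent of $\alpha$, an infinite family of exotic diffeomorphisms that (i) pairwise commute, (ii) span a genuine $\Z^\infty$-\emph{summand} (not merely an infinite-rank subgroup) of $\pi_0(\TDiff(X'))_\ab$, and (iii) have detecting $\spinc$ structures whose $c_1$ grows without bound against \emph{every} nonzero class $\alpha$ at once. Point (iii) is delicate because a fixed list of $\spinc$ structures has $c_1$'s spanning only a finite-rank sublattice; the resolution is presumably to let $\calZ(\alpha)$ genuinely depend on $\alpha$ (only its ray), selecting for each ray a cofinal subfamily of the $f_k$ whose $c_1$-pairings with $\alpha$ diverge, which is possible once $N$ is large because the $c_1(\fraks_k)$ become equidistributed enough in $H^2(X';\Z)$ that no single hyperplane $\{\,\langle\cdot,\alpha\rangle\ \text{bounded}\,\}$ can contain all but finitely many of them. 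A secondary technical point is verifying the families adjunction inequality applies in the \emph{relative} setting (a mapping torus rather than a closed family), and that supports in $X'\setminus\Sigma$ really do yield the neighborhood-fixing families needed to invoke it; this should follow from \cite{B202} together with the gluing/excision formulas of \cite{BK20gluing,lin2022family}, but the bookkeeping of orientations and formal dimensions is where care is required.
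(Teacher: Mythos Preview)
Your overall architecture matches the paper's: build $\calZ(\alpha)$ from Ruberman-type diffeomorphisms detected by families Seiberg--Witten invariants, then use Baraglia's families adjunction inequality to bound which of them can be supported off a given $\Sigma$. You have also correctly isolated the crux: for each nonzero $\alpha$ one needs detecting spin$^c$ structures $\fraks_k$ with $|\langle c_1(\fraks_k),\alpha\rangle|\to\infty$. The gap is in your resolution of this crux. The ``equidistribution'' claim---that after enough stabilization the $c_1(\fraks_k)$ avoid every slab $\{|\langle\,\cdot\,,\alpha\rangle|\le C\}$ for every $\alpha$---is not something you can arrange by picking a single master list; the $c_1$'s arising from log transforms or knot surgery all lie on a single affine line in $H^2$, not spread through the lattice, so there will always be hyperplanes (namely $\delta^\perp$) containing all of them.

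The paper's fix is structural rather than probabilistic. Using a geography result of Park (\cref{lem: geography}, following \cite{auckly2023smoothly}), one chooses $N$ so that $X'\cong M_X\#S^2\times S^2$ with $M_X$ symplectic and containing a nucleus $N(2)$; let $\sigma,\delta$ be the section and fiber classes and $K$ the canonical class. The spin$^c$ structures $\fraks_i'=K+2i\delta$ and the diffeomorphisms $g_i$ coming from log transforms on $N(2)$ are then \emph{fixed once and for all}, independent of $\alpha$. For each $\alpha$ with $\alpha^2\ge0$, one invokes Wall's theorem via \cite[Lemma~4.1]{auckly2023smoothly} to produce a diffeomorphism $\Phi$ of $X'$ with $\Phi_*\alpha=2A_1(\sigma+\delta)+2A_1A_2\delta$ for integers $A_1>0$, $A_2\ge0$, and sets $\calZ(\alpha)=\langle\Phi^{-1}g_i\Phi\rangle$. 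Ordinary adjunction on $M_X$ gives $K\cdot\delta=K\cdot(\sigma+\delta)=0$, whence $\langle c_1(\fraks_i'),\Phi_*\alpha\rangle=4A_1 i$, which is unbounded because $A_1>0$. So the right move is not to spread the $\fraks_k$ out, but to \emph{move $\alpha$ into the nucleus}, where a single line of spin$^c$ structures already does the job. Two further points your sketch does not cover: the families adjunction requires $[\Sigma]^2\ge0$, so for $\alpha^2<0$ the paper reverses orientation and reruns the argument with the companion symplectic manifold $M_{-X}$ from \cref{lem: geography}; and the hypothesis of \cref{thm: family adjunction} is merely that $\Sigma$ and $f(\Sigma)$ are smoothly isotopic (immediate once $f$ is supported off $\Sigma$), not any neighborhood-fixing or section-over-$\Sigma$ condition, so no gluing or excision is needed for this step.
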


The non-localization of all infinite-rank subgroups of $\calZ(\alpha)$ in \cref{thm: intro stabilized 4-manifolds} is equivalent to that the intersection
\begin{align}
\label{eq: Z cap intro}
\calZ(\alpha) \cap \im(\pi_0(\TDiff_c(X'\setminus \Sigma))_\ab \to \pi_0(\TDiff(X'))_\ab)
\end{align}
is finitely generated.

As an obvious consequence of \cref{thm: intro stabilized 4-manifolds}, we have:

\begin{cor}
\label{cor: Coker intro}
Let $X'$ be as in \cref{thm: intro stabilized 4-manifolds}.   
Then, for every orientable, closed, connected, homologically non-trivial, and smoothly embedded surface $\Sigma \subset X'$, 
\begin{align*}
\Coker(\pi_0(\TDiff_c(X'\setminus \Sigma))_\ab \to \pi_0(\TDiff(X'))_\ab)
\end{align*}
contains a $\Z^\infty$-summand.
\end{cor}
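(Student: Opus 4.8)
The plan is to deduce this immediately from Theorem~\ref{thm: intro stabilized 4-manifolds}, which is the main technical input and which we may assume. The key observation is purely algebraic: if $A$ is an abelian group (here $A = \pi_0(\TDiff(X'))_\ab$), $B \subset A$ is a subgroup (here $B$ is the image of $\pi_0(\TDiff_c(X'\setminus\Sigma))_\ab$), and $\calZ \subset A$ is a $\Z^\infty$-summand such that $\calZ \cap B$ is finitely generated, then $\Coker(B \hookrightarrow A) = A/B$ contains a $\Z^\infty$-summand. So the whole argument is: (i) invoke Theorem~\ref{thm: intro stabilized 4-manifolds} to get $\calZ(\alpha)$ with $\calZ(\alpha)\cap\im(\cdots)$ finitely generated, where $\alpha = [\Sigma]\neq 0$; (ii) run the algebraic lemma.

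For step (ii), here is the argument in more detail. Write $\calZ = \calZ(\alpha)$ and choose a splitting $A = \calZ \oplus A'$. Since $\calZ \cong \Z^\infty$ is free abelian and $\calZ\cap B$ is a finitely generated subgroup of it, $\calZ \cap B \cong \Z^r$ for some finite $r$, and moreover $\calZ\cap B$ is contained in a direct summand $\Z^r \subset \calZ$; complementarily we may write $\calZ = \Z^r \oplus \calZ_0$ with $\calZ_0 \cong \Z^\infty$ and $(\Z^r \oplus \calZ_0)\cap B = \calZ\cap B \subset \Z^r$. Now consider the projection $p : A = \calZ_0 \oplus (\Z^r\oplus A') \twoheadrightarrow \calZ_0$. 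I claim $p(B) = 0$: indeed for $b\in B$, write $b = b_0 + b_1$ with $b_0 \in \calZ_0$, $b_1 \in \Z^r \oplus A'$; then $b_0 = b - b_1 \in \calZ$ as well (since $\calZ = \Z^r\oplus\calZ_0 \supset \calZ_0$ and... actually one must be slightly careful), so it is cleaner to argue: the composite $B \hookrightarrow A \xrightarrow{p} \calZ_0$ has image a subgroup of $\calZ_0$, and since $B \cap \calZ = \calZ\cap B \subset \Z^r$ while $\Z^r \cap \calZ_0 = 0$, any $b\in B$ with $p(b)\neq 0$ would... The robust statement to use is simply: $p(B)$ is a subgroup of $\calZ_0$, and $p|_\calZ$ is the projection $\calZ \to \calZ_0$ which kills exactly $\Z^r \supset \calZ\cap B$; combining $B\cap\calZ \subseteq \Z^r$ with the freeness of $\calZ_0$, one shows $p(B)$ has infinite index, hence $\calZ_0 / p(B)$, which is a quotient of $A/B$, contains a $\Z^\infty$-summand; pulling back, $A/B = \Coker(\cdots)$ contains a $\Z^\infty$-summand. (The cleanest route: since $\calZ$ is a summand of $A$, $A/B$ surjects onto $\calZ/(\calZ\cap B)$, wait—that is false in general; instead use that $\calZ \hookrightarrow A \to A/B$ has kernel $\calZ\cap B$, so $\calZ/(\calZ\cap B) \cong \Z^\infty$ embeds in $A/B$, and because $\calZ$ is a \emph{summand} of $A$ one checks this embedded copy of $\Z^\infty$ is itself a summand of $A/B$.)

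The last parenthetical is the only point requiring care, so let me isolate it as the step I expect to be the main (minor) obstacle: showing that the image of $\calZ/(\calZ\cap B) \cong \Z^\infty$ inside $A/B$ is a direct \emph{summand}, not merely a subgroup. This follows because $\calZ$ splits off $A$, say $A = \calZ \oplus A'$, so $A/B$ has the $A'$-part available as a complement after one quotients $\calZ$ by $\calZ\cap B$; concretely, there is a surjection $A/B \to \calZ/(\calZ\cap B')$ where $B'$ is a finitely generated group, and $\Z^\infty$ summands are preserved under the relevant splittings. Once this is in hand, the ``In particular'' clause about smooth $C$ in the earlier theorems follows identically by noting $\pi_0(\TDiff_\del(W)) \to \pi_0(\TDiff_c(\Int W))$ is surjective (extend by the identity), so the image in $\pi_0(\TDiff(X'))_\ab$ is the same. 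Everything here is formal given Theorem~\ref{thm: intro stabilized 4-manifolds}; no Seiberg--Witten input is needed at this stage.
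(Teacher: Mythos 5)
The purely algebraic reduction you propose is unavailable: the lemma you rely on is false. Take $A = \calZ \oplus A'$ with $\calZ = \bigoplus_{i\geq 1}\Z e_i$ and $A' = \bigoplus_{i\geq 1}\Q\epsilon_i$, and $B = \langle e_i + \epsilon_i : i\geq 1\rangle$. Then $\calZ\cap B = 0$ is finitely generated, yet $A/B$ admits \emph{no} nonzero homomorphism to $\Z$ at all (any $\phi\colon A\to\Z$ kills the divisible group $A'$, and $\phi(B)=0$ then forces $\phi(e_i)=-\phi(\epsilon_i)=0$ for every $i$), hence certainly no $\Z^\infty$-summand; having a $\Z^\infty$-summand is equivalent to surjecting onto $\Z^\infty$, since $\Z^\infty$ is free. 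The step you correctly flag as the one requiring care --- that the embedded copy of $\calZ/(\calZ\cap B)\cong\Z^\infty$ in $A/B$ is a \emph{summand} ``because $\calZ$ splits off $A$'' --- is precisely what breaks: writing $A = \Z^m \oplus \calZ_0 \oplus A'$ with $\calZ\cap B\subset\Z^m$, the projection onto $\calZ_0$ does kill $\calZ\cap B$ but need not kill $B$, since an element of $B$ can have nonzero $\calZ_0$-component while escaping $\calZ$ through the $A'$-direction.

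The corollary instead rests on a stronger fact visible only in the \emph{proof} of Theorem~\ref{thm: intro stabilized 4-manifolds}, not in its statement. The homomorphisms $\psi_i\colon \pi_0(\TDiff(X'))_\ab \to \Z$, built from $\FSW(X',\fraks_i',-)$ after conjugating by $\Phi$, which exhibit $\calZ(\alpha)$ as a $\Z^\infty$-summand, actually vanish on the \emph{entire} subgroup $B = \im\bigl(\pi_0(\TDiff_c(X'\setminus\Sigma))_\ab\to\pi_0(\TDiff(X'))_\ab\bigr)$ once $i$ exceeds a threshold $i_0(g(\Sigma))$. Indeed, $\FSW$ factors through the abelianization, every class in $B$ is represented there by a diffeomorphism supported in $X'\setminus\Sigma$, and the families adjunction inequality (Theorem~\ref{thm: family adjunction}) forces $\FSW(X',\fraks_i',-)$ to vanish on such a diffeomorphism as soon as $|c_1(\fraks_i')\cdot[\Sigma']|+[\Sigma']^2 > 2g(\Sigma)-2$, a quantity the proof shows grows linearly in $i$. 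Hence $\bigoplus_{i>i_0}\psi_i$ descends to $A/B$, and the triangular relations \eqref{eq: ii}--\eqref{eq: ij} show that the image of $\langle f_i : i>i_0\rangle$ there is a $\Z^\infty$-summand. This extra input is Seiberg--Witten-theoretic, not formal. (Also, your closing remark about an ``In particular'' clause for smooth $C$ belongs to Theorems~\ref{cor: intro stabilized 4-manifolds homology disk} and~\ref{cor: intro Dehn twists}, not to the corollary at hand.)
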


Although the number of stabilizations needed in \cref{thm: intro stabilized 4-manifolds} is in general unknown, we see that one stabilization is enough for many examples.
This covers all known examples of 4-manifolds for which infinite generation of the group of exotic diffeomorphisms of 4-manifolds was proven by families Seiberg--Witten theory:

\begin{thm}
\label{thm: intro stabilized 4-manifolds non-negative}
Let $X$ be a simply-connected closed smooth 4-manifold with $b^+(X)\geq2$.
Suppose that there is a spin$^c$ structure $\fraks$ with formal dimension 0 and $\SW(X,\fraks) \neq 0$ mod 2.
Suppose further that $X$ contains the nucleus $N(2)$.
Set $X' := X\#S^2\times S^2$.
Then, for any non-zero homology class $\alpha \in H_2(X';\Z)$ with $\alpha^2\geq 0$, there exists a $\Z^\infty$-summand $\calZ(\alpha)$ of  $\pi_0(\TDiff(X'))_{\ab}$ such that any infinite-rank subgroup of $\calZ(\alpha)$ does not localize to $X'\setminus \Sigma$,
for any oriented, closed, connected, and smoothly embedded surface $\Sigma \subset X'$ that represents  $\alpha$.
Furthermore, we can take $\calZ(\alpha)$ to depend only on the ray $\Z\cdot \alpha \subset H_2(X';\Z)$.
\end{thm}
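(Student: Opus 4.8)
The plan is to follow the same strategy as in the proof of the general result Theorem~\ref{thm: intro stabilized 4-manifolds}, but keeping track of the arithmetic carefully enough to see that a single stabilization suffices under the stronger hypotheses. The core input is the families adjunction inequality of Baraglia~\cite{B202}: if $f$ is a diffeomorphism supported in $X' \setminus \Sigma$ with $\Sigma$ embedded representing $\alpha$, then the $1$-parameter families Seiberg--Witten invariant $\FSW$ associated to a suitable loop of metrics/perturbations coming from $f$ must vanish whenever the genus of $\Sigma$ is too small relative to $\langle c_1(\fraks), \alpha\rangle$ and $\alpha^2$; conversely, if we can produce a mapping torus whose families invariant is nonzero, then $f$ cannot be isotoped off $\Sigma$. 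So the first step is to construct, for the given $\fraks$ with $\SW(X,\fraks)\neq 0 \bmod 2$ and $\dim=0$, an infinite family of spin$^c$ structures $\{\fraks_i\}$ on $X' = X\#S^2\times S^2$ of formal dimension $1$ (obtained by modifying $\fraks$ using the $S^2\times S^2$ summand and the nucleus $N(2)$), together with diffeomorphisms $f_i$ of $X'$ — built out of the Ruberman-type construction, using the fishtail/nucleus $N(2)$ to host the relevant diffeomorphism as in \cite{Rub98,BK20gluing,iida2022diffeomorphisms} — such that the gluing formula for families Seiberg--Witten invariants (\cite{BK20gluing}, \cite{lin2022family}) computes $\FSW(X',\fraks_i, f_i)$ in terms of the ordinary $\SW(X,\fraks)$, hence is nonzero mod $2$.

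The second step is linear-algebraic: I would show these $f_i$ span a $\Z^\infty$-summand $\calZ(\alpha)$ of $\pi_0(\TDiff(X'))_\ab$. The standard device here (as in \cite{Rub99, konno2023homology}) is that the total families Seiberg--Witten invariant, summed appropriately over the conjugate pairs of spin$^c$ structures indexed by $i$, defines a homomorphism $\pi_0(\TDiff(X'))_\ab \to \bigoplus_i \Z/2$ (or to $\prod/\bigoplus$) under which the $f_i$ map to a linearly independent set; a section then exhibits the summand. The dependence only on the ray $\Z\cdot\alpha$ should come from the fact that the adjunction obstruction only sees $\langle c_1, \alpha\rangle$ and $\alpha^2$, both of which scale predictably, so one can choose the same $\calZ$ for $\alpha$ and $n\alpha$ after possibly enlarging which $\fraks_i$ are used.

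The third and genuinely delicate step is the non-localization conclusion itself: given any $\Sigma$ representing $\alpha$ with $\alpha^2\geq 0$, I must show the intersection \eqref{eq: Z cap intro} is finitely generated, i.e. that only finitely many of the $f_i$ (up to the subgroup they span) can be supported in $X'\setminus\Sigma$. Here one feeds $\Sigma$ into Baraglia's families adjunction inequality: a class $z\in\calZ(\alpha)$ localizing to $X'\setminus\Sigma$ forces the vanishing of $\FSW(X',\fraks_i,\cdot)$ for all $i$ with $2g(\Sigma)-2 < \langle c_1(\fraks_i),\alpha\rangle + \alpha^2$ — and by construction the left-hand quantity $\langle c_1(\fraks_i),\alpha\rangle$ grows linearly in $i$ while $g(\Sigma)$ and $\alpha^2$ are fixed, so all but finitely many $\fraks_i$ are killed; since the nonvanishing invariants are exactly what detects $\calZ(\alpha)$, the localizable part is finitely generated. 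The main obstacle I anticipate is arranging the construction so that the stabilization count is exactly one: this requires that the nucleus $N(2)\subset X$ already supplies enough room both to house the Ruberman-type diffeomorphism and to realize the needed family of dimension-$1$ spin$^c$ structures after a single $S^2\times S^2$, and that the gluing formula genuinely reduces $\FSW(X',\fraks_i,f_i)$ to $\SW(X,\fraks)\bmod 2$ without extra correction terms — verifying the hypotheses of the gluing theorem of \cite{BK20gluing, lin2022family} in this exact configuration, and checking the adjunction inequality is applicable for every embedded $\Sigma$ with $\alpha^2\geq 0$ (the sign condition $\alpha^2\geq 0$ being precisely what makes the inequality bite), is where the real work lies.
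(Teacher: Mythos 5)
Your overall strategy matches the paper's: build a $\Z^\infty$-summand from diffeomorphisms $f_i$ with nonvanishing families Seiberg--Witten invariants for an infinite list of spin$^c$ structures $\fraks_i$ differing by multiples of the fiber class $\delta$ of $N(2)$, and then kill all but finitely many of the $f_i$ by Baraglia's families adjunction inequality applied to a surface $\Sigma$ representing $\alpha$. However, there is a genuine gap at the critical step. You assert that ``by construction the left-hand quantity $\langle c_1(\fraks_i),\alpha\rangle$ grows linearly in $i$'', but for general $\alpha$ this is simply false: the spin$^c$ structures $\fraks_i$ produced from the nucleus construction differ only by multiples of $\delta$, so if $\alpha\cdot\delta = 0$ then $\langle c_1(\fraks_i),\alpha\rangle$ is independent of $i$, the adjunction bound never grows, and the argument proves nothing. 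What the paper does, and what your proposal omits, is to first invoke Auckly's lemma together with Wall's theorem to produce a self-diffeomorphism $\Phi$ of $X'$ carrying $\alpha$ to a class of the form $2A_1(\sigma+\delta)+2A_1A_2\delta$ with $A_1>0$ supported in the nucleus's hyperbolic summand (the hypothesis $\alpha^2\geq 0$ enters exactly here, controlling the sign of $A_2$), and then to define $\calZ(\alpha)$ as generated by the conjugates $f_i = \Phi^{-1}\circ g_i\circ\Phi$. The adjunction inequality is then applied to $\Phi(\Sigma)$, whose class pairs with $\delta$ as $2A_1\neq0$, and that is precisely what makes the bound grow linearly in $i$. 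This conjugation is also what makes $\calZ(\alpha)$ depend on $\alpha$ (through $\Phi$), and the ``ray'' statement follows because $\Phi$ can be taken to work simultaneously for all multiples $n\alpha$, with the bound scaling by $|n|$.

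Two smaller points: the families Seiberg--Witten invariant used here is defined for spin$^c$ structures of formal dimension $-1$, not $+1$; and the gluing formula applied to $g_i = \phi_i^{-1}(\id\#f_0)\phi_i\cdot(\id\#f_0)$ yields $\FSW(X',\fraks_i',g_i) = \SW(M_i,\fraks_i)+\SW(X,\phi_i^1(\fraks_i))$ mod $2$, so there \emph{is} a correction term that must be shown to vanish, which it does by applying the ordinary adjunction inequality in $X$ to the classes $c_1(\fraks)+2i\delta$.
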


Thanks to \cref{thm: intro stabilized 4-manifolds non-negative}, we can give explicit examples of the ambient 4-manifolds $X'$ for which we have non-localization results:

\begin{ex}
\label{ex: elliptic surface}
For $n\geq 2$, 
$X = E(n)$ satisfies this assumption of \cref{thm: intro stabilized 4-manifolds non-negative}.
Indeed, $K3=E(2)$ contains disjoint three copies of $N(2)$, and $E(n)$ for $n>2$ is constructed as a fiber sum of $E(2)$ and $E(n-2)$ along a torus in one of $N(2)$.
Thus $E(n)$ contains (at least two copies of) $N(2)$.
\end{ex}

Non-localization to any topological rational homology disks and homology cylinders holds for this once-stabilized 4-manifold (see \cref{section Proof of non-localizability} for the proofs):

\begin{cor}
\label{cor: intro stabilized 4-manifolds non-negative}
Let $X'$ be as in \cref{thm: intro stabilized 4-manifolds non-negative}.  Then the conclusions of
\cref{cor: intro stabilized 4-manifolds homology disk} (non-localization to homology disks) and \cref{cor: intro Dehn twists} (non-localization to homology cylinders) hold for $X'$.
\end{cor}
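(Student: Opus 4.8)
The plan is to deduce the Corollary from \cref{thm: intro stabilized 4-manifolds non-negative} by exactly the same geometric reduction that produces \cref{cor: intro stabilized 4-manifolds homology disk} and \cref{cor: intro Dehn twists} out of \cref{thm: intro stabilized 4-manifolds}: one exhibits, inside any embedded topological rational homology disk (resp. rational homology cylinder) $W \subset X'$, enough room to slide a homologically essential smoothly embedded surface $\Sigma$ entirely off of $W$, and then invokes non-localization to the surface complement $X' \setminus \Sigma$. Concretely, fix the class $\alpha_0 := [S^2 \times \mathrm{pt}] \in H_2(X';\Z)$ coming from the stabilizing $S^2\times S^2$ summand; it is non-zero with $\alpha_0^2 = 0 \geq 0$, so \cref{thm: intro stabilized 4-manifolds non-negative} applies and yields a $\Z^\infty$-summand $\calZ := \calZ(\alpha_0)$ of $\pi_0(\TDiff(X'))_{\ab}$, with the ray-invariance $\calZ(\alpha_0) = \calZ(n\alpha_0)$ for all $n \neq 0$. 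I claim this single $\calZ$ witnesses both conclusions for $X'$.

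The geometric input is: for any locally flatly embedded compact codimension-$0$ topological submanifold $W \subset X'$ that is a topological rational homology disk or rational homology cylinder, and for some $n \geq 1$, the class $n\alpha_0$ is represented by an oriented, closed, connected, \emph{smoothly} embedded surface $\Sigma \subset X' \setminus W$. To see this, note that in either case $H_1(W;\Q) = H_2(W;\Q) = 0$, and by Lefschetz duality $\del W$ is a disjoint union of rational homology $3$-spheres, so $H_1(\del W;\Q) = 0$. Writing $X'' := \overline{X' \setminus W}$ and using a bicollar of $\del W$ (available since $\del W$ is locally flat), the Mayer--Vietoris sequence gives that $H_2(X'';\Q) \to H_2(X';\Q)$ is surjective; hence the image of $H_2(X'';\Z) \to H_2(X';\Z)$ has finite index, and we may pick $n \geq 1$ with $n\alpha_0$ in that image. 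A $2$-cycle representing a lift of $n\alpha_0$ sits in a compact subset of the open manifold $X' \setminus W = \Int(X'')$, hence inside some smooth compact codimension-$0$ submanifold $M$ with $M \subset X' \setminus W$, and it defines a class $\beta_M \in H_2(M;\Z)$ mapping to $n\alpha_0$ in $H_2(X';\Z)$. Since $M$ is a genuine smooth $4$-manifold, $\beta_M$ is realized by a smooth map of a closed oriented surface ($\Omega_2^{SO}(M) \cong H_2(M;\Z)$); perturbing to a generic immersion, resolving the transverse double points by tubing, and tubing components together produces the desired smoothly embedded connected $\Sigma \subset \Int(M) \subset X' \setminus W$ with $[\Sigma] = n\alpha_0$.

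Granting this, the conclusion is immediate: since $\Sigma \cap \Int(W) = \emptyset$, any diffeomorphism supported in $\Int(W)$ extends by the identity to an element of $\TDiff_c(X' \setminus \Sigma)$, so if an infinite-rank subgroup of $\calZ$ localized to $W$ it would localize to $X' \setminus \Sigma$ as well; but $\calZ = \calZ(n\alpha_0)$ and $[\Sigma] = n\alpha_0$, so \cref{thm: intro stabilized 4-manifolds non-negative} rules this out. Specializing $W$ to topological rational homology $4$-disks gives the conclusion of \cref{cor: intro stabilized 4-manifolds homology disk} for $X'$, and specializing $W$ to topological rational homology cylinders gives that of \cref{cor: intro Dehn twists}; in the smooth case the cokernel assertions follow by the standard bookkeeping, namely that the subgroup of the $\Z^\infty$-summand $\calZ$ lying in $\im(\pi_0(\TDiff_\del(W))_{\ab} \to \pi_0(\TDiff(X'))_{\ab})$ has finite rank, so $\calZ$ descends to a $\Z^\infty$-summand of the cokernel.

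The only point requiring genuine care is the passage, for \emph{topological} $W$, from ``a multiple of $\alpha_0$ is carried by the topological complement $X''$'' to ``a multiple of $\alpha_0$ is carried by a smooth embedded surface disjoint from $W$'': because $W$ is only locally flatly embedded, $X''$ need not be smooth, so one cannot apply smooth transversality there directly, and one must first push the cycle into a truly smooth compact submanifold $M$ of the open set $X' \setminus W$ and smooth it inside $M$, as above. Everything else — the Mayer--Vietoris computation, the existence of bicollars for locally flat boundaries, and the $\Z^\infty$-summand bookkeeping for cokernels — is routine, and the substance of the statement is entirely contained in \cref{thm: intro stabilized 4-manifolds non-negative}.
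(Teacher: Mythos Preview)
Your proof is correct and follows essentially the same approach as the paper's: pick a class $\alpha$ with $\alpha^2 \geq 0$, use Mayer--Vietoris to find a multiple of $\alpha$ carried by the complement of $W$, realize it by a smooth surface there, and invoke \cref{thm: intro stabilized 4-manifolds non-negative} together with the ray-invariance $\calZ(\alpha)=\calZ(n\alpha)$. The only cosmetic differences are that the paper chooses $\alpha$ with $\alpha^2>0$ (rather than your explicit $\alpha_0=[S^2\times\mathrm{pt}]$ with square zero) and appeals to its \cref{lem: surface representative} to produce the smooth surface directly in the open smooth manifold $X'\setminus W$, which achieves in one stroke what you reconstruct via the compact intermediate $M$.
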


\begin{ex}
\label{ex: intro reduction}
It follows from \cref{cor: intro stabilized 4-manifolds non-negative} and \cref{ex: elliptic surface} that the conclusions of
\cref{cor: intro stabilized 4-manifolds homology disk,cor: intro Dehn twists} hold for
\[
X' = E(n)\#k\overline{\CP}^2\#S^2\times S^2
\]
with $n\geq2$, $k\geq 0$.
\end{ex}

\subsection{Other remarks}

\begin{rmk}
All exotic diffeomorphisms that appear in this paper are trivial after a single stabilization by $S^2 \times S^2$.
In fact, the $\Z^\infty$-summand $\calZ(\alpha)$ in \cref{thm: intro stabilized 4-manifolds,thm: intro stabilized 4-manifolds non-negative} we shall construct is generated by exotic diffeomorphisms $\{f_i\}_{i=1}^\infty$ for which each $f_i$ is smoothly isotopic to the identity of $X'\#S^2\times S^2$ (after extending $f_i$ to $X'\#S^2\times S^2$ by isotopy. See \cite[Theorem 5.3]{auckly2015stable} for this extension).
\end{rmk}

\begin{rmk}
\label{rmk: vs. families gluing}
Our main technical tools are families adjunction inequality by Baraglia \cite{B202} and the families gluing theorems by Baraglia and the first author \cite{BK20gluing} and Lin~\cite{lin2022family}. We emphasize that for certain non-localization theorems such as Theorem~\ref{cor: intro stabilized 4-manifolds homology disk} and Theorem~\ref{thm: intro stabilized 4-manifolds} we use families adjunction inequality, while it is also possible to approach them using the families gluing theorems. Indeed, this leads to certain salient features of our results that seem somewhat out of reach using the if one used families gluing instead. We highlight some of those features below:
\begin{enumerate}
\item As we have seen, many of our results obstruct localization even to topological submanifolds.
On the other hand, a gluing argument works only for smooth objects.
\item The proof of \cref{thm: intro stabilized 4-manifolds} gives a quantitative estimate on which diffeomorphisms belonging to $\calZ(\alpha)$ do not localize to a surface complement. This remark applies to several other similar results in this paper.
A gluing argument seems to require the use of abstract finiteness on the number of spin$^c$ structures with non-zero cobordism maps, which gives no explicit estimate.
\end{enumerate}
\end{rmk}


\noindent\textbf{Organization.}
This article is organized as follows.
In \cref{section Families Seiberg--Witten invariant}, we summarize tools from families Seiberg--Witten theory that shall be used to prove various non-localization results.
In \cref{section localization}, we shall prove the localization result, (i) of Theorem~\ref{thm: localization intro}, which shall be also an ingredient of \cref{thm: localization intro Diff}.
All non-localization results are proven in \cref{section Proof of non-localizability}, except for certain non-existence results for universal families corks, and exotic Dehn twist which are proven in 
\cref{senction Proof of the results on universal families corks}.

\noindent\textbf{Acknowledgment.}
It is our pleasure to thank Masaki Taniguchi for numerous helpful discussions which helped to shape this article. 
We are also grateful to Anubhav Mukherjee and Kristen Hendricks for their comments on an earlier version of the paper and Irving Dai for helpful discussions. We also thank Mark Powell for motivating conversations and the authors of \cite{krushkal2024corks} for freely sharing their results. We thank Rutgers University-New Brunswick and MIT for hosting HK and AM respectively for seminars where this article was initiated. HK was partially supported by JSPS KAKENHI Grant Number 21K13785. AM was partially supported by NSF DMS-2019396. AM also thanks IMPAN, Poland; where part of the work was done.

\section{Families Seiberg--Witten invariant}
\label{section Families Seiberg--Witten invariant}

\subsection{Basics}
\label{subsection Families Seiberg--Witten invariant}

First, we recall the basics of the families Seiberg--Witten invariant (see such as \cite{LiLiu01,Rub98,BK20gluing} for details).
Let $X$ be a closed oriented smooth 4-manifold with $b^+(X) \geq 3$.
Let $\fraks$ be a spin$^c$ structure on $X$ with $d(\fraks)=-1$, where
$d(\fraks)$ denotes the formal dimension of the Seiberg--Witten moduli space:
\[
d(\fraks)
= \frac{1}{4}(c_1(\fraks)^2-2\chi(X)-3\sigma(X)).
\]
Let $\Diff(X,\fraks)$ denote the group of orientation-preserving diffeomorphisms that preserve the isomorphism class of $\fraks$.
For each $f \in \Diff(X,\fraks)$, we can define the {\it families Seiberg--Witten invariant}
\[
\FSW(X,\fraks,f) \in \Z/2.
\]

The quantity $\FSW(X,\fraks,f)$ is defined by counting the parameterized moduli space: one can consider a family of Seiberg--Witten equations on the mapping torus $E_f \to S^1$ of $f$, by picking a fiberwise metric and fiberwise perturbation on $E_f$.
The parameterized moduli space of the solutions to the parameterized equations is generically a compact 0-dimensional manifold, so we can count it in mod 2 and get a value in $\Z/2$.
The condition $b^+(X) \geq 3$ guarantees that the count is independent of choices, and we can get a well-defined invariant $\FSW(X,\fraks,f) \in \Z/2$.

The invariant $\FSW$ 
satisfies the additivity under compositions (\cite[Lemma 2.6]{Rub98}) and the isotopy invariance (\cite[Lemma 2.7]{Rub98}), and as a result we obtain a homomorphism
\begin{align}
\label{eq: homomorphism Diff sO mod 2}
\FSW(X,\fraks,-) : \pi_0(\Diff(X,\fraks)) \to \Z/2.
\end{align}

Further, we can upgrade $\FSW(X,\fraks,f)$ to a $\Z$-valued invariant in the following situation.
Let $\calO$ be a homology orientation, i.e. an orientation of the vector space $H^1(X;\R) \oplus H^+(X)$.
Here $H^+(X;\R)$ denotes a maximal-dimensional positive-definite subspace of $H^2(X;\R)$, which is known to be unique up to isotopy.
Let $\Diff(X,\fraks,\calO)$ denote the subgroup of $\Diff(X,\fraks)$ that preserves homology orientation.
If $f$ lies in $\Diff(X,\fraks,\calO)$, the parameterized moduli space is canonically oriented, so that we can get
\[
\FSW(X,\fraks,f) \in \Z
\]
by counting the parameterized moduli space over $\Z$.
Again, this count is independent of choices thanks to $b^+(X) \geq 3$.
Strictly speaking, the sign of $\FSW(X,\fraks,f)$ depends on a choice of $\calO$, but we usually drop $\calO$ from our notation.
If $f$ preserves the homology orientation of $X$, we interpret the invariant $\FSW(X,\fraks,f)$ as a $\Z$-valued invariant unless otherwise stated.

Additivity under compositions holds over $\Z$, thus we obtain a homomorphism
\begin{align}
\label{eq: homomorphism Diff sO}
\FSW(X,\fraks,-) : \pi_0(\Diff(X,\fraks,\calO)) \to \Z,
\end{align}
which descends to \eqref{eq: homomorphism Diff sO mod 2} if we pass to $\pi_0(\Diff(X,\fraks))$.

Set $\TDiff(X,\fraks) = \TDiff(X)\cap \Diff(X,\fraks)$.
If $X$ is simply-connected so that a spin$^c$ structure is determined by its first Chern class, we have $\TDiff(X,\fraks) = \TDiff(X)$.
Also, since we have $\TDiff(X,\fraks) \subset \Diff(X,\fraks,\calO)$, we can get a $\Z$-valued invariant $\FSW(X,\fraks,f) \in \Z$ for $f \in \TDiff(X,\fraks)$.
Restricting \eqref{eq: homomorphism Diff sO}, we get a homomorphism
\begin{align}
\label{eq: homomorphism TDiff}
\FSW(X,\fraks,-) : \pi_0(\TDiff(X,\fraks)) \to \Z.
\end{align}

\subsection{Properties}
\label{subsection Families Seiberg--Witten invariant}

Now we collect a few properties of families Seiberg--Witten invariants we shall use.

The first property is the finiteness on families basic classes:

\begin{pro}
\label{prop: finiteley many families basic classes}
Let $X$ be a smooth closed oriented 4-manifold with $b^+(X) \geq 3$.
Given an orientation-preserving diffeomorphism $f$ of $X$,
there are at most finitely many spin$^c$ structures $\fraks$ with $d(\fraks)=-1$ such that $\FSW(X,\fraks,f)\neq0$.
\end{pro}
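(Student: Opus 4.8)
The plan is to prove that for a fixed orientation-preserving diffeomorphism $f$ of $X$, the set of $\spinc$ structures $\fraks$ with $d(\fraks) = -1$ and $\FSW(X,\fraks,f) \neq 0$ is finite, by combining a compactness/energy bound for the parameterized moduli space with the standard finiteness phenomenon for ordinary Seiberg--Witten basic classes. The key point is that the family of Seiberg--Witten equations used to define $\FSW(X,\fraks,f)$ lives over the mapping torus $E_f \to S^1$, which is compact, so we can choose a single fiberwise metric $g$ and fiberwise perturbation $\eta$ once and for all, independent of $\fraks$.

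First I would fix such a fiberwise metric $g$ and fiberwise perturbation $\eta$ on $E_f \to S^1$. For the parameterized moduli space $\calM(X,\fraks,f,g,\eta)$ to be nonempty, there must be some fiber $X_t$ (for $t \in S^1$) carrying an irreducible solution to the $(g_t,\eta_t)$-perturbed Seiberg--Witten equations for $\fraks$. The Weitzenböck formula gives the usual a priori $C^0$ bound on the spinor in terms of $\sup |s_{g_t}|$ and $\sup|\eta_t|$, and since $S^1$ is compact these suprema are uniformly bounded over $t$ by some constant depending only on $(g,\eta)$, hence independent of $\fraks$. This yields a curvature bound $\|F_{A}^{+}\|_{L^2} \leq K$ with $K = K(g,\eta)$ uniform in $\fraks$. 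Next I would convert this into a constraint on $c_1(\fraks)$: writing $c_1(\fraks)^2 = 2\chi(X) + 3\sigma(X) + 4d(\fraks)$ and using $d(\fraks) = -1$, the square $c_1(\fraks)^2$ is a fixed integer $c := 2\chi(X)+3\sigma(X)-4$; meanwhile the $L^2$ curvature bound together with the fixed metric bounds the self-dual part $(c_1(\fraks)^+)^2$ from above (in absolute value), while $c_1(\fraks)^2 = (c_1(\fraks)^+)^2 - (c_1(\fraks)^-)^2$ forces $(c_1(\fraks)^-)^2$ into a bounded range as well. Since the intersection form on $H^2(X;\Z)/\mathrm{torsion}$ has signature type controlling the lattice geometry, only finitely many characteristic lattice vectors $c_1(\fraks)$ can satisfy both the fixed value $c_1(\fraks)^2 = c$ and a uniform bound on $\|c_1(\fraks)^+\|$; and since $X$ is (implicitly) assumed with finitely generated homology, torsion $\spinc$ ambiguities contribute only finitely many more.

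The main obstacle I anticipate is making the "only finitely many characteristic vectors with bounded positive part and fixed square" step fully rigorous in the families setting: one must be slightly careful that the bound on the positive part $c_1(\fraks)^+$ comes out uniform over the $S^1$-family of metrics rather than metric-by-metric, but this is exactly where compactness of $S^1$ and the continuity of the period map $t \mapsto H^+(X;g_t)$ are used — the positive cone varies continuously over a compact base, so a single constant works. An alternative, cleaner route avoiding any new analysis is to deduce this directly from the corresponding finiteness statement for the unparameterized invariant: if $\FSW(X,\fraks,f)\neq 0$ then, by the wall-crossing/gluing description of $\FSW$ or simply by nonemptiness of the parameterized moduli space, $\fraks$ must have nonempty (perturbed) Seiberg--Witten moduli space for $(X,g_t)$ for some $t$, and the set of such $\fraks$ (ranging over a compact family of metrics and perturbations of bounded size) is finite by the classical argument of Witten. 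I would present the proof via this second route for brevity, citing the standard compactness results from \cite{LiLiu01} or the references in \cref{subsection Families Seiberg--Witten invariant}, and remark that the uniformity over $S^1$ is the only nontrivial adaptation.
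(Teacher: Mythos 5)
Your proposal is correct and takes essentially the same approach as the paper: fix the compact one-parameter family of metrics and perturbations once and for all, note that compactness of the base gives uniform a priori bounds, and reduce to the classical finiteness of basic classes (the paper cites Morgan's book; you cite Witten's argument, which is the same). The paper in fact presents exactly your ``cleaner route'' and leaves the uniformity over the base as the standard adaptation, as you suggest.
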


\begin{proof}
This is a standard compactness argument.
The families Seiberg--Witten invariant $\FSW(X,\fraks,f)$ is the count of the parameterized moduli space for $\fraks$ and for a generic path $\{(g_t,\mu_t)\}_{t \in [0,1]}$ of metrics $g_t$ and $g_t$-self-dual 2-forms $\mu_t$ with $f^\ast(g_0,\mu_0)=(g_1,\mu_1)$.
Since this path is compact, just as in the proof of finiteness of basic classes for the usual Seiberg--Witten invariant (e.g. \cite[Theorem 5.2.4]{Mo96}), we have that there are at most finitely many spin$^c$ structures $\fraks$ on $X$ with a given formal dimension for which the parameterized moduli spaces for the path $\{(g_t,\mu_t)\}_{t \in [0,1]}$ are non-empty.
\end{proof}

The next property is a gluing formula proven by Baraglia and the first author \cite{BK20gluing}.
Let us summarize the setup.
Let $M$ be an oriented closed smooth 4-manifold with $b^+(M) \geq 2$ and $b_1(M)=0$.
Let $\fraks_M$ be a spin$^c$ structure on $M$ with $d(\fraks_M)=0$.
Let $\frakt$ denote the unique spin structure on $S^2 \times S^2$.
Let
\begin{align}
\label{eq: ingredient fnode}
f_0 \in \Diff_\del(S^2\times S^2 \setminus \mathrm{Int}(D^4))
\end{align}
be a relative diffeomorphism that satisfies $f_0^\ast=-1$ on $H^+(S^2 \times S^2) = \R(\mathrm{PD}[S^2 \times \{\mathrm{pt}\}]+\mathrm{PD}[\{\mathrm{pt}\} \times S^2]) \subset H^2(S^2\times S^2)$.
Set $(X,\fraks) := (M,\fraks_M)\#(S^2\times S^2,\frakt)$.
Note that $\id_{M}\#f_0 \in \Diff(X,\fraks)$ and $d(\fraks)=-1$, so we can define $\FSW(X,\fraks,\id_{M}\#f_0) \in \Z/2$.
A gluing formula we use is:

\begin{thm}[{\cite[Theorem 9.5]{BK20gluing}}]
\label{thm: BK gluing}
We have 
\[
\FSW(X,\fraks,\id_M\#f_0)
= \SW(M,\fraks_M)
\]
in $\Z/2$.
\end{thm}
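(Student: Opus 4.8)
The plan is to compute $\FSW(X,\fraks,\id_M\#f_0)$ by analyzing the parameterized Seiberg--Witten moduli space over the mapping torus $E_{\id_M\#f_0} \to S^1$ and identifying it with the ordinary Seiberg--Witten moduli space of $(M,\fraks_M)$. The conceptual picture is that the family of equations on $X = M \# (S^2\times S^2)$ can be set up using a neck-stretching parameter that pinches off the $S^2\times S^2$-summand, so that solutions concentrate on $M$. First I would choose the fiberwise Riemannian metric on $E_{\id_M\#f_0}$ to be a connected-sum metric with a long neck of length $T$ joining $(M,\fraks_M)$ and $(S^2\times S^2,\frakt)$, arranged so that the identification across $t=0$ and $t=1$ given by $\id_M\#f_0$ respects this product structure near the neck (this is possible since $f_0$ is supported away from the $D^4$ where the gluing occurs). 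Since $d(\fraks)=-1$ and $d(\fraks_M)=0$, a dimension count suggests the parameterized moduli space over $S^1$ for $(X,\fraks)$ should match the unparameterized moduli space for $(M,\fraks_M)$ once we understand the contribution of the $S^2\times S^2$ side.

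Next I would carry out the neck-stretching/gluing analysis. The key point is that $(S^2\times S^2,\frakt)$ has $b^+=1$ and a positive scalar curvature metric, so for the spin structure $\frakt$ the only reducible/irreducible solutions are governed by the wall-crossing structure; the diffeomorphism $f_0$ acts by $-1$ on $H^+(S^2\times S^2)$, which means that as we traverse the circle direction the period point on the $S^2\times S^2$ side crosses the wall exactly once. This wall-crossing on the $S^2\times S^2$-factor is precisely what converts a codimension-one count (the $\Z$-valued ordinary SW invariant of $M$, which for $b^+(M)\geq 2$ is a signed count of a 0-dimensional moduli space) into the parameterized count for $X$. Concretely, I would show that after gluing, each point of the ordinary SW moduli space $\mathcal{M}(M,\fraks_M)$ contributes exactly one point to the parameterized moduli space for $(X,\fraks,\id_M\#f_0)$, and conversely every solution in the parameterized moduli space, for $T$ large, converges to such a configuration (a solution on $M$ glued to the unique reducible on $S^2\times S^2$ at the wall). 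This matching of finite sets mod $2$ yields $\FSW(X,\fraks,\id_M\#f_0) = \SW(M,\fraks_M)$ in $\Z/2$.

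The main obstacle I expect is the gluing/transversality analysis at the wall-crossing: one must show that the parameterized moduli space for large neck length $T$ is transversely cut out and in bijection with $\mathcal{M}(M,\fraks_M)$, which requires an obstruction-bundle or implicit-function-theorem argument handling the reducible solution on the $S^2\times S^2$-side (since that factor has $b^+=1$, reducibles are unavoidable and the linearized operator there has a kernel/cokernel coming from $H^+$). Controlling this cokernel and verifying that the induced gluing map is an orientation-preserving (here, just mod-2 counting) diffeomorphism onto the parameterized moduli space is the technical heart. Since this is exactly the content of \cite[Theorem 9.5]{BK20gluing}, I would either cite that analysis directly or, if reproving, follow the standard families neck-stretching framework (as in \cite{BK20gluing}) adapted to the $S^1$-family: the compactness input is \cref{prop: finiteley many families basic classes}-type reasoning, and the gluing input is the standard Seiberg--Witten gluing theorem applied fiberwise with the wall-crossing parameter playing the role of the circle coordinate.
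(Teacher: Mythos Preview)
The paper does not prove this theorem at all: it is quoted verbatim as \cite[Theorem~9.5]{BK20gluing} and used as a black box. Your proposal correctly identifies this at the end, offering ``cite that analysis directly'' as one option, and that is precisely what the paper does. Your sketch of the wall-crossing/neck-stretching mechanism (the $S^2\times S^2$-factor with $b^+=1$ and $f_0$ acting by $-1$ on $H^+$ forcing a single wall-crossing that matches the parameterized count to the unparameterized count on $M$) is a faithful outline of the strategy in \cite{BK20gluing}, so there is no discrepancy here beyond the fact that no argument is expected in the present paper.
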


The next property of families Seiberg--Witten invariants is the families adjunction inequality due to Baraglia~\cite{B202}:

\begin{thm}[{\cite[Theorem 1.2]{B202}}]\label{thm: family adjunction}
Let $X$ be an oriented smooth closed 4-manifold with $b^+(X)\geq3$.
Let $\fraks$ be a spin$^c$ structure on $X$ with $d(\fraks)=-1$.
Let $f \in \Diff(X,\fraks)$ be a diffeomorphism with $\FSW(X,\fraks,f)\neq0$.
Let $i : \Sigma \hookrightarrow X$ be a closed connected oriented smoothly embedded surface with $g(\Sigma)>0$ and with $[\Sigma]^2\geq0$.
Suppose that the embeddings $i: \Sigma \hookrightarrow X$ and $f\circ i : \Sigma \hookrightarrow X$ are smoothly isotopic to each other.
Then we have
\[
2g(\Sigma)-2 \geq |c_1(\fraks)\cdot [\Sigma]|+[\Sigma]^2.
\]
\end{thm}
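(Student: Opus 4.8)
The plan is to prove \cref{thm: family adjunction} by combining the dimensional count of the (relative) parametrized Seiberg--Witten moduli space with the standard dimensional/energy estimates for the Seiberg--Witten equations on a 4-manifold containing an embedded surface of non-negative self-intersection, adapted to families. First I would recall that, by hypothesis, $\FSW(X,\fraks,f) \neq 0$, so the parametrized moduli space $\calM$ over a generic path of metrics and perturbations realizing $f^\ast(g_0,\mu_0)=(g_1,\mu_1)$ is a non-empty compact $0$-dimensional manifold; in particular there is a solution for \emph{some} metric in the family. The isotopy hypothesis that $i$ and $f\circ i$ are smoothly isotopic is what allows us to interpolate the surface $\Sigma$ along the family: one can arrange a smooth family of embeddings $i_t : \Sigma \hookrightarrow X$ over $S^1$ with $i_0 = i$, together with a fiberwise metric for which $i_t(\Sigma)$ has small tubular-neighbourhood geometry, so that the surface is honestly present in every fiber of the mapping torus.

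The key analytic input is the following: if $\Sigma \subset X$ is an embedded surface with $[\Sigma]^2 \geq 0$ and $g(\Sigma) > 0$, and if the adjunction inequality $2g(\Sigma)-2 \geq |c_1(\fraks)\cdot[\Sigma]| + [\Sigma]^2$ \emph{fails}, then by stretching the metric along $\Sigma$ (inserting a long neck $\Sigma \times [-T,T] \times S^1$ of large area or using a sequence of metrics that blows up the tubular neighbourhood) one forces the parametrized moduli space to become empty: the energy of any solution is controlled by $c_1(\fraks)\cdot[\Sigma]$ while the ``room'' for solutions is controlled by $2g(\Sigma)-2-[\Sigma]^2$, and a Weitzenböck/vanishing argument on the cylindrical region rules out solutions. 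This is exactly the families version of the Ozsváth--Szabó / Morgan--Szabó--Taubes argument that establishes the ordinary adjunction inequality, and it is carried out in Baraglia's paper \cite{B202}; the point is that one may choose the fiberwise metric in the definition of $\FSW$ freely, and for a sufficiently stretched choice the count is zero, contradicting $\FSW(X,\fraks,f)\neq 0$.

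In more detail, the steps I would carry out are: (1) use the isotopy between $i$ and $f\circ i$ to build, on the mapping torus $E_f \to S^1$, a smooth fiberwise-embedded copy of $\Sigma$; (2) choose a fiberwise metric which, near this fiberwise surface, looks like a long flat (or constant-curvature) neck whose length $T$ can be taken arbitrarily large, and a fiberwise perturbation supported away from the neck; (3) apply the fiberwise version of the standard curvature estimate on $\Sigma \times [-T,T]$: for a solution of the perturbed Seiberg--Witten equations restricted to the neck, the $L^2$ norm of the curvature and of the spinor are bounded in terms of $[\Sigma]^2$ and $c_1(\fraks)\cdot[\Sigma]$, and when the adjunction inequality is violated these bounds are incompatible with the existence of a solution for $T$ large; (4) conclude that for such a metric the parametrized moduli space is empty, hence $\FSW(X,\fraks,f)=0$, contradicting the hypothesis. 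Since the hypothesis $d(\fraks)=-1$ and $b^+(X)\geq 3$ guarantee $\FSW$ is a well-defined metric-independent count, emptiness for one allowable family of metrics forces the invariant to vanish.

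The main obstacle is step (3): making the neck-stretching estimate genuinely uniform over the circle of metrics. In the unparametrized case one stretches a single metric; here one must stretch a whole $S^1$-family coherently and ensure that the energy/curvature bounds degrade uniformly as $T\to\infty$, so that \emph{simultaneously} for all $t\in S^1$ there are no solutions. This requires a compactness argument for the family (the path of metrics and perturbations is compact, as in \cref{prop: finiteley many families basic classes}), combined with the observation that the violated-inequality estimate is open and scales correctly in $T$; fortunately all of this is precisely what Baraglia establishes in \cite[Theorem 1.2]{B202}, so for the purposes of this paper I would invoke that theorem directly. One should also be slightly careful that the surface $\Sigma$ may need to be pushed off itself and the self-intersection realized by parallel copies when $[\Sigma]^2 > 0$ (so that the relevant adjunction terms appear with the correct sign $|c_1(\fraks)\cdot[\Sigma]| + [\Sigma]^2$ rather than $c_1(\fraks)\cdot[\Sigma] - [\Sigma]^2$), and that $g(\Sigma)>0$ is exactly what is needed to have a nontrivial cylindrical Dirac operator to run the vanishing argument; these are handled in the same way as in the classical proof.
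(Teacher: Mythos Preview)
The paper does not give its own proof of \cref{thm: family adjunction}: it is quoted verbatim as \cite[Theorem~1.2]{B202} and used as a black box. So there is nothing in the paper to compare your proposal against. Your sketch of the neck-stretching argument is a reasonable outline of how such results are proved (and indeed you yourself note that you would ultimately ``invoke that theorem directly''), but for the purposes of this paper no proof is expected or supplied; the correct ``proof'' here is simply the citation.
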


\subsection{Infinite generation}
\label{subsection Infinite generation}

We give a procedure to get infinite generation of $\pi_0(\TDiff(X))$ for certain closed 4-manifolds $X$.
This is a Seiberg--Witten reformulation of Ruberman's argument \cite{Rub99} based on 1-parameter Yang--Mills theory\footnote{There is a slight difference from \cite{Rub99} in the construction. Ruberman~\cite{Rub99} used a diffeomorphism of $\CP^2\#2\overline{\CP}^2$ which is infinite order in $\pi_0(\Diff(\CP^2\#2\overline{\CP}^2))$, in place of $f_0 \in \Diff(S^2\times S^2)$ in our construction. The fact that one can use this simpler diffeomorphism $f_0$ (going back to \cite{BK20gluing}) is one of the advantages of families Seiberg--Witten theory compared with families Yang--Mills theory.}.
Similar ideas have been used in \cite{auckly2023smoothly,konno2023homology,Baraglia23mapping} for specific examples arising from logarithmic transformations, but with slightly different forms from us. 
For readers' convenience, we give an explicit statement and its proof in terms of infinite generation of Torelli groups in the most general setup where this argument works.

\begin{assum}
\label{assumption for infinite generation}
Let $M$ be a simply-connected closed smooth 4-manifold with $b^+(M)\geq 2$.
Suppose that the following holds:
\begin{enumerate}
\item[(i)] There are infinitely many smooth 4-manifolds $\{M_i\}_{i=1}^\infty$ that are homeomorphic to $M$.
\item[(ii)] For every $i$, there exists an orientation-preserving diffeomorphism $\phi_i : M\#S^2\times S^2 \to M_i\#S^2\times S^2$ such that
\[
(\phi_i)^\ast : H^2(M_i\#S^2\times S^2) \to H^2(M\#S^2\times S^2)
\]
is a direct sum of isomorphisms 
\[
\phi_i^1 : H^2(M_i) \to H^2(M)
\text{\ and\ }
\phi_i^2 : H^2(S^2\times S^2) \to H^2(S^2\times S^2).
\]

\item[(iii)] For every $i$, there exists a mod 2 basic class $\fraks_i$ on $M_i$ (i.e. $\SW(M_i,\fraks_i)\neq0$ mod 2)  with $d(\fraks_i)=0$ such that the spin$^c$ structures $\phi_i^1(\fraks_i)$ on $M$ are distinct for all $i$. 
\end{enumerate}

In (iii), we identify a spin$^c$ structure with its first Chern class.
\end{assum}

Using the diffeomorphism $f_0 \in \Diff_\del(S^2\times S^2 \setminus \mathrm{Int}(D^4))$ as in \eqref{eq: ingredient fnode}, we can define a diffeomorphism
\begin{align}
\label{eq: diffeo general}
g_i = \phi_i^{-1} \circ (\id_{M_i}\#f_0) \circ \phi_i \circ (\id_M\#f_0) : M\#S^2\times S^2 \to M\#S^2\times S^2. 
\end{align}
By construction, $g_i$ act trivially on $H_2(M\#S^2\times S^2)$.
Since we supposed $\pi_1(M)=1$, it follows from \cite{Q86,P86,gabai2023pseudoisotopies} that all $g_i$ are topologically isotopic to the identity. 

Throughout this article, we will use constructions where the Assumption~\ref{assumption for infinite generation} (or conditions similar to it) are met. For convenience, we will refer to the diffeomorphisms $\phi_i$'s as the \textit{mediating diffeomorphism}. We now give the archetypal argument for linear independence in the diffeomorphism groups:

\begin{thm}
\label{thm: infinite generation general}
Let $M$ be as in \cref{assumption for infinite generation} and set $X=M\#S^2\times S^2$.
Let $g_i : X \to X$ be diffeomorphisms defined by \eqref{eq: diffeo general}.
Then $\{g_i\}_{i=1}^\infty$  
generate a $\Z^\infty$-summand of $\pi_0(\TDiff(X))_\ab$.
\end{thm}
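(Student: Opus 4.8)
The plan is to use the homomorphism $\FSW(X,\fraks,-) : \pi_0(\TDiff(X)) \to \Z$ from \eqref{eq: homomorphism TDiff} to build a linear functional that detects the $g_i$, and then invoke the finiteness statement (\cref{prop: finiteley many families basic classes}) to turn a single $\FSW$-computation into genuine linear independence. First I would compute $\FSW(X,\fraks_i', g_i)$ for the spin$^c$ structure $\fraks_i'$ on $X=M\#S^2\times S^2$ obtained from $\phi_i^1(\fraks_i)$ on the $M$-summand and the unique spin structure $\frakt$ on $S^2\times S^2$; the claim is that this equals $\SW(M_i,\fraks_i)$ mod $2$, hence is nonzero. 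This should follow by combining the additivity of $\FSW$ under composition with \cref{thm: BK gluing}: writing $g_i = \phi_i^{-1}\circ(\id_{M_i}\#f_0)\circ\phi_i\circ(\id_M\#f_0)$, the conjugation-invariance of $\FSW$ under the mediating diffeomorphism $\phi_i$ (which acts on spin$^c$ structures by the block map $\phi_i^1\oplus\phi_i^2$, sending the relevant spin$^c$ structure to the one pulled back from $M_i$) lets us rewrite the mapping-torus count in terms of $\id_{M_i}\#f_0$ together with $\id_M\#f_0$; one then checks that the contribution of $\id_M\#f_0$ is "trivial" for the relevant spin$^c$ structure while $\id_{M_i}\#f_0$ contributes $\SW(M_i,\fraks_i)$ via \cref{thm: BK gluing}. (Here one must be a little careful: the clean statement is that $g_i$ composed with a reference diffeomorphism is isotopic to $\id_{M_i}\#f_0$-type maps, so the additivity bookkeeping is what carries the computation.)

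Next I would set up the linear-independence argument. Consider finitely many indices $i_1<\dots<i_k$ and suppose some nontrivial linear combination $\sum_j n_j [g_{i_j}]$ maps to a torsion element, equivalently (after passing to abelianization, which is automatic since $\FSW$ is already a homomorphism to an abelian group) lies in $\bigcap_{\fraks}\ker\FSW(X,\fraks,-)$ over all $\fraks$ with $d(\fraks)=-1$; I want to derive a contradiction. The key point is the choice in \cref{assumption for infinite generation}(iii) that the spin$^c$ structures $\phi_i^1(\fraks_i)$ on $M$ are pairwise distinct — this forces the $\fraks_i'$ on $X$ to be pairwise distinct. Evaluate the linear combination $g := \prod_j g_{i_j}^{n_j}$ (or rather the corresponding class) against $\FSW(X,\fraks_{i_\ell}',-)$ for a fixed $\ell$: by additivity this is $\sum_j n_j \FSW(X,\fraks_{i_\ell}', g_{i_j})$, and I would argue that the only surviving term is $j=\ell$, so the value is $\pm n_\ell \cdot(\text{nonzero})$, whence $n_\ell = 0$ for all $\ell$. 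The vanishing of the cross-terms $\FSW(X,\fraks_{i_\ell}', g_{i_j})$ for $j\neq\ell$ is where the distinctness of spin$^c$ structures enters, combined with an adjunction-type or direct-sum argument showing that $g_{i_j}$ can only have nonzero $\FSW$ for spin$^c$ structures "coming from $M_{i_j}$".

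To upgrade "$\Z$-linearly independent" to "$\Z^\infty$-summand of $\pi_0(\TDiff(X))_\ab$", I would assemble the $\FSW(X,\fraks_i',-)$ into a single homomorphism $\Phi : \pi_0(\TDiff(X))_\ab \to \bigoplus_i \Z$; a priori $\Phi$ need not land in the direct sum, but \cref{prop: finiteley many families basic classes} guarantees that for each fixed class only finitely many $\fraks_i'$ contribute, so $\Phi$ is well-defined into $\bigoplus_i\Z$ on the subgroup generated by the $g_i$ — and in fact the matrix of $\Phi$ restricted to $\langle g_1,g_2,\dots\rangle$ is, by the cross-term vanishing above, "upper triangular with nonzero diagonal" (after reordering), hence $\Phi(\langle g_i\rangle)$ is a finite-index subgroup of $\bigoplus_i\Z$, which is free, so the composite $\langle g_i\rangle \to \bigoplus_i\Z$ splits and $\langle g_i\rangle$ is a $\Z^\infty$-summand. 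The main obstacle I anticipate is the careful computation of $\FSW(X,\fraks_{i}', g_{i})$ and especially the vanishing of cross-terms: making the conjugation/additivity bookkeeping precise (tracking exactly which spin$^c$ structures are preserved by $\phi_i$ and how $\id_M\#f_0$ versus $\id_{M_i}\#f_0$ act) is the delicate part, whereas the passage from the $\FSW$-values to the summand statement is then formal linear algebra over $\Z$ using the finiteness proposition.
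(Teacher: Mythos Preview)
Your overall architecture matches the paper's proof exactly: compute $\FSW(X,\fraks_i',g_i)\neq 0$ via additivity plus \cref{thm: BK gluing}, get an ``upper-triangular with nonzero diagonal'' matrix after passing to a subsequence, and read off the $\Z^\infty$-summand from the collected homomorphism $\bigoplus_i \FSW(X,\fraks_i',-)$. However, two of your proposed mechanisms are not right, and the paper handles them differently.

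First, the diagonal. The additivity computation gives, over $\Z/2$,
\[
\FSW(X,\fraks_i',g_i) \;=\; \SW(M_i,\fraks_i) \;+\; \SW\bigl(M,\phi_i^1(\fraks_i)\bigr),
\]
so the contribution of $\id_M\#f_0$ is \emph{not} automatically ``trivial'': it equals $\SW(M,\phi_i^1(\fraks_i))$, which has no reason to vanish for any given $i$. The paper fixes this by invoking finiteness of \emph{ordinary} basic classes of $M$: since the $\phi_i^1(\fraks_i)$ are distinct, after passing to a subsequence one has $\SW(M,\phi_i^1(\fraks_i))=0$ for all $i$, and then the diagonal term is $\SW(M_i,\fraks_i)\neq 0$.

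Second, the off-diagonal. Your suggestion that cross-terms vanish by an ``adjunction-type or direct-sum argument showing that $g_{i_j}$ can only have nonzero $\FSW$ for spin$^c$ structures coming from $M_{i_j}$'' does not work in the generality of \cref{assumption for infinite generation}: there is no such structural constraint on the families basic classes of $g_j$. The paper instead uses \cref{prop: finiteley many families basic classes} \emph{directly for the vanishing}, not merely to make $\Phi$ land in the direct sum. For each fixed $j$, only finitely many spin$^c$ structures have $\FSW(X,\cdot,g_j)\neq 0$ over $\Z$; since the $\fraks_i'$ are distinct, $\FSW(X,\fraks_i',g_j)=0$ for all $i\gg j$, and a diagonal subsequence argument produces the upper-triangular form. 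Once you replace your two proposed justifications with these two subsequence arguments, your proof coincides with the paper's.
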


\begin{proof}
Using the identification between spin$^c$ structures with their first Chern classes,
define a spin$^c$ structure on $X$ by $\fraks_i' := \phi_i^1(\fraks_i)\#\frakt$, where $\frakt$ is the spin structure on $S^2\times S^2$.
We claim that
\begin{align}
\label{eq: Kronecker delta}
\begin{split}
&\FSW(X,\fraks_i',g_i) \neq 0 \text{\ in\ $\Z$ } (i \geq 1),\\
&\FSW(X,\fraks_i',g_j)=0 \text{\ in\ $\Z$ } (i>j\geq1),
\end{split}
\end{align}
after passing to a subsequence of $\{(\fraks_i',g_i)\}_{i=1}^\infty$.
First, let us check that it suffices to prove this claim.
Form a homomorphism
\begin{align}
\label{eq: collected homomorphism}
\bigoplus_{i=1}^\infty \FSW(X,\fraks_i',-) : \pi_0(\TDiff(X))_\ab \to \bigoplus_{i=1}^\infty \Z
\end{align}
by collecting \eqref{eq: homomorphism TDiff} for those spin$^c$ structures $\fraks_i'$.
It follows from \eqref{eq: Kronecker delta} that the image under \eqref{eq: collected homomorphism} of the subgroup of $\pi_0(\TDiff(X))_\ab$ generated by $\{g_i\}_{i=1}^\infty$ 
is a $\Z^\infty$-summand of the target.
This proves the assertion of the \lcnamecref{thm: infinite generation general}.

Next, we prove $\FSW(X,\fraks_i',g_i) \neq 0 \text{\ in\ $\Z$}$ for all $i$, as in the claim \eqref{eq: Kronecker delta}.
Recall that $\phi_i^1(\fraks_i)$ are supposed to be distinct, and there is a finiteness result on basic classes (e.g. \cite[Theorem 5.2.4]{Mo96}).
Thus, after passing to a subsequence, we can assume that $\SW(M,\phi_i^1(\fraks_i))=0$ for all $i$.
Using the homomorphism property of $\FSW$ and the conjugation invariance, we have the following equalities over $\Z/2$:
\begin{align}
\label{eq: FSW computation}
\begin{split}
&\FSW(X,\fraks_i',g_i)\\
=& \FSW(X,\fraks_i',\phi_i^{-1} \circ (\id_{M_i}\#f_0) \circ \phi_i)
+ \FSW(X,\fraks_i',\id_M\#f_0)\\
=& \FSW(M_i\#S^2\times S^2,\fraks_i\#\frakt, \id_{M_i}\#f_0)
+ \FSW(X,\fraks_i',\id_M\#f_0)\\
=& \SW(M_i,\fraks_i)
+ \SW(M,\phi_i^1(\fraks_i))
= 1+0=1.
\end{split}
\end{align}
Here we used \cref{thm: BK gluing} to get the third equality.
This, of course, implies $\FSW(X,\fraks_i',g_i)\neq0$ in $\Z$.

Now we see the remaining part of \eqref{eq: Kronecker delta}.
(Note that we have $\FSW(X,\fraks_i',g_j)=0$ in $\Z/2$ for $i>j$ by an argument similar to the above paragraph, but not necessarily in $\Z$.)
Firstly, note that $\fraks_i'$ are distinct since $\phi_i^1(\fraks_i)$ are distinct.
Hence it follows from \cref{prop: finiteley many families basic classes} that, for a fixed $j$, we have $\FSW(X,\fraks_i',g_j)=0$ in $\Z$ for all $i \gg j$.
Therefore, after passing to a subsequence of $\{(\fraks_i',g_i)\}_i$, we can assume that $\FSW(X,\fraks_i',g_j)=0$ in $\Z$ for $i>j$.
Thus we get the remaining part of \eqref{eq: Kronecker delta}.
This completes the proof.
\end{proof}

\subsection{Families gluing along $\neq S^3$}\label{sec: family_gluing}

We also need to recall a families gluing theorem due to Lin \cite{lin2022family}, which is a different type of gluing from the one by Baraglia and the first author \cite{BK20gluing} discussed above.
This shall be used to prove the results on universal families corks.

First, recall from \cite{lin2022family} that one can define the families cobordism map for diffeomorphisms.
Here we review the minimal setup we need.
Let $W$ be a smooth compact oriented 4-manifold with boundary $\del W = Y$.
For simplicity, we assume that $Y$ is an integral homology 3-sphere.
We denote by $\mathring{W}$ the punctured $W$;  $\mathring{W} = \mathring{W} \setminus \mathrm{Int}(D^4)$.
Let $\fraks$ be a spin$^c$ structure on $W$.
Let $f : M \to M$ be an orientation-preserving diffeomorphism with $f^\ast \fraks \cong \fraks$.
Suppose that $f$ acts trivially on homology.
Then we can define a families cobordism map
\begin{align}
\label{eq: HM hat map}
\HMhat_\ast(\mathring{W}, \fraks,f) : \HMhat_\ast(S^3) \to \HMhat_{\ast}(Y, \fraks|_Y),
\end{align}
which is a map between the ``from" version of monopole Floer homologies defined by Kronheimer--Mrowka \cite{KM07thebook}.
The degree of the map \eqref{eq: HM hat map} is given by
\begin{align}
\label{eq: deg of cob map}
\frac{c_1(\fraks)^2 -\sigma(W)}{4} - \frac{\chi(\mathring{W}) +\sigma(W)}{2} + 1.
\end{align}
If one drops the last term ``$+1$", this is the formula of the grading shift for the usual cobordism maps defined in \cite{KM07thebook}. This ``$+1$" comes from the dimension of the base space of the mapping torus of $f$.

To define \eqref{eq: HM hat map}, let us isotope $f$ near $D^4 \subset W$ and make it fixing $D^4$ pointwise.
Then the mapping torus $\mathring{W} \to E_f \to S^1$ of $f|_{\mathring{W}}$ is a smooth fiber bundle with fiber $\mathring{W}$ over $S^1$, equipped with a trivialization of the boundary family $\del E_f$.
Further, it follows from $f^\ast \fraks \cong \fraks$ that there is a fiberwise spin$^c$ structure on $E_f$ that restricts to $\fraks$ on the fiber.
For such a family, we can consider the family cobordism map defined in \cite[Proposition 4.5]{lin2022family}.
Note that homological triviality assumption is made in \cite{lin2022family}, so we restrict ourselves to the case that $f$ is topologically trivial.

The definition of \eqref{eq: HM hat map} could look to depend on the choice of the isotopy discussed above.
It is not hard to see this ambiguity actually does not affect \eqref{eq: HM hat map}.
However, we omit the proof, since what we use is only the pairing formula (\cref{thm: Lin's gluing} below), and it does not matter even if \eqref{eq: HM hat map} depends on some auxiliary data.

The families gluing theorem due to Lin~\cite{lin2022family} applied to our setup is as follows.
Let $W'$ be a smooth compact oriented 4-manifold with boundary $\del W = -Y$ and with $b^+(W')\geq 2$.
Set $X = W \cup_Y W'$.
Suppose that we have a spin$^c$ structure $\fraks'$ on $W'$, such that $\fraks'|_{\del W'} \cong \fraks|_{\del W}$, and $d(\fraks_X)=-1$ for $\fraks_X := \fraks \cup \fraks'$.
Now, extend $f \in \Diff_{\del}(W)$ to a diffeomorphism $f : X \to X$ by the identity, denoted by the same notation.
Then we can define the families Seiberg--Witten invariant $\FSW(X,f,\fraks_X) \in \Z$.

\begin{thm}[{\cite[Theorem L (1)]{lin2022family}}]
\label{thm: Lin's gluing}
In the above setup, we have
\begin{align*}
\FSW(X,f,\fraks_X)
 = \langle \HMhat_\ast(\mathring{W}, \fraks,f)(\hat{1}), \HMarrow^\ast(\mathring{W}',\fraks')(\check{1}) \rangle.
\end{align*}
\end{thm}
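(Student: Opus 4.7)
My approach would combine the standard Seiberg--Witten neck-stretching gluing along an integer homology $3$-sphere with a one-parameter family version that accounts for the extra $S^1$-direction coming from the mapping torus of $f$. The formula is a direct parameterized analogue of the unparameterized pairing formula $\SW(X,\fraks_X)=\langle \HMhat_\ast(\mathring{W},\fraks)(\hat{1}),\HMarrow^\ast(\mathring{W}',\fraks')(\check{1})\rangle$ of Kronheimer--Mrowka, and the proof is an upgrade of theirs to families over $S^1$.

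\textbf{Setup.} Since $f\in\Diff_\del(W)$ extends to $X$ by the identity on $W'$, and we may isotope $f$ to fix a small $4$-ball containing the gluing region, the mapping torus $E_f^X\to S^1$ of the extended diffeomorphism admits the natural separating codimension-$1$ sub-family $Y\times S^1$ and decomposes as
\[
E_f^X \;=\; E_f \;\cup_{\,Y\times S^1}\; (W'\times S^1),
\]
where $E_f$ is the very family used to define $\HMhat_\ast(\mathring{W},\fraks,f)$. I would equip $E_f^X$ with a fiberwise Riemannian metric containing a long cylindrical neck $Y\times[-T,T]\times S^1$, together with a fiberwise perturbation that is admissible and translation-invariant on the neck. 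Because $b^+(X)\ge b^+(W')\ge 2$, the count defining $\FSW(X,f,\fraks_X)$ is independent of $T$.

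\textbf{Bubbling analysis and identification.} Letting $T\to\infty$ and applying the family version of Kronheimer--Mrowka's compactness, the parameterized moduli space limits to broken configurations consisting of a finite-energy solution on the cylindrical-end family $E_f$ converging at the end to some critical point $\alpha$ of the perturbed Chern--Simons--Dirac functional on $(Y,\fraks|_Y)$, together with a finite-energy solution on the cylindrical-end cobordism $W'\times S^1$ converging to the same $\alpha$, possibly joined by broken trajectories over $Y\times\R$. Summed over all $\alpha$, the count of the first component is precisely the image of the canonical generator $\hat{1}\in\HMhat_\ast(S^3)$ under $\HMhat_\ast(\mathring{W},\fraks,f)$, which is exactly how that map is defined in \cite{lin2022family}. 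The count of the second component, read as an element of the dual complex, is $\HMarrow^\ast(\mathring{W}',\fraks')(\check{1})$. The ``$+1$'' in the degree formula \eqref{eq: deg of cob map} accounts for the one-dimensional base $S^1$, so that the dimension bookkeeping matches the Kronheimer--Mrowka pairing $\langle\,\cdot\,,\,\cdot\,\rangle$, and the identification follows.

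\textbf{Main obstacle.} The delicate points are analytic rather than formal: one must verify (i) parameterized transversality for the Seiberg--Witten equations over the $S^1$-family, where the perturbation space is constrained by the requirement of $f$-equivariance near $Y$; (ii) a family gluing/compactness package controlling every possible neck-breaking stratum, including the interaction with the canonical reducible critical point on $Y$; and (iii) a careful matching of signs and degree shifts, in particular the extra ``$+1$''. The hypothesis $b^+(W')\ge 2$ prevents reducibles on $W'$ from creating wall-crossing contributions, so the Floer-theoretic chain level computation truncates correctly. All three ingredients have been established by Lin \cite{lin2022family}, so in practice one would apply his general families gluing theorem after checking that our geometric data satisfy his hypotheses.
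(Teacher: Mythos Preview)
The paper does not supply its own proof of this statement: \cref{thm: Lin's gluing} is simply quoted from \cite[Theorem L (1)]{lin2022family} and used as a black box. So there is nothing in the paper to compare your proposal against.

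Your outline is a reasonable high-level sketch of the neck-stretching argument that underlies Lin's theorem, and you correctly identify where the analytic work lies (parameterized transversality, family compactness/gluing, and the degree bookkeeping producing the extra ``$+1$''). But as written it is not a proof: every substantive step is deferred to \cite{lin2022family}, and the final paragraph explicitly says ``in practice one would apply his general families gluing theorem after checking that our geometric data satisfy his hypotheses.'' That is exactly what the paper does---it cites Lin and moves on. If your goal was to reproduce the paper's treatment, you have done more than necessary; if your goal was to give an independent proof, the proposal is only a table of contents for one.
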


Here $\mathring{W'}$ is regarded as a cobordism from $Y$ to $S^3$, and $\HMarrow^\ast(\mathring{W}',\fraks') : \HMcheck^\ast(S^3) \to \HMhat^\ast(Y, \fraks'|_{\del W'})$ is the standard (i.e. non-families) ``arrow" flavor cobordism map defined in \cite{KM07thebook}.
The elements $\hat{1}$ and $\check{1}$ are generators of the $U$-towers in $\HMhat_\ast(S^3)$ and in $\HMcheck^\ast(S^3)$, respectively.
The pairing is taken between Floer (co)homology $\HMhat_\ast(Y,\fraks|_{\del W})$ and $\HMhat^\ast(Y,\fraks|_{\del W})$.

\section{Proof of localization}
\label{section localization}

In this section, we will prove (i) of Theorem~\ref{thm: localization intro}. For the proof, it will be important to have a light introduction to corks. 
\begin{defi}\label{def_cork}
A cork is a pair $(C,\tau)$, consisting of a 4-contractible manifold $C$ with boundary and an involution $\tau$
\[\tau: \partial C \rightarrow \partial C 
\]
which does not extend smoothly to $C$. We will refer to the action of $\tau$ as the \textit{cork-twist}.
\end{defi}

By the work of Freedman \cite{freedman1982topology}, it is known that the cork twist does extend over $C$ as a homeomorphism. The first example of a cork was exhibited by Akbulut \cite{akbulut1991fake}, see Figure~\ref{fig:mazur_cork}. The significance of cork-twist in the study of exotic smooth structure is demonstrated by the following result by Curtis--Freedman--Hsiang--Stong \cite{curtis1996decomposition} and Matveyev \cite{matveyev1996decomposition}, that any two exotic smooth structures on a simply-connected closed 4-manifold $X$ are related by a cork-twist, i.e. by the process of re-gluing an embedded cork $C$ by cork-twist $\tau$ along the boundary:
\[
X^{\tau}:= X \setminus \mathrm{int}(C) \cup_{\tau} C
\]
Note that since $\tau$ extends over $C$ topologically, there is a homeomorphism from $X^{\tau}$ to $X$.
In this article, we will also be interested in examples of corks for which the cork-twist \textit{does} extend over $C \# S^2 \times S^2$ smoothly. Indeed, it can be seen that most of the well-known corks have this property, that the `exotic' nature of the cork-twist diffeomorphism is diffused after one stabilization by $S^2 \times S^2$. Indeed, we have the following Proposition~\ref{pro: cork_stab}. 
\begin{figure}[h!]
\center
\includegraphics[scale=0.40]{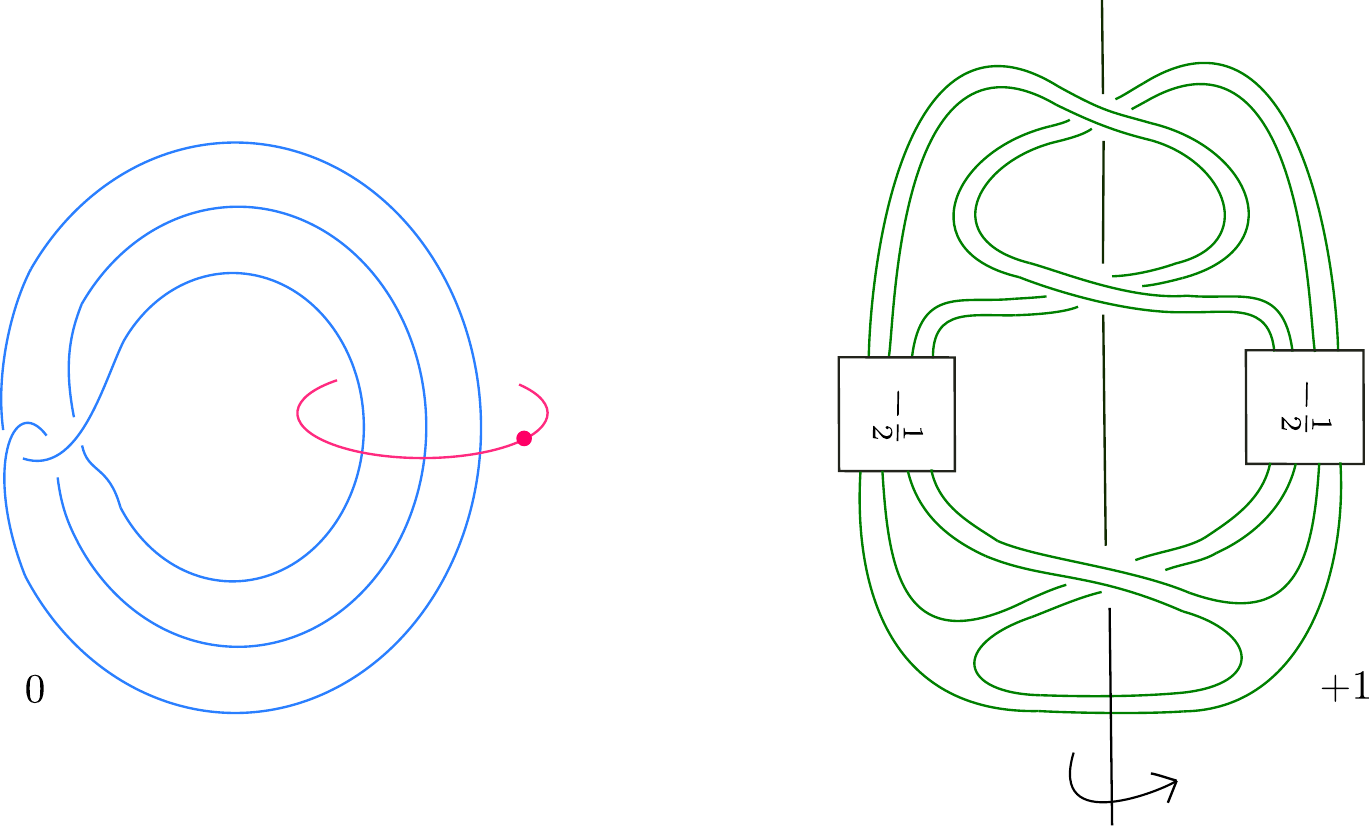}
\caption{Left: The Akbulut--Mazur cork, Right: The boundary of Akbulut--Mazur cork, as a surgery on a ribbon knot.}
\label{fig:mazur_cork}
\end{figure}

\begin{pro}\label{pro: cork_stab}
Let $(Y, \tau)$ be the  Akbulut--Mazur cork. Then $\tau$ extends as a diffeomorphism of $C \# S^2 \times S^2$. That is there is a diffeomorphism
\[
\tilde{\tau}: Y \# S^2 \times S^2 \rightarrow Y \# S^2 \times S^2.
\]
such that $\tilde{\tau}|_{\partial C}=\tau$.
\end{pro}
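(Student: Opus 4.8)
The statement is a concrete, hands-on fact about the Akbulut--Mazur cork $(Y,\tau)$: the cork involution $\tau$ of $\partial Y$, which famously does \emph{not} extend over $Y$ smoothly, \emph{does} extend over $Y\#S^2\times S^2$. The approach I would take is the standard Kirby-calculus one. Recall that $Y$ has a handle diagram consisting of a $0$-handle, a single $1$-handle (drawn as a dotted circle) and a single $2$-handle, and the cork twist $\tau$ is the involution of $\partial Y$ that, in this picture, interchanges the roles of the dotted $1$-handle circle and the $0$-framed $2$-handle circle (the two circles form a symmetric link, a clasp, so there is an obvious involution of $S^3$ carrying the link to itself with the two components swapped). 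To extend $\tau$ over the stabilization, I would show that after adding a cancelling $0$-framed $\pm 1$-framed Hopf pair — i.e. after connect-summing with $S^2\times S^2$ (or, as is often more convenient, first with $\CP^2\#\overline{\CP}^2$ and then observing it does not matter) — one can realize the dotted-circle/$0$-framed-circle swap by a sequence of handle slides and isotopies of the diagram that fixes the diagram setwise, hence induces a diffeomorphism of the stabilized $4$-manifold restricting to $\tau$ on the boundary.

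\textbf{Key steps, in order.} First, fix an explicit handle diagram for $Y\#S^2\times S^2$: the Akbulut--Mazur diagram together with a $0$-framed unknot and a $0$-framed meridian of it, split from the rest. Second, describe $\tau$ precisely as the involution of $\partial Y$ swapping the two link components in the symmetric Akbulut diagram; this is the content of \cite{akbulut1991fake} and is recalled in the references on corks cited in the excerpt. Third, carry out the Akbulut-style ``dotted circle $\leftrightarrow$ $0$-framed circle'' move: with the extra $S^2\times S^2$ handles available as ``room'', perform the sequence of handle slides that converts the dotted $1$-handle into a $0$-framed $2$-handle and vice versa, so that the resulting diagram is \emph{identical} to the original one but with the two tubes/circles interchanged — this is exactly the diagrammatic manifestation of the fact that $Y\#S^2\times S^2$ admits a self-diffeomorphism inducing $\tau$. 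Fourth, package a handle-slide/isotopy sequence that returns the diagram to itself as a diffeomorphism $\tilde\tau$ of the stabilized manifold, and check that by construction its restriction to the boundary is the link-swap involution, i.e. $\tilde\tau|_{\partial Y}=\tau$. Finally, note that one can arrange $\tilde\tau$ to be the identity outside a neighborhood of $Y\#S^2\times S^2$ inside any ambient manifold, which is the form in which Proposition~\ref{pro: cork_stab} will be used later.

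\textbf{Main obstacle.} The essential difficulty is entirely in the third step: exhibiting the explicit handle calculus that trivializes the cork twist after a single $S^2\times S^2$ stabilization, i.e. producing a finite sequence of handle slides/isotopies of the stabilized diagram realizing the $1$-handle/$2$-handle swap. This is the classical ``corks die after one stabilization'' phenomenon; the computation is well known for the Akbulut--Mazur cork (it is implicit in Akbulut's original analysis and has been made explicit by several authors), so I would either reproduce the move sequence with figures analogous to Figure~\ref{fig:mazur_cork}, or cite the relevant Kirby-calculus reference and indicate how it yields a boundary-fixing diffeomorphism. The remaining steps — identifying $\tau$ with the link swap, and upgrading a diagram-preserving move sequence to an honest diffeomorphism restricting correctly on the boundary — are routine once the move sequence is in hand. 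One subtlety worth flagging: one must make sure the diffeomorphism obtained restricts to $\tau$ \emph{on the nose} on $\partial Y$, not merely to something isotopic to $\tau$; this is automatic because the handle slides can be taken to be supported away from a collar of the boundary, so the induced boundary map is literally the ambient isotopy of $S^3$ swapping the two link components, which is $\tau$ by definition.
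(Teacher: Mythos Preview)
Your approach is sound but genuinely different from the paper's. The paper does \emph{not} do the dot/$0$ swap by direct Kirby calculus on $Y\#S^2\times S^2$. Instead it first blows up: it identifies $Y\#\CP^2$ equivariantly with the $(+1)$-trace $X_{+1}(K)$ of a strongly invertible slice knot $K$ (so that $\tau$ on $\partial Y$ matches the knot involution $\bar\tau$, which visibly extends over the trace), and then observes that undoing the blowup amounts to blowing down one of two $(+1)$-spheres $S,\ S^{\tau}$ in $X_{+1}(K)$ built from the two slice disks $D,\ \bar\tau(D)$. Extending $\tau$ over $Y\#S^2\times S^2$ is thereby reduced to showing $S$ and $S^{\tau}$ become smoothly isotopic after one $S^2\times S^2$-stabilization, which is exactly the ``key stable isotopy'' of \cite[Theorem~4.2]{auckly2015stable}. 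Your route---realize the swap by an explicit handle-slide sequence after adding a $0$-framed Hopf pair---is more elementary in spirit, but the entire content sits in your third step, which you do not carry out; the ``well known'' computation you would cite is, in practice, \cite{auckly2015stable}, so the two arguments converge on the same reference. Two small corrections: your parenthetical about passing through $\CP^2\#\overline{\CP}^2$ is wrong here, since $Y$ is spin and $Y\#S^2\times S^2\not\cong Y\#\CP^2\#\overline{\CP}^2$; and in your last paragraph, handle slides supported away from the boundary induce the \emph{identity} on $\partial Y$, so $\tau$ must come from composing with the ambient link-symmetry $\sigma$, not from the slides themselves---your conclusion is right but the sentence as written conflates the two steps.
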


\begin{proof}
We use the techniques from \cite{auckly2015stable}. We first blow-up the Akbulut cork, $Y \# \mathbb{CP}^2$. It was shown in \cite[Theorem 4.2]{auckly2015stable}, there is a diffeomorphism $\rho$ from  $Y \# \mathbb{CP}^2$ to the trace of a $(+1)$-surgery along a slice knot $K$, $X_{+1}(K)$ (shown in Figure~\ref{fig:mazur_cork}). This knot is symmetric with a strong involution $\bar{\tau}$. It follows that $\bar{\tau}$ smoothly extends over the trace of the surgery \cite[Section 5]{dai2022corks}. Moreover, in the proof of \cite[Theorem 4.2]{auckly2015stable} the authors showed that $\rho$ is equivariant with respect to the symmetry on the boundaries of $Y \# \mathbb{C}P^2$ and $X_{+1}(K)$. Now it was observed in the same proof that under $\rho$ the $(+1)$-framed sphere $\mathbb{CP}^1 \subset \mathbb{CP}^2$ correspond to the $(+1)$-sphere $S \subset X_{+1}(K)$ obtained by capping of a ribbon disk $D$ of $K$ by the core of the 2-handle. One could also consider another $(+1)$-framed sphere in the trace, where we cap-off the disk $\bar{\tau}(D)$ by the core of the 2-handle, we denote this sphere by $S^{\tau}$. By construction, blowing-down $S$ gives back $C$, while blowing-down $S^{\tau}$ gives $Y$ with the twisted boundary (by $\tau$). Hence, in order to show the existence of the diffeomorphism $\tilde{\tau}: Y \# S^2 \times S^2 \rightarrow Y \# S^2 \times S^2$ which restricts to $\tau$ on $\partial Y$, it is enough to show that $S$ and $S^{\tau}$ are isotopic in the stabilized (by $S^2 \times S^2$) trace. This was shown in the proof of \cite[Theorem 4.2]{auckly2015stable}, by considering the \textit{key stable isotopy} defined by the authors.
\end{proof}

We will also need the notion of \textit{Andrew--Curtis corks} (or AC corks) defined in \cite{melvin2021higher} by Melvin--Schwartz. For the reader's convenience, we recall the definition below:
\begin{defi}
A cork is called an AC cork if it is built from the 4-ball by attaching an equal number of homotopically canceling 1 and 2-handles. In general, a 4-manifold is called an AC manifold if it is built only by attaching 1 and 2-handles to the 4-ball such that a subset of 2-handles homotopically cancels some subset of 1-handles and the remaining 2-handles are attached homotopically trivially. If we need to specify the handle structure of such AC cork $C$, we will denote it by 
\[
C= [L_1, L_2]
\] 
where $L_i$ is a link representing the set of $i$-handles. That is we think of 1-handles represented by dotted unlink and 2-handles represented by their framed attaching circles.
\end{defi}
For example, the Akbulut--Mazur cork is an AC cork. Such corks will be important for our construction. In \cite{melvin2021higher}, the authors further categorized the AC structures according to whether the 2-handles cancel all the 1-handles or not:

\begin{itemize}
\item An AC structure on a 4-manifold is said to be of type-I if X is homotopy equivalent to $\bigvee S^1$'s.

\item  An AC structure on a 4-manifold is said to be of type-II if X is homotopy equivalent to $\bigvee S^2$'s.

\end{itemize}
See Figure~\ref{fig:ac_manifold} for an example of an AC manifold.
\begin{figure}[h!]
\center
\includegraphics[scale=0.42]{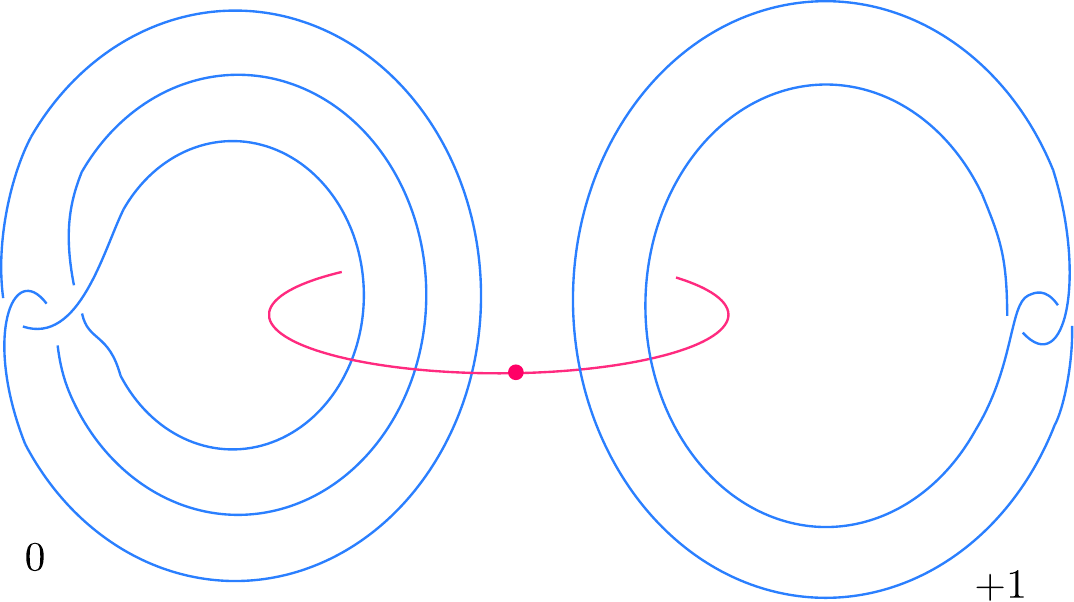}
\caption{An example of an AC manifold of Type-II}
\label{fig:ac_manifold}
\end{figure}
One key ingredient in the discussion that follows, is the work of Akbulut and Yasui \cite{akbulut2009knotting}, where the authors constructed infinitely many knotted embeddings of Akbulut cork $Y$. We will use these different embeddings in our construction. We now prove the localizability part of (i) in \cref{thm: localization intro}, which we recall for readers' convenience:

\begin{thm}
\label{thm: localization}
Let $X$ be $K3 \# \overline{\mathbb{CP}}^2$. Then there exists a $\Z^\infty$-summand $\mathcal{Z}$ of $\pi_0(\TDiff(X))_\ab$, and a contractible open submanifold $C \subset X$ such that $\mathcal{Z}$ can be localize to $C \# S^2 \times S^2 \subset X \#S^2 \times S^2$.
\end{thm}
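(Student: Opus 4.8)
The plan is to realize the infinite generation inside a single open contractible submanifold of $K3\#\overline{\CP}^2$ by carefully choosing the logarithmic transformations/knotted cork embeddings from Akbulut--Yasui so that they all lie in a bounded region. First I would recall the standard picture: $K3 = E(2)$ contains a nucleus $N(2)$, and the relevant exotic smooth structures $M_i$ on (a piece of) $K3\#\overline{\CP}^2$ are produced by replacing a fixed Akbulut cork $Y \subset N(2) \subset K3$ by its knotted re-embeddings $\iota_i : Y \hookrightarrow K3\#\overline{\CP}^2$ in the sense of \cite{akbulut2009knotting}. The key geometric observation I would establish is that, although the re-embeddings $\iota_i$ are knotted (hence not ambiently isotopic), one can arrange their images $\iota_i(Y)$ to all be contained in one fixed compact codimension-0 submanifold $K_0$ of $K3\#\overline{\CP}^2$; then I take $C$ to be the interior of a slightly enlarged $K_0$, which I must check is contractible (this is where the AC-cork/Mazur-type handle structure is used: $K_0$ is built from $B^4$ by 1- and 2-handles that homotopically cancel, so $C$ is contractible).

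Next I would produce the diffeomorphisms. For each $i$, Proposition~\ref{pro: cork_stab} says the cork-twist $\tau$ on the Akbulut--Mazur cork $Y$ extends to a diffeomorphism $\tilde\tau_i$ of $Y\#S^2\times S^2$ (using $\iota_i$ to transport the construction). Using these I build, exactly as in \eqref{eq: diffeo general}, a diffeomorphism $g_i$ of $X'=X\#S^2\times S^2$ of the form (roughly) $(\text{twist along }\iota_0) \circ (\text{twist along }\iota_i)$, which acts trivially on homology and is topologically isotopic to the identity by \cite{Q86,P86,gabai2023pseudoisotopies}. The crucial point is that all of this can be performed inside $C\#S^2\times S^2$: the cork-twists are supported in a neighborhood of the $\iota_j(Y)\subset K_0\subset C$, the stabilizing $S^2\times S^2$ can be attached inside $C$, and the connecting isotopies produced in \cite{auckly2015stable} are likewise compactly supported in that region. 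Hence each $g_i$ has a representative supported in $\mathrm{Int}(C\#S^2\times S^2)$, giving $\calZ := \langle [g_i]\rangle \subset \im(\pi_0(\TDiff_c(C\#S^2\times S^2))_\ab \to \pi_0(\TDiff(X'))_\ab)$, i.e. $\calZ$ localizes to $C\#S^2\times S^2$.

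Finally I would prove $\calZ$ is a $\Z^\infty$-summand of $\pi_0(\TDiff(X'))_\ab$ by the exact mechanism of Theorem~\ref{thm: infinite generation general}: set $\fraks_i' := \phi_i^1(\fraks_i)\#\frakt$ for the Seiberg--Witten basic classes $\fraks_i$ coming from the distinct exotic pieces $M_i$, verify Assumption~\ref{assumption for infinite generation} (infinitely many homeomorphic-but-not-diffeomorphic $M_i$ with distinct basic classes — this is the content of \cite{akbulut2009knotting} combined with Fintushel--Stern-type computations, or can be cited from \cite{Rub99}/\cite{konno2023homology}), and then run the computation \eqref{eq: FSW computation}: $\FSW(X',\fraks_i',g_i) = \SW(M_i,\fraks_i) + \SW(M,\phi_i^1(\fraks_i)) = 1 + 0$ after passing to a subsequence, while $\FSW(X',\fraks_i',g_j)=0$ for $i>j$ by the finiteness of families basic classes (\cref{prop: finiteley many families basic classes}). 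The collected homomorphism $\bigoplus_i \FSW(X',\fraks_i',-) : \pi_0(\TDiff(X'))_\ab \to \bigoplus_i \Z$ then exhibits $\calZ$ as a $\Z^\infty$-summand.

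The main obstacle I anticipate is the geometric step: showing that infinitely many of the knotted Akbulut--Yasui re-embeddings $\iota_i(Y)$ can simultaneously be squeezed into one fixed contractible region $C$ while (a) keeping the cork-twists and their stabilized isotopies compactly supported in $C\#S^2\times S^2$, and (b) still retaining the distinctness of the basic classes $\phi_i^1(\fraks_i)$ that drives the linear independence. Concretely this requires organizing the handle diagrams so that the "outside" of the cork is absorbed into $\overline{\CP}^2\#S^2\times S^2$ (which dissolves) rather than being needed as ambient room, and then identifying $C$ explicitly — this is precisely the place where the explicit construction of $C$ "out of Akbulut corks" mentioned in the introduction must be carried out. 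The rest (the families Seiberg--Witten bookkeeping) is a direct application of the already-established Theorem~\ref{thm: infinite generation general} and Theorem~\ref{thm: BK gluing}.
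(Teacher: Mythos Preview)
Your overall strategy---use Akbulut--Yasui knotted embeddings of the Akbulut cork together with Proposition~\ref{pro: cork_stab} and the families Seiberg--Witten computation of Theorem~\ref{thm: infinite generation general}---is exactly what the paper does. But the geometric step you single out as ``the main obstacle'' is not merely difficult: as you have set it up, it is \emph{impossible}, and the paper handles it differently.

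You propose to place \emph{all} of the knotted images $\iota_i(Y)$ inside one fixed compact contractible $K_0$ and take $C=\mathrm{Int}(K_0)$. If that worked, every $g_i$ would be supported in the fixed compact set $K_0\#S^2\times S^2$, so the entire $\Z^\infty$-summand $\calZ$ would localize to a compact subset of $C\#S^2\times S^2$. This is precisely what part (ii) of Theorem~\ref{thm: localization intro} (proved independently via the families adjunction inequality) forbids. So no single compact contractible $K_0$ can do the job. The paper's construction avoids this by building an \emph{increasing nested sequence} of compact contractible AC manifolds $\overline{C}_1\subset\overline{C}_2\subset\cdots$ with $\bigcup_{i\leq n}C_{K_i}\subset\overline{C}_n$, using the Finger Lemma and Encasement Lemma of Melvin--Schwartz \cite{melvin2021higher}, and then sets $C=\bigcup_n\mathrm{Int}(\overline{C}_n)$. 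This $C$ is open, contractible, and genuinely non-compact; for each $n$ only finitely many of the $g_i$ are supported in $\overline{C}_n\#S^2\times S^2$, which is consistent with (ii).

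A smaller point: your description of $g_i$ as a composition of two cork-twists is not quite the paper's construction. The cork-twist extension $\tilde\tau_i$ enters only as part of the mediating diffeomorphism $\phi_i=\tilde\tau_i\circ\overline{\Phi}_{K_i}$; the diffeomorphism $g_i$ itself is of the form $\phi_i^{-1}\circ(\id\#f_0)\circ\phi_i\circ(\id\#f_0)$ with $f_0\in\Diff_\del(S^2\times S^2\setminus\mathrm{Int}(D^4))$ as in \eqref{eq: ingredient fnode}. This is what makes the gluing formula (Theorem~\ref{thm: BK gluing}) directly applicable and guarantees that $g_i$ is supported in $\overline{C}_n\#S^2\times S^2$ once the $S^2\times S^2$ summand has been moved (by a finger isotopy) to be attached inside $\overline{C}_n$.
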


\begin{proof}
We begin with a topological construction. Firstly, we recall the construction of Akbulut and Yasui in \cite{akbulut2009knotting}. The authors begin by considering the specific embedding of the Akbulut cork $Y$ in $X: = E(2) \# \overline{\mathbb{CP}}^2$. Then the cork $Y$ is reglued by the boundary twist $\tau$. Let us represent the twisted manifold by $X^{\tau}$ which is homeomorphic to  $E(2) \# \overline{\mathbb{CP}}^2$ and has an embedding of twisted $Y$. Note that $X$ originally had an embedding of the Gompf nucleus $N(2)$ away from the embedding of $Y$ \cite{gompf1991nuclei}. In particular, there is still an embedding of $N(2)$ in $X^{\tau}$. Consider performing the Fintushel-Stern knot surgery along a knot $K$ on the Gompf nucleus. We denote the resulting manifold by $X^{\tau}_K$. Note that the twisted $Y$ is still embedded in  $X^{\tau}_K$. As observed in \cite{akbulut2009knotting}, in $X^{\tau} \setminus N(2)$, one can split-off a $S^2 \times S^2$-summand. Hence it follows from 
\cite{akbulut2002variations, auckly2003families,BS13}
that there is a diffeomorphism
\[
\Phi_K: X^{\tau} \rightarrow X^{\tau}_K
\]
Let us define 
\[
C_K:= \Phi_K^{-1}(Y) \subset X^{\tau}
\]
Now observe that regluing $C_K$ by the boundary twist corresponds to regluing the twisted $Y \subset X^{\tau}_K$ by the boundary twist. Hence the resulting 4-manifold (after regluing $C_K$) is diffeomorphic to $X_K$. In particular, if we vary the knots $K$ among a family $\{K_p\}_{p \in \mathbb{N}}$ such that the Alexander polynomials of $K_p$ are pair-wise different, then it follows from the Fintushel--Stern knot surgery formula for Seiberg--Witten invariant \cite{fintushel_knot_surgery} that $X_{K_{p}}$'s are pair-wise not diffeomorphic (while being homeomorphic). This shows that the corresponding family of embeddings of $Y$ in $X^{\tau}$, given by $\{C_{K_{p}}\}$ are not smoothly isotopic.

Let us now consider $K_i:=T_{2,2i+1}$, for any $i \in \{1,2, \cdots, n \}$, for any $n$. We now claim that the exists family of contractible 4-manifolds $\{\overline{C}_n\}$ satisfying the following property
\[
\bigcup\limits_{i=1}^{n}C_{K_i} \subset \overline{C}_n, \; \; \mathrm{and} \; \; \overline{C}_{n} \subset \overline{C}_{n+1}.
\]
To build this $\overline{C}_n$, we argue similarly as \cite{melvin2021higher}. Firstly, observe that each of $C_{K_i}$ is an AC cork, since they are different embeddings of the Akbulut--Mazur cork. Define $\overline{C}_1:=C_{K_1}$. By an ambient isotopy, we pull apart the 1-handles of $C_{K_1}$ and $C_{K_2}$ so that they lie in separate disjoint 3-balls and the core of the 2-handles intersect transversely. Following the proof of Finger Lemma from \cite[Lemma 2.4]{melvin2021higher}, we now form the `union' of $C_{K_1}$ and $C_{K_2}$ by plumbing the 2-handles together according to the intersections of their cores, see Figure~\ref{fig:union_cork}.
\begin{figure}[h!]
\center
\includegraphics[scale=0.40]{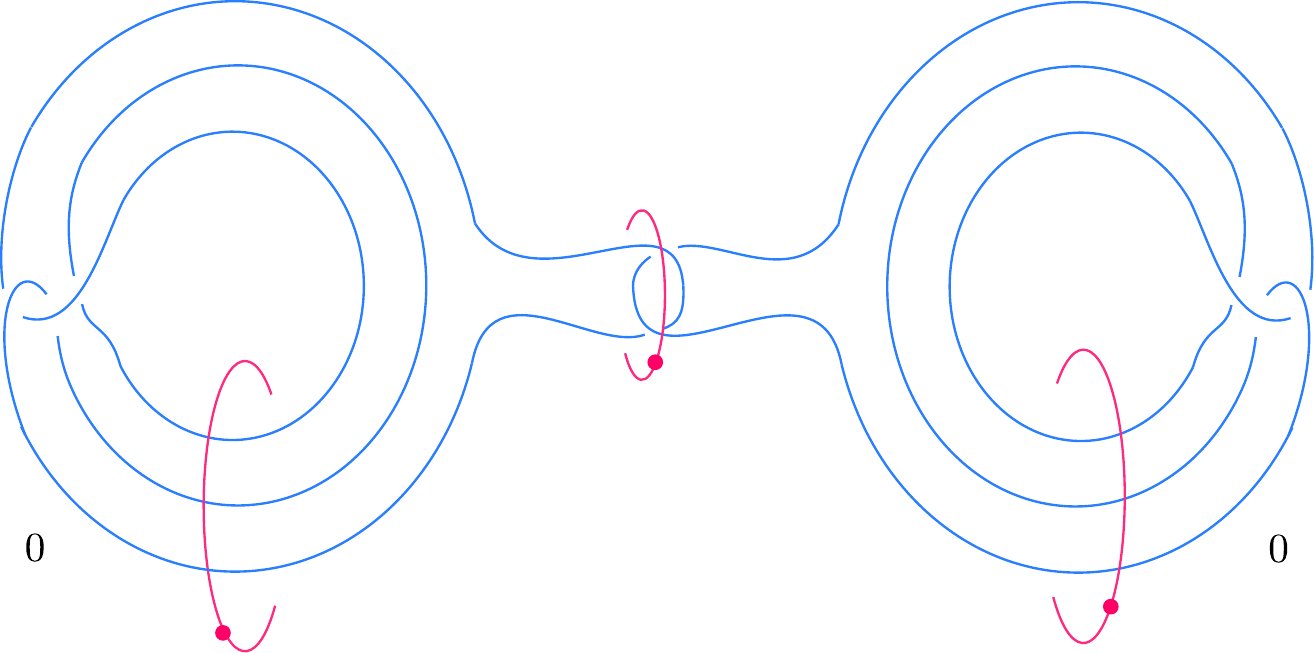}
\caption{An AC manifold of type-I formed by the \textit{union} of the two corks, note that the number of clasps in the middle depends on the intersections of the core of the two handles in the embedding.}
\label{fig:union_cork}
\end{figure}
Note that in this union of $C_{K_1}$ and $C_{K_2}$ is an AC manifold of type-I. Then we apply Encasement Lemma from \cite[Lemma 2.2]{melvin2021higher} to get an embedded AC 4-manifold $\overline{C}_{2}$ of type-II, such that $H_2(\overline{C}_2)=H_2( C_{K_1} \cup C_{K_2})=0$. Hence $\overline{C}_{2}$ is contractible. By construction:
\[
C_{K_1} \cup C_{K_2} \subset \overline{C}_{2} \; \; \mathrm{and} \; \; \overline{C}_{1} \subset \overline{C}_{2}.
\]
Now since $\overline{C}_2$ is an AC cork, we repeat the same argument with $\overline{C}_{2}$ and $C_{K_3}$ (in place of $C_{K_1}$ and $C_{K_2}$) from above to get another AC cork $\overline{C}_{K_3}$. Again, by construction:
\[
\bigcup\limits_{i=1}^{3}C_{K_i} \subset \overline{C}_{K_3} \; \; \mathrm{and \; \;} \overline{C}_2 \subset \overline{C}_3.
\]
We proceed inductively to get $\{\overline{C}_{n}\}$. Let us connected sum $S^2 \times S^2$ to $X^{\tau}$ by deleting a $3$-ball from $\overline{C}_{n}$. We now define diffeomorphisms
\[
\overline{\Phi}_{K_i}: X^{\tau} \# S^2 \times S^2 \rightarrow X^{\tau}_{K_i} \# S^2 \times S^2.
\]
by first isotoping the $S^2 \times S^2$ factor\footnote{Note that this factor is unrelated to the internal $S^2 \times S^2$-factor used to define $\Phi_K$ above.} (by finger moves) in $\overline{C}_n$ so that it is connected to $C_{K_i}$, then post-composing with
\[
\Phi_{K_i} \# \mathrm{id} : X^{\tau} \# S^2 \times S^2 \rightarrow X^{\tau}_{K_i} \# S^2 \times S^2.
\]
This diffeomorphism $\overline{\Phi}_{K_i}$ maps the stabilized contractible manifold $\overline{C}_n \# S^2 \times S^2$ to a stabilized  contractible manifold $\overline{\Phi}_{K_i}(\overline{C}_{n}) \# S^2 \times S^2 \subset X^{\tau}_{K_i} \# S^2 \times S^2$. Moreover, note that since the finger-move isotopy does not move $C_{K_i}$, by definition of $C_{K_i}$, $Y \# S^2 \times S^2 \subset \overline{\Phi}_{K_i}(\overline{C}_{n}) \# S^2 \times S^2$. In particular, it follows from Proposition~\ref{pro: cork_stab} that there exist a diffeomorphism 
\[
\tilde{\tau_{i}}: X^{\tau}_{K_i} \# S^2 \times S^2 \rightarrow X_{K_i} \# S^2 \times S^2
\]
Note since this diffeomorphism is supported in $Y \# S^2 \times S^2$, it follows that $\tilde{\tau_{i}}(\overline{\Phi}_{K_i}(\overline{C}_{n}) \# S^2 \times S^2)$ is again a once stabilized contractible manifold $\tilde{\tau_{i}}(\overline{\Phi}_{K_i}(\overline{C}_{n})) \# S^2 \times S^2$. In other words, $\tilde{\tau_{i}}|_{S^2 \times S^2} \subset \tilde{\tau_{i}}(\overline{\Phi}_{K_i}(\overline{C}_{n})) \# S^2 \times S^2$. Let us now consider the composition
\[
\phi_i:= \tilde{\tau_{i}} \circ \overline{\Phi}_{K_{i}} : X^\tau \# S^2 \times S^2 \rightarrow X_{K_i} \# S^2 \times S^2.
\]
which sends a once stabilized contractible manifold $\overline{C}_n \# S^2 \times S^2$ to a once stabilized contractible manifold $\tilde{\tau_{i}}(\overline{\Phi}_{K_i}(\overline{C}_{n})) \# S^2 \times S^2$.

Following the preceding section, we will use $\phi_i$ as the mediating diffeomorphism to define a sequence of diffeomorphism $g_i$ of $X^{\tau} \# S^2 \times S^2$. 

Firstly, let us consider a diffeomorphism of $S^2 \times S^2$, given by 
\begin{align*}
f_0: S^2 \times S^2 \rightarrow S^2 \times S^2 \\
(x,y) \mapsto (r(x),r(y))
\end{align*}
where $r:S^2 \rightarrow S^2$ is the reflection along the equator $S^1 \subset S^2$. Let us assume that we isotope $f_0$ so that it fixes a small 4-disk in $S^2\times S^2$. Hence, we can regard $f_0 \in \Diff_{\del}(S^2\times S^2 \setminus \mathrm{Int}(D^4))$, hence as a diffeomorphism of $W \# S^2 \times S^2$ by extending it by the identity, for any closed 4-manifold $W$\footnote{We will denote this diffeomorphism as $f_0$ for every $W$ if there is no risk of confusion.}. Let us now consider the composition $g_i$:
\begin{align}
\label{eq: gi}
\begin{split}
g_i :& X^{\tau} \# S^2 \times S^2 \xrightarrow{\mathrm{id} \# f_0} X^{\tau} \# S^2 \times S^2 \\
& \xrightarrow{\phi_i} X_{K_i} \# S^2 \times S^2 \xrightarrow{\mathrm{id} \# f_0}  X_{K_i} \# S^2 \times S^2 \xrightarrow{\phi^{-1}_i} X^{\tau} \# S^2 \times S^2.
\end{split}
\end{align}
We now argue that there is a $\mathbb{Z}^{n}$-summand in $\pi_0(\TDiff(X^{\tau} \# S^2 \times S^2))_\ab$ that can localize to $\pi_0(\TDiff_{\partial}(C_n \# S^2 \times S^2))$ for any $n$. This follows from the proof of Theorem~\ref{thm: infinite generation general}. In fact, since we will use the diffeomorphisms $g_i$ to establish this, we only need to check the existence of the  $\mathbb{Z}^{n}$-summand in $\pi_0(\TDiff(X^{\tau} \# S^2 \times S^2))_\ab$ since by construction these $g_i$ localize to $C_n \# S^2 \times S^2$ (rel $\partial$). Firstly, note that by construction there is a mod 2 basic class $\fraks_i'$ in $X_{K_i}$, such that $\fraks_i'$, when thought of as an element in $H_2(X^{\tau})$ are all distinct. Moreover, since $X^{\tau}$ splits-off an internal $S^2 \times S^2$ factor, it follows that $\SW(X^{\tau}, \fraks)=0$ for any $\spinc$-structure $\fraks$ on $X^{\tau}$. Again a similar analysis as in the preceding section yields
\begin{align*}
\FSW(X^{\tau} \# S^2 \times S^2,\fraks_i',g_i)=1
\end{align*}
over $\Z/2$, just as in \eqref{eq: FSW computation}.
Hence we get homomorphisms
\begin{align}
\label{eq: collected homomorphism corks}
\bigoplus_{i=1}^n \FSW(X,\fraks_i',-) : \pi_0(\TDiff(X))_\ab \to \bigoplus_{i=1}^n \Z
\end{align} 
which implies the desired claim. Let us now consider:
\[
C:=\bigcup^{\infty}_{n=1} \mathrm{Int}(C_n).
\]
Since by construction $\{C_n\}$ are nested and each $C_n$ is contractible, it follows that $C$ is a contractible, non-compact manifold. 
By the argument above there is a $\mathbb{Z}^{\infty}$-summand in $\pi_0(\TDiff_{\partial}(C \# S^2 \times S^2))_\ab$ localizing the $\mathbb{Z}^{\infty}$-summand in $\pi_0(\TDiff(X^{\tau} \# S^2 \times S^2))_\ab$. Finally note that although we prove our result for $X^{\tau} \# S^2 \times S^2$, which is diffeomorphic to $X \# S^2 \times S^2$, hence the claimed statement follows.
\end{proof}

\section{Proof of non-localization}
\label{section Proof of non-localizability}

Now we prove our non-localization results.
Our argument is inspired by Auckly's work \cite{auckly2023smoothly}.
Before starting the proof, let us recall the following \lcnamecref{lem: geography}, which is based on a geography result by Park \cite{Park02}.

\begin{lem}[{\cite[Lemma  4.4]{auckly2023smoothly}}]
\label{lem: geography}
Let $X$ be a simply-connected closed smooth 4-manifold. 
Then there are $N \geq 0$ and symplectic 4-manifolds $M_X$, $M_{-X}$ with the following properties: 
\begin{itemize}
    \item[(i)] both $M_X$ and $M_{-X}$ contain $N(2)$,
    \item[(ii)] $X\#N(S^2\times S^2)$ is orientation-preservingly diffeomorphic to $M_X\#S^2\times S^2$, and orientation-reversingly diffeomorphic to $M_{-X}\#S^2\times S^2$.
\end{itemize}
\end{lem}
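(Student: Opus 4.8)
The plan is to deduce this from the geography of simply-connected symplectic $4$-manifolds together with two classical facts: (a) by Freedman's classification \cite{freedman1982topology}, a simply-connected closed topological $4$-manifold is determined by its intersection form; and (b) by Wall's stabilization theorem, two simply-connected closed smooth $4$-manifolds with isomorphic intersection forms become orientation-preservingly diffeomorphic after connected sum with finitely many copies of $S^2\times S^2$. The desired identity $M_X\#S^2\times S^2\cong X\#N(S^2\times S^2)$ translates, at the level of forms, into $Q_{M_X}\cong Q_X\oplus(N-1)H$ (with $H$ the hyperbolic form, i.e. the form of $S^2\times S^2$), so it suffices to produce, for some $N$, a simply-connected symplectic $4$-manifold $M_X$ which contains $N(2)$, has intersection form $Q_X\oplus(N-1)H$, and sits already ``deep enough'' in Wall's stable range that one further stabilization by $S^2\times S^2$ realizes the Wall equivalence with $X\#N(S^2\times S^2)$; the manifold $M_{-X}$ is obtained by running the same argument on $-X$ and using $-(S^2\times S^2)\cong S^2\times S^2$, taking $N$ to be the larger of the two resulting integers.

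First I would record the bookkeeping: under $\#(S^2\times S^2)$ the pair $(\chi,\sigma)$ changes by $(+2,0)$, so $c_1^2=2\chi+3\sigma$ increases by $4$ while $\chi_h=(\chi+\sigma)/4$ is unchanged, and under $\#\overline{\CP}^2$ one has $c_1^2\mapsto c_1^2-1$ with $\chi_h$ unchanged; also, for a simply-connected symplectic $4$-manifold $b^+=2\chi_h-1$ is odd, so $b^+(M_X)=b^+(X)+N-1$ only constrains the parity of $N$. With this in hand I would split into cases by the type of $Q_X$. If $Q_X$ is odd, write $Q_X\cong a\langle1\rangle\oplus b\langle-1\rangle$; by Park's geography theorem \cite{Park02} there is, for $(c_1^2,\chi_h)$ ranging over a wide cone of values, a simply-connected minimal symplectic $4$-manifold with those invariants, and blowing it up by copies of $\overline{\CP}^2$ produces one with any prescribed odd $b^+$, sufficiently negative signature, and odd form. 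If $Q_X$ is even, write $Q_X\cong pE_8\oplus qH$ with $p\le 0$ (Donaldson excludes the definite case), and use the spin part of Park's geography to realize a simply-connected spin symplectic $4$-manifold with the required $(c_1^2,\chi_h)$. In either case I would install a copy of $N(2)$ in $M_X$ by building $M_X$ as a symplectic fiber sum of the geography model with a copy of $E(2)=K3$ along a regular fiber (or by performing knot surgery inside an elliptic summand), which drops a copy of $N(2)$ into the complement of the sum region while altering $(c_1^2,\chi_h)$ in a controlled way that the free choice of $N$ and of the number of blow-ups absorbs. Once $\chi$, $\sigma$ and the type are matched, Freedman gives that $M_X\#S^2\times S^2$ is homeomorphic to $X\#N(S^2\times S^2)$; Wall's theorem then upgrades this to an orientation-preserving diffeomorphism, the extra $S^2\times S^2$-stabilizations it requires having been absorbed into the choice of $N$ using the (almost complete) decomposability of fiber sums of elliptic surfaces in the spirit of \cite{auckly2015stable}, so that a single stabilization on the $M_X$ side already suffices.

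The main obstacle, to my mind, is entirely the geography input: one needs a reservoir of simply-connected symplectic $4$-manifolds containing $N(2)$ that realizes a cofinal family of stable intersection forms, and the genuinely delicate instances are those with $\sigma(X)\ge 0$, especially $\sigma(X)=0$ with $Q_X$ even, since then $M_X$ must be a simply-connected spin symplectic $4$-manifold of nonnegative (respectively zero) signature. I would handle those by symplectically fiber-summing a spin symplectic $4$-manifold of negative signature that already contains $N(2)$ (such as $E(2n)$) with a simply-connected symplectic $4$-manifold of positive signature, the summing torus chosen disjoint from the copy of $N(2)$, and choosing the pieces and blow-ups so that the signatures combine to the target value and the result is spin and simply-connected; the existence of simply-connected symplectic $4$-manifolds of positive signature, supplied by the symplectic-geography literature, is exactly what makes this step go through. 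A secondary point requiring care is that only one $S^2\times S^2$ appears on the $M_X$ side, so one cannot merely invoke ``enough'' stabilizations — this is why $M_X$ is chosen as a large fiber sum lying in Wall's stable range rather than a minimal model, and why $N$ is allowed to be large.
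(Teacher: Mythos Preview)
The paper does not supply its own proof of this lemma; it is quoted from \cite[Lemma~4.4]{auckly2023smoothly} with only the remark that it rests on Park's geography result \cite{Park02}. So there is no in-paper argument to compare against directly.

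Your sketch is essentially the right strategy and matches the approach in the cited source: realize the target topological type via Park's symplectic geography (adjusting $c_1^2$ and $\chi_h$ by blow-ups and fiber sums, and inserting a copy of $N(2)$ by fiber-summing with an elliptic surface), then upgrade homeomorphism to diffeomorphism after a single $S^2\times S^2$. You have also correctly isolated the two genuinely delicate points: the nonnegative-signature spin case of geography, and the constraint that only one $S^2\times S^2$ appears on the $M_X$ side.

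On that last point your logical flow is slightly off. You write that the extra Wall stabilizations are ``absorbed into the choice of $N$''; but one cannot cancel $S^2\times S^2$ summands in dimension $4$, so knowing $M_X\#k(S^2\times S^2)\cong X\#m(S^2\times S^2)$ for some $k>1$ does not by itself give the statement with $k=1$. The mechanism is rather the one you cite in the same sentence but do not foreground: the symplectic model $M_X$ is built from pieces (elliptic surfaces, their fiber sums, blow-ups) for which it is known \emph{independently} that $M_X\#S^2\times S^2$ completely decomposes into a standard connected sum of $\CP^2$'s, $\overline{\CP}^2$'s (or $K3$'s and $S^2\times S^2$'s in the spin case). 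Separately, $X\#N(S^2\times S^2)$ is standard for $N$ large by Wall. With both sides standard and the invariants matched, the diffeomorphism follows. So the argument is ``both sides dissolve after one stabilization'' rather than ``Wall with a controlled number of stabilizations'', and the reference you want alongside \cite{Park02} is the Mandelbaum--Moishezon/Gompf-type decomposition results underlying \cite{auckly2015stable}. With that clarification, your outline is correct.
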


Now we prove our main non-localization result:

\begin{proof}[Proof of Theorem~\ref{thm: intro stabilized 4-manifolds}]
We will first consider the case when $\alpha^2 \geq 0$ with $[\Sigma]= \alpha$. Our argument is quite inspired by that in \cite[Section 4]{auckly2023smoothly}. Suppose that $X$ is spin and $\alpha$ is characteristic. Consider $M_X$ from Lemma~\ref{lem: geography}. Let $\sigma$ and $\delta$ respectively denote the section and fiber class of the nucleus $N(2) \subset M_X$. Then it follows from \cite[Lemma 4.1]{auckly2023smoothly} and the work of Wall \cite{Wa64} that there is a diffeomorphism 
\[
\Phi : M_X \# S^2 \times S^2 \to M_X \# S^2 \times S^2
\]
such that the action of $\Phi_*$ on $H_2(M_X \# S^2 \times S^2)$ sends $\alpha$ to $[2A_1 (\sigma+ \delta) + 2A_1 A_2 \delta]$, where $A_i$ are integers with $A_1 > 0$ and $A_2 \geq 0$.

As usual, we identify a spin$^c$ structure on a simply-connected 4-manifold with its first Chern class.
Let $K$ be the canonical basic class of $M_X$.
Since both $\sigma + \delta$ and $\delta$ are represented by torus and $K$ is a basic class of $M_X$, the (ordinary) adjunction inequality applied to $(M_X, K, T^2)$ implies
\begin{align}
\label{eq: ordinary adjunction}
K\cdot(\sigma + \delta)=0, \; \; \mathrm{and} \; \; K\cdot\delta=0.
\end{align}
The adjunction inequality implies also that 
\begin{align}
\label{eq: vanishig thanks to adj}
\SW(M_X, K + 2i \delta) = 0 \quad(i>0).
\end{align}

Now as in \eqref{eq: diffeo general}, we define a sequence of diffeomorphisms $\{ g_i \}$ on $M_X \# S^2 \times S^2$. To specify these diffeomorphisms, it is enough to define the mediating diffeomorphisms. In this case, the mediating diffeomorphism $\phi_i$ will be given by log-transformation on the nucleus:
\[
\phi_i:M_X \# S^2 \times S^2 \rightarrow M_i \# S^2 \times S^2.
\]
Here $M_i = (M_X)_{2i+1}$ represents the closed 4-manifold obtained from $M_X$ by performing the $(2i+1)$-log transformation on the embedded distinguished nucleus. Again it follows from \cite{Gom91}
that such diffeomorphisms $\phi_i$ exists and satisfies the conditions from Assumption~\ref{assumption for infinite generation}.
By Wall's theorem \cite{Wa64}, we can
arrange $\phi_i$ so that $\phi_i^\ast$ decomposes into $\phi_i^\ast = \phi_i^1 \oplus \phi_i^2$ as in \cref{assumption for infinite generation}, where $\phi_i^1 : H^2(M_i) \to H^2(M_X)$ and $\phi_i^2 : H^2(S^2\times S^2) \to H^2(S^2\times S^2)$ are isomorphisms, and $\phi_i^1$ can be assumed to be obtained from an isomorphism (denoted by the same notation) $\phi_i^1 : H^2(N(2)_{2i+1}) \to H^2(N(2))$ and by extending it by the identity.

It remains to define the $\spinc$-structures $\mathfrak{s}_i$ to ensure all conditions of Assumption~\ref{assumption for infinite generation} are met.  
Define:
$\mathfrak{s}_i:= (\phi_i^1)^{-1}(K + 2i \delta).$
By a surgery formula on Seiberg--Witten invariants \cite{MMS97}, for $j \in \Z$, $(\phi_i^1)^{-1}(K+2j\delta)$ are mod 2 basic class of $M_i$ exactly when $|j| \leq i$.
Thus $\fraks_i$ is a mod 2 basic class of $M_i$.
Now it is clear that all conditions of Assumption~\ref{assumption for infinite generation} are met.
We define a spin$^c$ structure  $\mathfrak{s}_i'$ on $X'$ by
\[
\mathfrak{s}_i' = \phi_i^1(\mathfrak{s}_i)\#\frakt = K + 2i \delta,
\]
where $\mathfrak{t}$ be the spin structure of $S^2 \times S^2$.

Thus it follows from Theorem~\ref{thm: infinite generation general} that there is a $\mathbb{Z}^{\infty}$-summand of $\pi_0(\TDiff(X'))_{\ab}$ generated by $\{ g_i \}$.
Set
\[
f_i= \Phi^{-1} \circ g_i \circ \Phi : X' \to X'
\]
Then $\{f_i\}$ also generate a $\Z^\infty$-summand,
which we denote by $\mathcal{Z}(\alpha)$.
For a multi-index 
\[
\nu = (n_1,n_2,\dots) \in (\Z_{\geq0})^{\oplus\N},
\]
we get a well-defined diffeomorphism
\[
f^\nu := \prod_{i=1}^\infty f_{i}^{n_i} \in \TDiff(X'),
\]
since $f_i^{n_i}=\id_X$ except for finitely many $i$.
We look at the image of the surface $\Sigma$ under this diffeomorphism:
\[
\Sigma^{\nu} := f^\nu(\Sigma).
\]

Now assume that
\begin{align}
\label{eq: assumption for argument by contradiction}
 [f^\nu] \in \im(\pi_0(\TDiff_c(X'\setminus \Sigma))_\ab \to \pi_0(\TDiff(X'))_\ab).    
\end{align}
It follows that $\Sigma^\nu$ is smoothly isotopic to $\Sigma$.
Let $i(\nu) \geq 0$ denote the largest $i$ with $n_i\neq0$.
We shall see that we have a contradiction for $\nu$ with sufficiently large $i(\nu)$.

To see this, define
\[
g^\nu := \prod_{i=1}^\infty g_{i}^{n_i} \in \TDiff(X'),
\]
and set
\[
\Sigma' = \Phi(\Sigma),\quad (\Sigma')^\nu = g^\nu(\Sigma').
\]
Note that $\Sigma^\nu$ is isotooic to $\Sigma$ if and only if $(\Sigma')^\nu$ is isotopic to $\Sigma'$. Thus, by the current assumption, we have $(\Sigma')^\nu$ is isotopic to $\Sigma'$.

Now we claim that
\begin{align}
\label{eq: vanishing SW ij}
\SW(M_i,\phi_j^\ast\fraks_i')=0\quad (i>j).
\end{align}
To see this, recall that $\phi_i^1$ is obtained from an isomorphism (denoted by the same notation) $\phi_i^1 : H^2(N(2)_{2i+1}) \to H^2(N(2))$.
Note that, for $x\in H^2(N(2))$, we have $x \cdot \delta=0$ if and only if $x \in \Z\cdot\delta$.
Thus, it follows from \eqref{eq: ordinary adjunction} that $K|_{N(2)} \in \Z \cdot \delta$.
Hence $(K+2i\delta)|_{N(2)}$ is a multiple of $\delta$.
Set $\delta'_i := (K+2i\delta)|_{N(2)} \in H^2(N(2))$.
Since the map assigning $i$ the divisibility of $\delta'_i$ is injective, we have \eqref{eq: vanishing SW ij} by the description of the set of mod 2 basic classes of $M_i$ mentioned above.

Now a computation analogous to \eqref{eq: FSW computation} implies that 
\begin{align}
&\FSW(X',\fraks_i', g_i)\neq0 \quad (i\geq1), \label{eq: ii}\\
&\FSW(X',\fraks_i', g_j) = 0 \quad (i>j\geq0).
\label{eq: ij}
\end{align}
Here, to get \eqref{eq: ii} without passing to a subsequence, we have used \eqref{eq: vanishig thanks to adj}.
To get \eqref{eq: ij}, we have used \eqref{eq: vanishing SW ij}.

It follows from \eqref{eq: ii}, \eqref{eq: ij} that 
\begin{align*}
\FSW(X',\fraks_{i(\nu)}', g^\nu)
=& \sum_{i=1}^{i(\nu)} n_i\FSW(X',\fraks_{i(\nu)}', g_i)\\
=& n_{i(\nu)}\FSW(X',\fraks_{i(\nu)}', g_{i(\nu)})
\neq0.
\end{align*}

Now we apply the family adjunction inequality from Theorem~\ref{thm: family adjunction} to $(X',\fraks_{i(\nu)}', g^\nu)$ with the surface $\Sigma'$ to obtain the following:
\begin{align*}
\begin{split}
2g(\Sigma) -2 
&\geq 
|(K+2i(\nu) \delta)\cdot\Phi_\ast(\alpha)| + \Phi_\ast(\alpha)\cdot\Phi_\ast(\alpha) \\
&= |(K + 2i(\nu) \delta)\cdot(2A_1 (\sigma+\delta) + 2A_1A_2 \delta)| + (2A_1 (\sigma+\delta) + 2A_1A_2 \delta)^2 \\
&= 8c^{2}_{1}A_2 + 4A_1 i(\nu).
\end{split}
\end{align*}
Here in this computation, in the third line, we have used the following:
Firstly, we have used \eqref{eq: ordinary adjunction}.
Secondary,
since $\sigma$ and $\delta$ are a basis of an hyperbolic summand of the intersection form, we get
\[
(\sigma+\delta)^2=\delta^2=0, \; \; \mathrm{and} \; \; (\sigma+\delta)\cdot\delta=1.
\]

Thus
\[
\calZ(\alpha) \cap \im(\pi_0(\TDiff_c(X'\setminus \Sigma))_\ab \to \pi_0(\TDiff(X'))_\ab)
\]
is generated by 
\[
\Set{[f_i] | 2g(\Sigma)-2 \geq 8A_1^2A_2+4A_1i},
\]
which is a finite set, as $A_1 > 0$.

How to modify the above proof for other variations of $X$ (i.e. non-spin $X$) and $\alpha$ (i.e. non-characteristic $\alpha$ or $\alpha^2 < 0$) is totally analogous to arguments in the proof of \cite[Theorem 1.5]{auckly2023smoothly}, so leave it to the reader.
We just remark that the 4-manifold $M_{-X}$ in \cref{lem: geography} is used to treat the case that $\alpha^2<0$.

Lastly, we check the ``furthermore part" of the statement.
Given $n\neq0$, let $\Sigma$ be a surface that represents $n\alpha$, and let $\nu \in (\Z_{\geq0})^{\oplus\N}$, and assume that
\begin{align*}
 [f^\nu] \in \im(\pi_0(\TDiff_c(X'\setminus \Sigma))_\ab \to \pi_0(\TDiff(X'))_\ab).    
\end{align*}
Note that $\Phi_\ast$ takes $n\alpha$ to $n[2A_1 (\sigma+ \delta) + 2A_1 A_2 \delta]$.
By repeating the above argument, we have
 \begin{align*}
\begin{split}
2g(\Sigma) -2 
&\geq |n||(K + 2i(\nu) \delta)\cdot(2A_1 (\sigma+\delta) + 2A_1A_2 \delta)| + n^2(2A_1 (\sigma+\delta) + 2A_1A_2 \delta)^2 \\
&= 8n^2A_1^{2}A_2 + 4|n|A_1 i(\nu).
\end{split}
\end{align*}
Thus, if we put $\calZ(n\alpha)=\calZ(\alpha)$,
\[
\calZ(n\alpha) \cap \im(\pi_0(\TDiff_c(X'\setminus \Sigma))_\ab \to \pi_0(\TDiff(X'))_\ab)
\]
is generated by 
\[
\Set{[f_i] | 2g(\Sigma)-2 \geq 8n^2A_1^2A_2+4|n|A_1i},
\]
which is again a finite set.
This completes the proof of the ``furthermore part".
\end{proof}

\begin{proof}[Proof of \cref{thm: intro stabilized 4-manifolds non-negative}]
The proof is totally analogous to that of Theorem~\ref{thm: intro stabilized 4-manifolds}.
The differences are only as follows.

First, we assumed that $X$ contains $N(2)$ and admits a mod 2 basic class.
Thus we do not have to use the geography result (\cref{lem: geography}): we can use $X$ in place of $M_X$ in the proof of Theorem~\ref{thm: intro stabilized 4-manifolds}.
Second, since we do not have the counterpart of $M_{-X}$ now, we cannot handle homology classes $\alpha$ with $\alpha^2<0$.
\end{proof}

Now we give proof of the results on the non-localization to small regions of 4-manifolds:

\begin{proof}[Proof of \cref{cor: intro stabilized 4-manifolds homology disk}]
By replacing $X$ with $X\#S^2\times S^2$ if necessary, we can assume that $H_2(X')\neq0$.
Pick a non-zero homology class $\alpha \in H_2(X';\Z)$ and set $\calZ = \calZ(\alpha)$.
Let $C$ be a topological rational homology 4-disk $C$ local-flatly embedded in $X'$.
Note that $\del C$ has a collar neighborhood because of local flatness \cite{Brownlocally62}.
Using this, we can get the Mayer--Vietoris sequence corresponding to the decomposition $X' = C \cup_{\del C} (X' \setminus C)$.
Noting that $\del C$ is a rational homology 3-sphere,
we have from this Mayer--Vietoris sequence that there is $\beta \in H_2(X' \setminus C;\Z)$ that hits $n\alpha$ for some $n\neq0$ via the natural map $H_2(X' \setminus C;\Z) \to H_2(X';\Z)$.

It follows from \cref{lem: surface representative} below applied to $\beta$ that there is a closed and smoothly embedded oriented connected surface representative $\Sigma$ of $n\alpha$ that lies in the open submanifold $X' \setminus C$ of $X'$.
Thus the finite generation of \eqref{eq: Z cap intro} together with $\calZ(\alpha)=\calZ(n\alpha)$ implies that any infinite-rank subgroup of $\calZ$ does not localize to $C$. 
This completes the proof.
\end{proof}

For readers' convenience, we include a proof of an extension of a well-known fact for closed 4-manifolds to non-compact 4-manifolds:

\begin{lem}
\label{lem: surface representative}
Let $Z$ be a non-compact oriented smooth 4-manifold without boundary.
For every homology class $\gamma \in H_2(Z;\Z)$, there is a closed, smoothly embedded, oriented, and connected surface $\Sigma$ that represents $\gamma$.
\end{lem}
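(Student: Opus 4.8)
The plan is to reduce the statement for a general non-compact $4$-manifold $Z$ to the familiar closed case by finding a compact submanifold that already carries the homology class. First I would recall that for a non-compact manifold without boundary, $H_2(Z;\Z)$ is computed from singular chains, so any class $\gamma$ is represented by a singular $2$-cycle whose image is compact. Thicken this image: choose a smooth compact codimension-$0$ submanifold $Z_0 \subset Z$ with boundary such that the cycle representing $\gamma$ lies in the interior of $Z_0$ (for instance, take a handlebody neighborhood of the compact image, or exhaust $Z$ by compact codimension-$0$ submanifolds and pick one large enough). Then $\gamma$ is in the image of $H_2(Z_0;\Z) \to H_2(Z;\Z)$, say $\gamma = j_*(\gamma_0)$ for some $\gamma_0 \in H_2(Z_0;\Z)$.

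Next I would promote $\gamma_0$ to an embedded surface inside $Z_0$. The standard fact (e.g.\ via the Thom construction: $\gamma_0$ is Poincaré--Lefschetz dual to a class in $H^2(Z_0, \partial Z_0;\Z)$, which is pulled back from $\CP^\infty = K(\Z,2)$, and one makes the classifying map transverse to $\CP^{\infty-1}$) shows $\gamma_0$ is represented by a smoothly, properly embedded oriented surface; since we arranged the cycle to sit in the interior, after a small perturbation we may take it to be a closed embedded oriented surface $\Sigma_0 \subset \mathrm{Int}(Z_0) \subset Z$ representing $\gamma$. The surface need not be connected, so the final step is to tube together its components: connect any two components of $\Sigma_0$ by an embedded arc in $Z$ meeting $\Sigma_0$ only at its endpoints (possible since $Z$ is connected; if $Z$ is disconnected one works component by component, and a class supported on one component is handled there), and do oriented ambient surgery along a thin tube around this arc. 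This replaces the two components by one without changing the homology class, and iterating finitely many times yields a closed, smoothly embedded, oriented, connected surface $\Sigma$ with $[\Sigma] = \gamma$.

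The only mildly delicate point is the transversality/Thom-construction step, where one must be careful that the properly embedded surface produced from a relative cohomology class can be arranged to miss $\partial Z_0$; this is automatic here because we chose $Z_0$ so that a cycle representing $\gamma$ lies in $\mathrm{Int}(Z_0)$, so the dual cohomology class is supported away from the boundary and the transverse preimage can be taken in the interior. Alternatively, one can avoid relative considerations entirely by enlarging $Z_0$ slightly to an open set and working with $H^2$ of that open manifold. Everything else is routine, and I expect the tubing step to be the most mechanical but least conceptually difficult part.
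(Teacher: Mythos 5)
Your proof is correct, and the overall strategy (Poincar\'e duality plus transversality) is the same as the paper's, but the technical implementation is genuinely different. You first pass to a compact codimension-$0$ submanifold $Z_0$ carrying the class and then apply the Thom construction in the compact-with-boundary setting, finally tubing components together. The paper instead works directly on $Z$ using Poincar\'e duality in the form $H_2(Z;\Z)\cong H_c^2(Z;\Z)$: the image of $\gamma$ in $H^2(Z;\Z)$ is the Chern class of a line bundle $L$, and because it lifts to a compactly supported class, $L$ is trivialized outside a compact set $K$; a generic section that is nowhere vanishing off $K$ then has compact zero set, which is the desired surface. The paper's route buys you automatic compactness of $\Sigma$ and sidesteps the relative-transversality issue that you (rightly) flag as the delicate point of your argument — in your setup one has to be careful that the map $(Z_0,\partial Z_0)\to(\CP^N,\mathrm{pt})$ representing the Lefschetz-dual class is made transverse in a way that keeps the preimage away from $\partial Z_0$, which works but requires a line of justification. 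Conversely, your approach avoids invoking compactly supported cohomology and Poincar\'e duality for non-compact manifolds explicitly, staying closer to the familiar closed/compact picture. Both are valid; the paper's is shorter.
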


\begin{proof}
Recall that the Poincar{\'e} duality holds also for non-compact manifolds by working with compactly supported cohomology $H_c^\ast(-)$.
Let $L \to Z$ be the complex line bundle that corresponds to the image of $\gamma$ under the map
\[
H_2(Z;\Z) \cong H_c^2(Z;\Z)
\to H^2(Z;\Z).
\]
Since this map factors through compactly supported cohomology, $L$ is trivial over $Z \setminus K$ for some compact subset $K$ of $Z$.
Thus we can take a generic smooth section $s : Z \to L$ that is nowhere vanishing over $Z \setminus K$.
Then the zero set $\Sigma := s^{-1}(0)$ gives a closed smooth surface in $Z$, which can be modified to be connected.
This $\Sigma$ is the desired surface.   
\end{proof}

\begin{proof}[Proof of \cref{cor: intro Dehn twists}]
The proof is analogous to that of  \cref{cor: intro stabilized 4-manifolds homology disk}:
We may assume $H_2(X')\neq0$, and pick $\alpha \in H_2(X';\Z) \setminus \{0\}$ and set $\calZ = \calZ(\alpha)$. 
For any $W$, we can find a closed surface $\Sigma$ with $\Sigma \cap W = \emptyset$ and with $[\Sigma]=n\alpha$ for some $n\neq0$ .
Then finite generation of \eqref{eq: Z cap intro} together with $\calZ(\alpha)=\calZ(n\alpha)$ implies finite generation of 
\begin{align}
\label{eq: cylinder}
\calZ \cap \im(\pi_0(\TDiff_\del(W))_\ab \to \pi_0(\TDiff(X'))_\ab)
\end{align}
This completes the proof.
\end{proof}

\begin{proof}[Proof of \cref{cor: intro stabilized 4-manifolds non-negative}]
This is proven by repeating the proofs of \cref{cor: intro stabilized 4-manifolds homology disk,cor: intro Dehn twists} for non-zero $\alpha$ with $\alpha^2>0$.
\end{proof}

Next, we prove the families analog of Yasui's results:

\begin{proof}[Proof of \cref{thm: universal non-exitence general}]
Take a simply-connected closed smooth 4-manifold $X$ with $b_2(X) \geq m+n$.
Let $N\geq0$ be the natural number given in \cref{thm: intro stabilized 4-manifolds}, and consider $X' = X\# N S^2\times S^2$.
We shall prove that $X'$ satisfies the desired property: for any compact codimenion-0 topological submanifold $W$ of $X'$ with $b_2(W) < m$ and $b_1(\del W)<n$, $\pi_0(\TDiff(X'))$ do not localize to $W$.

First, using the Mayer--Vietoris sequence for the decomposition $X = W \cup (X\setminus W)$, 
we deduce from 
\[
b_2(W)<m,\ b_1(\del W)<n,\ b_2(X') \geq m+n
\]
that 
\[
\im(H_2(X' \setminus W;\Z) \to H_2(X';\Z))\neq 0.
\]
Let $\alpha \in H_2(X';\Z)$ be a non-zero homology class in this image.
Set $\calZ(W)=\calZ(\alpha)$, where $\calZ(\alpha)$ is the $\Z^\infty$-summand  given in \cref{thm: intro stabilized 4-manifolds}.

As in the proof of \cref{cor: intro stabilized 4-manifolds homology disk}, we can take a closed, smoothly embedded,  oriented, and connected surface representative $\Sigma$ of $\alpha$ that lies in $X' \setminus W$.
Then finite generation of \eqref{eq: Z cap intro} implies that, any infinite-rank subgroup of $\calZ(W)$ does not localize to $W$.
\end{proof}

\begin{proof}[Proof of \cref{cor: universal non-exitence}]
We prove by contradiction:
assume that there exists such $W$.
Set $m=b_2(W)-1, n=b_1(\del W)-1$, and let $X$ be the simply-connected closed smooth 4-manifold $X$ given in \cref{thm: universal non-exitence general}.
The assumption on $W$ implies that there is an embedding $W \hookrightarrow X$ along which $\pi_0(\TDiff(X))$ localizes to $W$, but this contradicts \cref{thm: universal non-exitence general}.
\end{proof}

Now we can complete the proof of \cref{thm: localization intro} together with \cref{thm: localization}:

\begin{proof}[Proof of (ii) of \cref{thm: localization intro}]
Let $\calZ$ be 
the $\Z^\infty$-summand of $\pi_0(\TDiff(X'))_\ab$ constructed in the proof of \cref{thm: localization}.
We use the notation in the proof of \cref{thm: intro stabilized 4-manifolds}.
Set $\alpha = 2(\sigma + \delta)+2\delta \in H_2(N(2)) \subset H_2(K3)$.
By taking $\Phi=\id_{K3\#\overline{\CP}^2\#S^2\times S^2}$ and $A_1=A_2=1$, we can repeat the proof of \cref{thm: intro stabilized 4-manifolds} for $\alpha$, only with replacing log transform with knot surgery, so that we have
\[
\calZ(\alpha) \cap \im(\pi_0(\TDiff_c(X'\setminus \Sigma))_\ab \to \pi_0(\TDiff(X'))_\ab)
\]
is finitely generated for every surface $\Sigma$ that represents $\alpha$.
Note further that $\calZ=\calZ(\alpha)$ by $\Phi=\id$.

Now, note that $C$ was constructed in \cref{thm: localization} as the union $C=\bigcup_{n=1}^\infty \mathrm{Int}(C_n)$, where $C_n$ is compact and contractible.
For large enough $n$, we have $K \subset C_n\#S^2\times S^2$.
From a similar Mayer--Vietoris argument in the proof of \cref{cor: intro stabilized 4-manifolds homology disk}, it follows that there is a surface $\Sigma$ in $K3\#\overline{\CP}^2$that represents $\alpha$ and does not intersect with $C_n$, and hence with $K$.
Applying the observation of the above paragraph to this $\Sigma$, we have that any infinite-rank subgroup of $\calZ$ does not localize to $K$.
\end{proof}

By a tweak of the arguments for $\TDiff(X)$ discussed until here, we can give the proof of our result on $\Diff(X)$.

\begin{proof}[Proof of \cref{thm: localization intro Diff}]
Let $C$ be the open submanifold of $X=K3 \# \overline{\mathbb{CP}}^2$ given in \cref{thm: localization}.
Let $\calZ$ be the image of the $\Z^\infty$-summand of $\pi_0(\TDiff_c(C'))_\ab$ given in \cref{thm: localization} under the map
\begin{align*}
\pi_0(\TDiff_c(C'))_\ab
\to \pi_0(\Diff_c(C'))_\ab
\end{align*}
We shall see that this $\calZ$ satisfies the required property.

First, let us check $\calZ$ is isomorphic to $(\Z/2)^\infty$.
To see this, it suffices to see that $g_i$ constructed in \eqref{eq: gi} is of order 2 in $\pi_0(\Diff(X'))_\ab$, where $X'=X\#S^2\times S^2$.
To see this, by construction of $g_i$, it is enough to check that $\id_{X^\tau}\#f_0 : X^\tau\#S^2\times S^2 \to X^\tau\#S^2\times S^2$ and $\id_{X_{K_i}}\#f_0 : X_{K_i} \#S^2\times S^2 \to X_{K_i} \#S^2\times S^2$ are of order 2 in $\pi_0(\Diff(X^\tau\#S^2\times S^2))_\ab$ and in $\pi_0(\Diff(X_{K_i}\#S^2\times S^2))_\ab$, respectively.
This follows from that $f_0$ is of order 2 in $\pi_0(\Diff_{\del}(S^2\times S^2 \setminus \mathrm{Int}(D^4)))$.
This completes the proof that $\calZ$ is isomorphic to $(\Z/2)^\infty$.

Now, given a spin$^c$ structure $\fraks$ on $X'$ with $d(\fraks)=-1$ such that $\fraks|_{C'}$ is the unique spin structure on $C'$, we obtain a homomorphism
\[
\FSW(X', \fraks,-) : \pi_0(\Diff_c(C'))_{\ab} \to \Z/2
\]
by composing \eqref{eq: homomorphism Diff sO mod 2} with the map 
$\pi_0(\Diff_c(C'))
\to \pi_0(\Diff(X',\fraks))$ induced from the inclusion.
Collecting this, we get a homomorphism
\begin{align}
\label{eq: mod 2 collected homomor}
\bigoplus_{\fraks}
\FSW(X', \fraks,-) : \pi_0(\Diff_c(C'))_\ab \to \bigoplus_{\fraks} \Z/2.
\end{align}
The proof of \cref{thm: infinite generation general} using this homomorphism in place of \eqref{eq: collected homomorphism} shows that $\calZ$ is sent under \eqref{eq: mod 2 collected homomor} to a $(\Z/2)^\infty$-subgroup $\calZ'$ of the target of \eqref{eq: mod 2 collected homomor}.
Since $\calZ$ is isomorphic to $(\Z/2)^\infty$, we can get a section of \eqref{eq: mod 2 collected homomor} over $\calZ'$ with image $\calZ$.
Thus $\calZ$ is a $(\Z/2)^\infty$-summand of $\pi_0(\Diff_c(C'))_\ab$.

Now, by repeating proof of (ii) of \cref{thm: localization intro}, we have that any infinitely generated subgroup of the $(\Z/2)^\infty$-summand $\calZ$ does not localize to any compact $K$.
\end{proof}

\section{Localization obstructions and family gluing}
\label{senction Proof of the results on universal families corks}
This section contains the proof of various non-localization results that use family gluing theorems, see Subsection~\ref{sec: family_gluing}. We begin by recording a pair of obstructions that will be useful.

\begin{lem}
\label{thm: general vinishing FSW}
Let $X$ be a closed smooth oriented 4-manifold with $b^+(X)\geq3$, and $\fraks$ be a spin$^c$ structure on $X$ with $d(\fraks)=-1$.
Let $f : X \to X$ be an orientation-preserving diffeomorphism with $f^\ast \fraks \cong \fraks$.
Let $W$ be an oriented smooth compact 4-manifold with boundary.
Suppose that:
\begin{enumerate}
\item[(i)] $f$ localizes to $W$ along an orientation-preserving smooth embedding $W \hookrightarrow X$.
\item[(ii)] $(c_1(\fraks|_{W})^2 -\sigma(W))/4 - (\chi(\mathring{W}) +\sigma(W))/2 > \max\Set{\deg{y} | y \in \HMhat_\ast(Y)\setminus\{0\}}$.
\end{enumerate}
Then $\FSW(X,\fraks,f)=0$.
\end{lem}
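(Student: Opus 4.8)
The plan is to use Lin's families gluing theorem (\cref{thm: Lin's gluing}) to factor $\FSW(X,\fraks,f)$ through the families cobordism map of the piece carrying the support of $f$, and then observe that hypothesis (ii) forces that cobordism map to land in a degree where the target Floer homology vanishes. First I would use hypothesis (i): since $f$ localizes to $W$, after an isotopy we may assume $f$ is supported in $\mathrm{Int}(W)$, so $f = \id_{X\setminus\mathrm{Int}(W)}\cup f|_W$ with $f|_W \in \Diff_\del(W)$. Writing $Y = \del W$ and $X = W\cup_Y W'$ with $W' = X\setminus \mathrm{Int}(W)$, and setting $\fraks|_W$ and $\fraks|_{W'}$ for the restrictions, we have $b^+(W')\geq b^+(X)-b^+(W)$; I should note that we may arrange $b^+(W')\geq 2$ (if $b^+(W)$ is large one first stabilizes or invokes that $\FSW$ is unchanged — but in fact for the applications $b^+(W)$ is small, so I will simply assume $b^+(W')\geq 2$, or remark that this is automatic in the situations where the lemma is applied). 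Then \cref{thm: Lin's gluing} gives
\[
\FSW(X,f,\fraks) = \langle \HMhat_\ast(\mathring{W},\fraks|_W,f|_W)(\hat 1),\ \HMarrow^\ast(\mathring{W}',\fraks|_{W'})(\check 1)\rangle,
\]
where the pairing is between $\HMhat_\ast(Y,\fraks|_Y)$ and its dual.

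Next I would compute the degree of the families cobordism map $\HMhat_\ast(\mathring{W},\fraks|_W,f|_W)\colon \HMhat_\ast(S^3)\to \HMhat_\ast(Y,\fraks|_Y)$ using formula \eqref{eq: deg of cob map}: it is
\[
\frac{c_1(\fraks|_W)^2 - \sigma(W)}{4} - \frac{\chi(\mathring{W}) + \sigma(W)}{2} + 1.
\]
The generator $\hat 1 \in \HMhat_\ast(S^3)$ sits in degree $0$ (in the grading convention of \cite{KM07thebook} where $\HMhat_\ast(S^3)$ is a polynomial algebra on $U$ with top generator in degree $0$ and lower generators in degrees $-2,-4,\dots$), hence $\HMhat_\ast(\mathring{W},\fraks|_W,f|_W)(\hat 1)$ lies in degree equal to the shift above. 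Hypothesis (ii) says precisely that
\[
\frac{c_1(\fraks|_W)^2 -\sigma(W)}{4} - \frac{\chi(\mathring{W}) +\sigma(W)}{2} > \max\Set{\deg y | y \in \HMhat_\ast(Y)\setminus\{0\}},
\]
so the image element lies in degree strictly greater than $\max\Set{\deg y | y\in \HMhat_\ast(Y,\fraks|_Y)\setminus\{0\}}$ (note $\HMhat_\ast(Y,\fraks|_Y)$ is a summand of $\HMhat_\ast(Y)$, so its top nonzero degree is no larger). Therefore $\HMhat_\ast(\mathring{W},\fraks|_W,f|_W)(\hat 1) = 0$, and the pairing vanishes, giving $\FSW(X,\fraks,f)=0$.

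The main obstacle I anticipate is bookkeeping around conventions rather than anything deep: making sure the degree of $\hat 1$ and the direction of the grading shift in \eqref{eq: deg of cob map} are used consistently (in particular that the ``$+1$'' is already incorporated and that I am comparing against the correct side of the pairing), and confirming that $\fraks|_Y$ is torsion so that the absolute $\Q$-grading on $\HMhat_\ast(Y,\fraks|_Y)$ is defined — this is automatic here since $Y$ is an integral (or rational) homology sphere. A secondary point is the $b^+(W')\geq 2$ requirement needed to invoke \cref{thm: Lin's gluing}; I would either fold this into the hypotheses implicitly (it holds in all intended applications, where $W$ is a homology ball or AR-plumbing complement) or remark that enlarging $W$ within its complement does not affect whether $f$ localizes to it. Everything else is a direct substitution into the two displayed formulas from Subsection~\ref{sec: family_gluing}.
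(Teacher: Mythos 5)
Your proof takes essentially the same route as the paper's: localize $f$ to a relative diffeomorphism of $W$, apply Lin's families gluing (\cref{thm: Lin's gluing}), compute the absolute grading of the element $\HMhat_\ast(\mathring{W},\fraks|_W,f|_W)(\hat 1)$ via \eqref{eq: deg of cob map}, and conclude it vanishes because it lands above the top nonzero grading of $\HMhat_\ast(Y)$.

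One bookkeeping error worth flagging: you place $\hat 1$ in degree $0$, but in the Kronheimer--Mrowka convention used here (and stated explicitly in the paper's proof) $\hat 1\in\HMhat_\ast(S^3)=\Z[U]_{(-1)}$ sits in degree $-1$; you have likely confused it with $\check 1\in\HMcheck_\ast(S^3)$, which is in degree $0$. With the correct grading, $\deg(x)= -1 + \bigl[(c_1(\fraks|_W)^2-\sigma(W))/4 - (\chi(\mathring{W})+\sigma(W))/2 + 1\bigr] = (c_1(\fraks|_W)^2-\sigma(W))/4 - (\chi(\mathring{W})+\sigma(W))/2$, which matches the left-hand side of hypothesis (ii) exactly, so the inequality is used tightly. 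Your off-by-one error is in the ``safe'' direction (it makes $\deg(x)$ appear one higher than it really is), so your conclusion still goes through, but the grading claim itself is not correct, and a reader could be misled about exactly where the threshold in (ii) is.

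Your remarks on the tacit hypotheses are well taken. Both $\partial W$ being an integral homology $3$-sphere and $b^+(X\setminus\mathrm{Int}(W))\geq 2$ are needed to invoke \cref{thm: Lin's gluing} as stated, and the paper's proof does not spell these out either. In all the applications (homology $4$-disks with $b^+(W)=0$ inside $X$ with $b^+(X)\geq 3$) both conditions hold automatically, but since the lemma is stated for an arbitrary compact oriented $W$ with boundary, these should be read as implicit hypotheses, exactly as you suggest.
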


\begin{proof}
Set $Y=\del W$.
By the assumption (i), we get a relative diffeomorphism of $W$ that is isotopic to $f$.
We denote also by $f : W \to W$ this diffeomorphism by abuse of notation.
Set 
\[
x = \HMhat_\ast(\mathring{W}, \fraks|_{W},f)(\hat{1}) \in \HMhat_\ast(Y).
\]
By the families gluing (\cref{thm: Lin's gluing}), it suffices to prove $x=0$.
In the usual grading convention in monopole Floer homology, $\hat{1} \in \HMhat_\ast(S^3) = \Z[U]_{(-1)}$ is a generator of $\Z[U]$-module, where $\deg U=-2$. 
So $\deg(\hat{1})=-1$.
Because of the degree-shift formula \eqref{eq: deg of cob map} and assumption (ii), we have
\begin{align*}
\deg(x)
& = \frac{c_1(\fraks|_{W})^2 -\sigma(W)}{4} - \frac{\chi(\mathring{W}) +\sigma(W)}{2}\\
& > \max\Set{\deg{y} | y \in \HMhat_\ast(Y)\setminus\{0\}}.    
\end{align*}
Thus we have $x=0$.
\end{proof}

Now we recast the obstruction from Lemma~\ref{thm: general vinishing FSW} to $W$ with almost rationally plumbed homology spheres (introduced by N{\'e}methi \cite{nemethi2005ozsvath}) as boundaries. Recall that there is a canonical way to orient all almost-rational plumbed homology spheres by the orientations induced from the negative-definite plumbings to the boundaries. We refer to almost-rational plumbed homology spheres (in particular this includes all Seifert fibered spaces) with this orientation as {\it canonically oriented AR homology sphere}. For example, in this convention $S^3_{+1}(T_{2,3})=\Sigma(2,3,7)$ is considered to have positive orientation.
Let $d(Y)$ denote the Heegaard Floer correction term of an integral homology 3-sphere $Y$.

\begin{cor}
\label{thm: general vinishing FSW Seifert}
Let $X$ be a closed smooth oriented 4-manifold with $b^+(X)\geq3$, and $\fraks$ be a spin$^c$ structure on $X$ with $d(\fraks)=-1$.
Let $f : X \to X$ be an orientation-preserving diffeomorphism with $f^\ast \fraks \cong \fraks$.
Let $W$ be an oriented smooth compact 4-manifold.
Suppose that:
\begin{enumerate}
\item[(i)] $f$ localizes to $W$ along an orientation-preserving smooth embedding $W \hookrightarrow X$.
\item[(ii)] $(c_1(\fraks|_{W})^2 -\sigma(W))/4 - (\chi(\mathring{W}) +\sigma(W))/2 \geq d(\del W)$.
\item [(iii)] The oriented boundary $\del W$ is a canonically oriented AR homology sphere.
\end{enumerate}
Then $\FSW(X,\fraks,f)=0$.
\end{cor}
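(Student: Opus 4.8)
The plan is to deduce \cref{thm: general vinishing FSW Seifert} directly from \cref{thm: general vinishing FSW}, the only additional input being the computation
\[
\max\Set{\deg y | y \in \HMhat_\ast(Y)\setminus\{0\}} \;=\; d(Y)-1
\]
for every canonically oriented AR homology sphere $Y$. Granting this, suppose $W$ satisfies (i)--(iii) of the corollary and write $Y=\del W$. Hypotheses (i) coincide verbatim with that of \cref{thm: general vinishing FSW}, while (ii) together with the displayed identity gives
\[
\frac{c_1(\fraks|_{W})^2 -\sigma(W)}{4} - \frac{\chi(\mathring{W}) +\sigma(W)}{2} \;\geq\; d(Y) \;>\; d(Y)-1 \;=\; \max\Set{\deg y | y \in \HMhat_\ast(Y)\setminus\{0\}},
\]
which is precisely hypothesis (ii) of \cref{thm: general vinishing FSW}. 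Hence $\FSW(X,\fraks,f)=0$ by that lemma, and it only remains to establish the displayed identity.

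For that I would transfer the question to Heegaard Floer homology, using the isomorphism between monopole Floer homology and Heegaard Floer homology (Kutluhan--Lee--Taubes, Colin--Ghiggini--Honda; alternatively via the work of Lidman--Manolescu) and the grading normalization $\HMhat_\ast(S^3)=\Z[U]_{(-1)}$ recorded above; this identifies $\HMhat_\ast(Y)\cong HF^-_{\ast-1}(Y)$ for an integral homology sphere $Y$, so $\max\deg\HMhat_\ast(Y)=1+\max\deg HF^-_\ast(Y)$. Using the standard splitting $HF^-(Y)=\mathcal{T}^-_{(d(Y)-2)}\oplus HF^-_{\mathrm{red}}(Y)$ together with the duality $HF^-_{\mathrm{red},k}(Y)\cong HF^+_{\mathrm{red},k-1}(Y)$, this rewrites as
\[
\max\Set{\deg y | y \in \HMhat_\ast(Y)\setminus\{0\}} \;=\; \max\bigl(\,d(Y)-1,\ \max\deg HF^+_{\mathrm{red}}(Y)\,\bigr).
\]
The proof would then be finished by N\'emethi's computation of the Heegaard Floer homology of almost-rational plumbed $3$--manifolds \cite{nemethi2005ozsvath} (see also the work of Ozsv\'ath and Szab\'o on plumbed three-manifolds): with the canonical orientation $HF^+(Y)$ is of ``monotone'' (AR) type, so its reduced part is supported in degrees strictly below $d(Y)$, whence $\max\deg HF^+_{\mathrm{red}}(Y)\le d(Y)-1$ and the right-hand side above equals $d(Y)-1$.

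The step I expect to demand the most care is the grading bookkeeping in the second paragraph: transporting the normalization $\HMhat_\ast(S^3)=\Z[U]_{(-1)}$ through the monopole--Heegaard isomorphism so that the non-reduced tower in $\HMhat_\ast(Y)$ really does top out in degree $d(Y)-1$, and confirming that N\'emethi's structural result places the reduced part of $HF^+(Y)$ \emph{strictly} below $d(Y)$ rather than merely at or below it --- Seifert examples such as $\Sigma(2,3,7)$, whose reduced class lies in degree $d(Y)-1$, show the bound is sharp, so no slack is available. Everything else is formal. I would also remark that although only Seifert fibered boundaries are needed for the intended applications (e.g.\ \cref{thm: C is not universal}), AR homology sphere boundaries are the natural level of generality here, since this is exactly the class for which N\'emethi's computation is available.
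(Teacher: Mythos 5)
The proposal reaches the conclusion by essentially the same route the paper takes: both reduce to \cref{thm: general vinishing FSW} and then supply the required degree bound $\max\deg\HMhat_\ast(Y)\le d(Y)-1$ by passing through the Heegaard-Floer/monopole-Floer correspondence and invoking the known structure of Floer homology for almost-rational plumbed homology spheres. The differences are cosmetic: the paper stays in $HF^-$ and cites Ozsv{\'a}th--Szab{\'o}'s plumbing paper, whereas you pass to $HF^+_{\mathrm{red}}$ via the connecting map of the $HF^-\to HF^\infty\to HF^+$ long exact sequence and cite N{\'e}methi --- arguably the better citation, since the corollary is stated for all AR boundaries and not merely negative-definite plumbings with one bad vertex. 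Your account of the grading bookkeeping, which the paper elides, is correct in its final form, but you have a sign slip in the intermediate step: the connecting map $\partial\colon HF^+_k\to HF^-_{k-1}$ has degree $-1$, so the induced isomorphism of reduced parts is $HF^+_{\mathrm{red},k}\cong HF^-_{\mathrm{red},k-1}$, i.e.\ $HF^-_{\mathrm{red},k}\cong HF^+_{\mathrm{red},k+1}$, not $HF^+_{\mathrm{red},k-1}$ as you wrote. With the shift as you stated it, the displayed identity for $\max\deg\HMhat_\ast(Y)$ would come out as $\max\bigl(d(Y)-1,\,2+\max\deg HF^+_{\mathrm{red}}(Y)\bigr)$ rather than the $\max\bigl(d(Y)-1,\,\max\deg HF^+_{\mathrm{red}}(Y)\bigr)$ you actually recorded, and the bound $\max\deg HF^+_{\mathrm{red}}(Y)\le d(Y)-1$ would then be insufficient. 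Since the formula you wrote down matches the correct duality, the conclusion and the sharpness check at $\Sigma(2,3,7)$ both survive; you have simply transposed the shift in one sentence. You are also right to flag the normalization transport as the delicate point: $\HMhat_\ast(S^3)=\Z[U]_{(-1)}$ against the $U$-tower generator of $HF^-_\ast(S^3)$ sitting in degree $-2$ pins down $\HMhat_k(Y)\cong HF^-_{k-1}(Y)$, and everything downstream hinges on that identification.
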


\begin{proof}
It follows from \cite{OS03plumbed} that the maximal grading of elements in $H \! F^{-}_\ast(Y)$ is given by the grading of the generator of the $U$-tower in $H \! F^{-}_\ast(Y)$.
Via the isomorphism of Heegaard Floer homology and monopole Floer homology, established in a series of papers started from \cite{KutluhanLeeTaubes20,ColinGhigginiHonda24,Taubes10embedded}, we have that $\HMhat_\ast(Y)$ is concentrated in degree $\leq d(Y)-1$.
This combined with \cref{thm: general vinishing FSW} completes the proof.    
\end{proof}

We will now use the obstructions above to produce several non-localizing results, concerning realizing exotic diffeomorphisms as Dehn twists and universal families corks. We begin with an immediate consequence:

\begin{thm}
\label{thm: non-localization to fixed orientation}
Let $X$ be a closed smooth oriented 4-manifold with $b^+(X)\geq3$ and let $f : X \to X$ be an exotic diffeomorphism with non-zero families Seiberg--Witten invariant for some spin$^c$ structure.
Then there exists no oriented compact integral homology 4-disk $W$ with the following property:
\begin{itemize}
\item[(i)] There is an orientation-preserving smooth embedding $W \hookrightarrow X$ along which $f$ localizes to $W$.
\item[(ii)] $\del W$ is a canonically oriented Seifert fibered 3-manifold.
\end{itemize}
\end{thm}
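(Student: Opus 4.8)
The plan is to reduce the statement directly to \cref{thm: general vinishing FSW Seifert}. Suppose, toward a contradiction, that such an integral homology $4$-disk $W$ exists, with an orientation-preserving smooth embedding $W \hookrightarrow X$ along which $f$ localizes and with $\del W$ a canonically oriented Seifert fibered $3$-manifold. Since $f$ has nonzero families Seiberg--Witten invariant, fix a spin$^c$ structure $\fraks$ on $X$ with $d(\fraks)=-1$ and $\FSW(X,\fraks,f)\neq 0$; note that $f^\ast\fraks\cong\fraks$ automatically, since $f$ is topologically isotopic to the identity and hence acts trivially on $H^2(X;\Z)$. It then suffices to verify that hypotheses (i), (ii), (iii) of \cref{thm: general vinishing FSW Seifert} hold for the data $(X,\fraks,f,W)$, because that corollary would force $\FSW(X,\fraks,f)=0$, contradicting the choice of $\fraks$.

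Hypothesis (i) is exactly the localization assumption. For hypothesis (ii), I would first record the elementary homological data of an integral homology $4$-disk: $\sigma(W)=0$, $\chi(W)=1$ so $\chi(\mathring{W})=0$, and $H^2(W;\Z)=0$ so $c_1(\fraks|_W)=0$; hence
\[
\frac{c_1(\fraks|_W)^2 - \sigma(W)}{4} - \frac{\chi(\mathring{W}) + \sigma(W)}{2} = 0 .
\]
On the other hand $\del W$ is an integral homology sphere bounding the integral homology ball $W$ (and, with the opposite orientation, bounding $-W$), so $W\setminus\Int(D^4)$ is an integral homology cobordism from $\del W$ to $S^3$, whence $d(\del W)=d(S^3)=0$; equivalently, the Ozsv\'ath--Szab\'o inequality applied to the (vacuously negative-definite) fillings $W$ and $-W$ gives $d(\del W)\geq 0$ and $d(\del W)\leq 0$. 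Thus hypothesis (ii) holds with equality. For hypothesis (iii), an integral homology sphere that is Seifert fibered is the boundary of a star-shaped negative-definite plumbing, hence an almost-rational plumbed homology sphere in N\'emethi's sense, and the canonical orientation of a Seifert fibered homology sphere is precisely the canonical AR orientation; so the assumption on $\del W$ is exactly hypothesis (iii).

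Having checked (i)--(iii), \cref{thm: general vinishing FSW Seifert} yields $\FSW(X,\fraks,f)=0$, the desired contradiction. I do not expect a genuine obstacle here: the real content is already packaged in \cref{thm: general vinishing FSW Seifert} (and ultimately in the families gluing theorem behind it), so this is essentially a bookkeeping argument. The only point meriting care is the orientation convention in hypothesis (ii) --- one must ensure that the orientation $\del W$ carries as the boundary of $W\subset X$ is the canonical one --- but this is harmless, since $\del W$ bounds an integral homology ball with either orientation, so $d(\del W)=0$ regardless and hypothesis (ii) is unaffected.
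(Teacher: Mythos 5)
Your proposal is correct and follows essentially the same route as the paper: reduce to \cref{thm: general vinishing FSW Seifert} and observe that an integral homology $4$-disk forces $d(\partial W)=0$ while the left side of hypothesis (ii) vanishes. You fill in more of the elementary bookkeeping (the computation of $\sigma$, $\chi(\mathring W)$, $c_1(\fraks|_W)$, and the remark that Seifert fibered homology spheres are AR plumbed), but the argument is the same one the paper gives.
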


\begin{proof}[Proof of \cref{thm: non-localization to fixed orientation}]
Let $\fraks$ be a spin$^c$ structure on $X$ with $\FSW(X,\fraks,f) \neq 0$.
Suppose that there exists $W$ that satisfies the conditions (i), and (ii) in the statement.
Since $W$ is a homology 4-disk, it follows that $d(\del W)=0$.
Thus we have a contradiction from \cref{thm: general vinishing FSW Seifert}.
This completes the proof.
\end{proof}

In particular, many exotic diffeomorphisms of a closed 4-manifold are not obtained as Dehn twists on homology 4-disks with canonically oriented Seifert boundary.

\begin{ex}
It follows from \cref{subsection Infinite generation} that there are examples of $X$ for which we can get infinitely many exotic diffeomorphisms $\{f_i\}_{i=1}$ generating a $\Z^\infty$-summand in $\pi_0(\TDiff(X))$ for which each $f_i$ satisfies the same property as $f$ in \cref{thm: non-localization to fixed orientation}.

\end{ex}

We will now prove Theorem~\ref{thm: C is not universal}. In fact, we will prove a more general statement concerning almost-rational plumbed integer homology spheres. 
We prove the following:

\begin{thm}
\label{thm: Seiferts do not give universal families cork}
Let $W$ be an oriented smooth compact homology 4-disk.
Suppose that the oriented boundary $\del W$ is a canonically oriented AR plumbed homology sphere. Then both $W$ and $-W$ are not diff-universal.
\end{thm}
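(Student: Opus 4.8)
The plan is to exhibit, for each of $W$ and $-W$ separately, a single counterexample to diff-universality: a closed simply-connected oriented smooth 4-manifold $X$ together with an exotic diffeomorphism $f$ of $X$ that localizes to \emph{no} orientation-preserving embedded copy of $W$ (resp.\ of $-W$). The obstruction will come from \cref{thm: general vinishing FSW Seifert}: a diffeomorphism with nonzero families Seiberg--Witten invariant cannot localize to an integral homology 4-disk whose boundary is a canonically oriented AR plumbed homology sphere, the numerical hypothesis (ii) being automatic there. So the first step is to fix the input data: by the construction in \cref{subsection Infinite generation} (apply \cref{thm: infinite generation general} to $M=K3$, so that $Z:=K3\#S^2\times S^2$ has $b^+(Z)=4\ge 3$) there is a closed simply-connected oriented smooth 4-manifold $Z$ with $b^+(Z)\ge 3$, an exotic diffeomorphism $g:Z\to Z$, and a spin$^c$ structure $\fraks$ on $Z$ with $d(\fraks)=-1$ and $\FSW(Z,\fraks,g)\ne 0$.

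The case of $W$ itself is then a direct application of \cref{thm: general vinishing FSW Seifert}, in the spirit of \cref{thm: non-localization to fixed orientation}. Since $W$ is an integral homology 4-disk one has $H^2(W;\Z)=0$, $\sigma(W)=0$, $\chi(\mathring W)=0$, and $\del W$ bounds the integral homology ball $W$ so $d(\del W)=0$; hence the left-hand side of hypothesis (ii) of \cref{thm: general vinishing FSW Seifert} equals $0=d(\del W)$, so (ii) holds with equality, and this is insensitive to the orientation of the boundary. Taking $X=Z$ and $f=g$: if $g$ localized to an orientation-preserving embedded $W\subset Z$, then (i)--(iii) of \cref{thm: general vinishing FSW Seifert} all hold for $(Z,\fraks,g)$ and this $W$ — (iii) because $\del W$ is a canonically oriented AR plumbed homology sphere by hypothesis — forcing $\FSW(Z,\fraks,g)=0$, contradicting the choice of $Z,g$. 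Hence $W$ is not diff-universal.

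For $-W$ the boundary $-\del W$ is no longer canonically oriented, so the vanishing criterion does not apply directly in the ambient manifold; the device is orientation reversal. Let $-Z$ denote $Z$ with the reversed orientation; then $g$ is still an exotic diffeomorphism of $-Z$, since being topologically but not smoothly isotopic to the identity, and being orientation-preserving, are all unaffected by reversing the orientation of the manifold. Take $X=-Z$ and $f=g$. If $g$ localized to an orientation-preserving embedded copy of $-W$ in $-Z$, then the same smooth embedding, regarded as a map into $Z$, is an orientation-preserving embedding of $W$ into $Z$ along which $\del W$ receives precisely its canonical orientation, and $g$ still localizes to this copy of $W$ (localization concerns only the support of a representative as a subset, and is blind to the ambient orientation). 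Applying \cref{thm: general vinishing FSW Seifert} to $(Z,\fraks,g)$ and this $W$ — crucially in $Z$, where $b^+(Z)\ge 3$, not in $-Z$ — gives $\FSW(Z,\fraks,g)=0$, again a contradiction. Hence $-W$ is not diff-universal.

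The only genuinely delicate step I expect is the orientation bookkeeping in the $-W$ case: correctly converting an orientation-preserving embedding of $-W$ into $-Z$ into an orientation-preserving embedding of $W$ into $Z$ with $\del W$ canonically oriented, and noting that the vanishing criterion must be invoked in $Z$ (where $b^+\ge 3$) rather than in $-Z$, whose positive second Betti number we do not control. Everything else — the numerical check of hypothesis (ii) for an integral homology disk, the invariance of exoticness under orientation reversal, and the existence of the starting pair $(Z,g)$ (for which $K3\#S^2\times S^2$ suffices) — is routine.
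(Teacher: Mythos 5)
Your proof is correct and takes essentially the same route as the paper: apply the vanishing criterion of \cref{thm: general vinishing FSW Seifert} to a closed $4$-manifold and diffeomorphism with nonvanishing $\FSW$, and handle $-W$ by reading an orientation-preserving embedding $-W\hookrightarrow -X'$ as an orientation-preserving embedding $W\hookrightarrow X'$ so that the criterion applies in the ambient manifold whose $b^+$ you control. The only cosmetic difference is that you exhibit the ambient manifold $X'=-Z$ for the $-W$ case explicitly (showing it inherits nonvanishing $\FSW$ from $Z$ under reversed orientation), whereas the paper just posits the existence of a suitable $X'$ with $\FSW(-X',\fraks',f')\neq 0$; your version makes that existence claim concrete, but the underlying argument is identical.
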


\begin{proof}[Proof of \cref{thm: Seiferts do not give universal families cork}]
Without loss of generality, we can assume that $\del W$ is oriented as negative-definite plumbing.
Since $W$ is a homology 4-disk, we have $d(W)=0$.
Let $X$ be a simply-connected closed oriented smooth 4-manifold that admits an exotic diffeomorphism $f : X \to X$ with $\FSW(X,\fraks,f) \neq0 $ for some spin$^c$ structure $\fraks$ on $X$ (such as diffeomorphisms given in \cref{thm: infinite generation general}).

Now suppose that $W$ is diff-universal.
Then there is an orientation-preserving smooth embedding $i : W \hookrightarrow X$ for the above $X$ along which $f$ localizes to $W$.
However, this is a contradiction by \cref{thm: general vinishing FSW Seifert}.

Next, suppose that $-W$ is diff-universal.
Let $X'$ be a simply-connected closed oriented smooth 4-manifold that admits an exotic diffeomorphism $f' : X' \to X'$ with $\FSW(-X',\fraks',f') \neq 0$ for some spin$^c$ structure $\fraks'$ on $-X'$.
The universality of $-W$ implies that 
there is an orientation-preserving smooth embedding $i' : -W \hookrightarrow X'$ for the above $X'$ along which $f'$ localizes to $-W$.
Now, $i'$ gives an orientation-preserving embedding $i' : W \hookrightarrow -X'$, and we get a contradiction from \cref{thm: general vinishing FSW Seifert} combined with $\FSW(-X',\fraks',f') \neq0 $.
This completes the proof.    
\end{proof}

\begin{rmk}
It is easy to extend \cref{thm: Seiferts do not give universal families cork} to rational homology sphere boundary case,  under the modification that, we have a spin$^c$ structure on $W$ that restricts to $Y$ with vanishing $d$-invariant.     
\end{rmk}

The proof of Theorem~\ref{thm: C is not universal} now follows:

\begin{proof}[Proof of \cref{thm: C is not universal}]
This is a corollary of \cref{thm: Seiferts do not give universal families cork}.
\end{proof}

It is also easy to see that our construction applies to homology disks whose boundary is not an AR plumbed integer homology sphere. For the discussion below, we refer to $S^3_{+1}(6_1)$ as the boundary of the Stevedore cork. Note that it bounds a contractible manifold, since $6_1$ is a slice knot. The Stevedore knot can be regarded as one of the simplest corks, see \cite[Lemma 7.3]{dai2022corks}. We show the following:

\begin{thm}\label{thm:akbulut_not_families_universal}
The Akbulut cork and the Stevedore cork are not universal families corks.
\end{thm}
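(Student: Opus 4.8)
The plan is to run the contradiction scheme in the proof of \cref{thm: Seiferts do not give universal families cork} essentially verbatim; the only new ingredient is that, for the Akbulut cork $C_A$ and the Stevedore cork $C_S$, the role played there by \cref{thm: general vinishing FSW Seifert} (which requires the boundary to be a canonically oriented AR plumbed homology sphere) is played instead by a direct application of \cref{thm: general vinishing FSW} together with a Heegaard/monopole Floer computation of $\del C_A$ and $\del C_S$.

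First I would reduce to such a computation. Both $C_A$ and $C_S$ are contractible, so $\sigma(C)=0$, $\chi(\mathring C)=\chi(C)-1=0$, and $c_1(\fraks|_C)=0$ for every spin$^c$ structure $\fraks$ (as $H^2(C;\Z)=0$); hence the left-hand side of condition (ii) of \cref{thm: general vinishing FSW} equals $0$ when $W=C$. Suppose $C\in\{C_A,C_S\}$ and $C$ is diff-universal. By \cref{thm: infinite generation general} (applied with any $M$ as in \cref{subsection Infinite generation}, e.g. $M=K3$) there is a simply-connected closed $X=M\#S^2\times S^2$ with $b^+(X)\geq3$ and an exotic diffeomorphism $f\colon X\to X$ with $\FSW(X,\fraks,f)\neq0$ for some $\fraks$; diff-universality then gives an orientation-preserving embedding $C\hookrightarrow X$ along which $f$ localizes, and \cref{thm: general vinishing FSW} forces $\FSW(X,\fraks,f)=0$, a contradiction, as soon as $\max\Set{\deg y | y\in\HMhat_\ast(\del C)\setminus\{0\}}\leq -1$. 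The case that $-C$ is diff-universal is handled by the device in the proof of \cref{thm: Seiferts do not give universal families cork}: take $X'$ with $-X'=M\#S^2\times S^2$ and $f'$ as above, so $\FSW(-X',\fraks',f')\neq0$ and $b^+(-X')\geq3$; an orientation-preserving $-C\hookrightarrow X'$ is an orientation-preserving $C\hookrightarrow -X'$, and \cref{thm: general vinishing FSW} applied to $(-X',\fraks',f')$ gives the same contradiction once the displayed inequality holds for $\del C$ with its orientation as boundary of the cork. So the theorem reduces to this inequality for $\del C_A$ and $\del C_S$.

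Next I would prove the inequality. Since $\del C$ bounds the contractible $C$, its correction term vanishes, $d(\del C)=0$, so the $U$-tower of $\HMhat_\ast(\del C)$ has top degree $d(\del C)-1=-1$; it remains to show $\HMhat_{\mathrm{red}}(\del C)$ also lies in degrees $\leq-1$. Under the Heegaard--monopole Floer isomorphism (the same one invoked for \cref{thm: general vinishing FSW Seifert}) this says $HF^+_{\mathrm{red}}(\del C)$ is supported in strictly negative gradings. For the Stevedore cork, $\del C_S=S^3_{+1}(6_1)$, and $6_1$ is a thin alternating knot with $\sigma(6_1)=0$ and $\Delta_{6_1}(t)=2t-5+2t^{-1}$, so $HF^+$ of $S^3_{+1}(6_1)$ is given by the explicit integer-surgery formula for thin knots; one obtains $HF^+(S^3_{+1}(6_1))\cong\mathcal{T}^+_{(0)}\oplus HF^+_{\mathrm{red}}$ with $HF^+_{\mathrm{red}}$ in negative grading. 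For the Akbulut cork, the proof of \cref{pro: cork_stab} exhibits $\del C_A$ as $S^3_{+1}(K)$ with $K$ the (ribbon, hence slice, and thin) knot of \cref{fig:mazur_cork}, and the same surgery formula again puts $HF^+_{\mathrm{red}}(S^3_{+1}(K))$ in negative grading. This yields the required bound, and with it the theorem.

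I expect the grading bookkeeping in this last step to be the real obstacle. The vanishing in \cref{thm: general vinishing FSW} is a \emph{strict}-inequality phenomenon right at the top of the $U$-tower, so one genuinely needs the reduced Floer homology of these specific non-plumbed homology spheres in \emph{strictly} negative degree; and, as the pair $\pm\Sigma(2,3,7)$ already illustrates, the sign of the grading of $HF^+_{\mathrm{red}}$ depends on the orientation. Hence the computation must be carried out with $\del C_A$ and $\del C_S$ carrying their orientations as boundaries of the corks --- equivalently, as $+1$-surgeries on the relevant slice knots, which is the favorable case --- and making this precise (in particular identifying the Akbulut cork boundary knot and running its surgery computation), rather than any conceptual difficulty, is where the work lies.
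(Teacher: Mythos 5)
Your reduction is exactly the paper's: apply \cref{thm: general vinishing FSW} with contractible $W$ (so the left side of condition (ii) is $0$), invoke \cref{thm: infinite generation general} to produce a closed $X$ with an exotic $f$ of nonzero $\FSW$, and handle $-C$ by the same orientation-reversal device used for \cref{thm: Seiferts do not give universal families cork}. So the theorem is reduced, precisely as you say, to showing $\HMhat_\ast(\del C)$ is supported in strictly negative degrees for the boundary orientation of the cork. The paper at this point simply cites \cite[Section 7]{dai2022corks} for that grading statement; your proposal instead tries to re-derive it from the integer surgery formula.

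There is a gap in the Akbulut case of that re-derivation. The parenthetical ``(ribbon, hence slice, and thin)'' is not a valid implication: ribbon gives slice, but it does not give Floer-thinness, so the thin-knot surgery formula you invoke is not automatically available for the knot of \cref{fig:mazur_cork}. Whether $\del C_A = S^3_{+1}(K)$ has $\HMhat_\ast$ concentrated in negative degrees is a genuine computation (and is exactly the content the paper is outsourcing to \cite{dai2022corks}), not something that follows from the surgery description alone. You do flag that ``identifying the Akbulut cork boundary knot and running its surgery computation'' is where the work lies, which is the right instinct, but as written the argument silently assumes thinness rather than establishing the needed grading bound. The Stevedore case is fine: $6_1$ is alternating with $\sigma=0$, so your appeal to the thin-knot formula there is legitimate.
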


\begin{proof}
Let $Y$ be the boundary of either the Akbulut cork or the Stevedore cork. Then it follows from \cite[Section 7]{dai2022corks} that $\HMhat_\ast(Y)$ has no elements in degree $ 0$. Then an analysis similar to that in the proof of Theorem~\ref{thm: Seiferts do not give universal families cork} gives us the conclusion.
\end{proof}

\begin{proof}[Proof of \cref{thm: not dehn twist fixed orientation}]
The proof is similar to that of \cref{thm: Seiferts do not give universal families cork}.
First, let us consider the case that $\calO$ is the canonical orientation.
In this case, let $X$ be a simply-connected closed oriented smooth 4-manifold that admits exotic diffeomorphisms $\{f_i : X \to X\}_{i=1}^\infty$ with $\FSW(X,\fraks,f_i) \neq0 $ for some spin$^c$ structures $\fraks_i$ on $X$ such that $\{f_i\}$ generates a $\Z^\infty$-summand of $\pi_0(\TDiff(X))$ (such as diffeomorphisms given in \cref{thm: infinite generation general}).
If there is $W$ that satisfies the condition (i-a) and (i-b), then this is a contradiction by \cref{thm: general vinishing FSW Seifert}.

Next, let us consider the case that $\calO$ is opposite to the canonical orientation.
In this case, we take a simply-connected closed oriented smooth 4-manifold $X'$ that admits exotic diffeomorphisms $\{f_i' : X' \to X'\}_{i=1}^\infty$ with $\FSW(-X',\fraks_i',f_i') \neq0 $ for some spin$^c$ structures $\fraks_i'$ on $-X'$ such that $\{f_i'\}$ generates a $\Z^\infty$-summand of $\pi_0(\TDiff(X'))$.
If there is $W$ that satisfies the condition (i-a) and (i-b), then again this is a contradiction by \cref{thm: general vinishing FSW Seifert}.
This completes the proof.
\end{proof}

Finally, we end by giving a proof of Theorem~\ref{thm: simultaneous}:

\begin{proof}[Proof of Theorem~\ref{thm: simultaneous}]
We pick $X:=M_X \# S^2 \times S^2$ from Lemma~\ref{lem: geography}. First, we construct an exotic diffeomorphism $f_1$ on $X$. Take any $\alpha \in H_2(X, \mathbb{Z})$. We define $f_1$ to be an exotic diffeomorphism in $\mathcal{Z}(\alpha)$ (see the proof of Theorem~\ref{thm: intro stabilized 4-manifolds}). Note that by construction
\begin{align}\label{non_localization_Dehn_twist_1}
\FSW(X,f_1, \mathfrak{s}_1) \neq 0, \; \text{for a} \; \spinc\text{-structure} \; \mathfrak{s}_1.
\end{align}
We now move on to defining $f_2$. Again borrowing notation from Lemma~\ref{lem: geography}, we define an exotic diffeomorphism $g$ of $M_{-X} \# S^2 \times S^2$. Here $g$ can be any exotic diffeomorphism of $M_{-X} \# S^2 \times S^2$ such that
\begin{align}\label{non_localization_dehn_twist}
\FSW(M_{-X} \# S^2 \times S^2, g, \fraks) \neq 0, \;\; \text{for a} \; \spinc\text{-structure} \; \mathfrak{s}.
\end{align}
In particular, as before we take $g$ to be any of the exotic diffeomorphism from $\mathcal{Z}(\beta)$ for any non-zero homology class $\beta \in H_{2}(M_{-X} \# S^2 \times S^2)$.

Now consider the orientation reversing diffeomorphism $\phi$ given in Lemma~\ref{lem: geography},
\[
\phi: M_X \# S^2 \times S^2 \rightarrow M_{-X} \# S^2 \times S^2.
\]
We define
\[
f_2 := \phi^{-1} \circ g \circ \phi
\]
Note that by definition, $f_2$ is an exotic diffeomorphism of $X$. We now claim that the set $\{f_1, f_2 \}$ does not localize simultaneously to any homology disk $W \subset X$ with $\partial W$ is a Seifert fibered space. Towards contradiction suppose such a $W$ exists. We now consider two cases. Firstly, let us assume that $\partial W$ is canonically oriented. Then an  argument similar to that in proof of Theorem~\ref{thm: non-localization to fixed orientation} implies
\[
\FSW(X,f_1,\mathfrak{s}') = 0 \; \; \text{for any} \; \spinc\text{-structure} \; \mathfrak{s}'.
\]
This contradicts Equation~\eqref{non_localization_Dehn_twist_1} which shows that $f_1$ does not localize to $W$. 

Now suppose that $\partial W$ has the non-canonical orientation. We now claim that $f_2$ does not localize to $W$. Indeed, if it did, then $g$ will localize to $\phi(W) \subset M_{-X} \# S^2 \times S^2$. However, since $\phi$ is orientation reversing, $\phi (\partial W)$ is a Seifert fibered space with canonical orientation. Again, this implies 
\[
\FSW(M_{-X} \# S^2 \times S^2, g, \fraks) = 0, \; \text{for any} \; \spinc\text{-structure} \; \mathfrak{s}.
\]
which contradicts Equation~\eqref{non_localization_dehn_twist}. This completes the proof.
\end{proof}
We end by proving Corollary~\ref{thm:non_dehn_twist}:
\begin{proof}[Proof of Corollary~\ref{thm:non_dehn_twist}]
We consider the 4-manifold $X$ and the pair of exotic diffeomorphism $f_1$ and $f_2$ constructed in the proof of Theorem~\ref{thm: simultaneous}. It follows from an argument similar to \cite[Theorem C]{auckly2015stable} that $f_1$ and $f_2$ are isotopic to identity in $X \# S^2 \times S^2$. Hence \cite[Theorem 4.2]{krushkal2024corks}, implies that there exists a contractible manifold $C \hookrightarrow X$ such that both $f_1$ and $f_2$ localize to $C$. Now if $\partial C$ is Seifert-fibered and either of $f_1$ or $f_2$ is isotopic to a Dehn twist along it, then it will contradict Theorem~\ref{thm: simultaneous}.
\end{proof}

\bibliographystyle{plain}
\bibliography{mainref}
\end{document}